\newcolumntype{F}{>{$}c<{\hspace{-0.9ex}$}}
\newcolumntype{:}{>{$}m{0.8ex}<{$}}
\newcolumntype{R}{>{$}r<{$}}
\newcolumntype{C}{>{$}c<{$}}
\newcolumntype{L}{>{$}l<{$}}
\newcolumntype{N}{@{}>{$}l<{$}}
\newlength\horspace
\newcommand{\h}[1][1.0]{\hspace{#1\horspace}}
\newlength\verspace
\renewcommand{\v}[1][1.0]{\vspace{#1\verspace}\xspace}
\newlength\negverspace
\newcommand\semiHuge{\@setfontsize\semiHuge{22.72}{27.38}}
\newcommand\semiLarge{\@setfontsize\semiLarge{12}{14}}
\tikzset{iso/.style={draw=none,every to/.append style={edge node={node [sloped, allow upside down, auto=false]{$\cong$}}}}}
\tikzset{simeq/.style={draw=none,every to/.append style={edge node={node [sloped, allow upside down, auto=false]{$\simeq$}}}}}
\tikzset{simeqS/.style={draw=none,every to/.append style={edge node={node [sloped, allow upside down, auto=false]{$\raisebox{0.8em}{$\simeq$}$}}}}}
\tikzset{aiso/.style={simeqS,preaction={draw,->}}}
\tikzset{dotdot/.style={dash pattern=on 0.25ex off 0.2ex, dash phase=0ex}}
\tikzset{RightA/.style={double distance=3.5pt,>={Implies},->},%
	triple/.style={-,preaction={draw,RightA}},%
	quadruple/.style={preaction={draw,RightA,shorten >=0pt},shorten >=1pt,-,double,double distance=0.2pt}}
\newtheorem{teor}{Theorem}[section]
\newtheorem{coroll}[teor]{Corollary}
\newtheorem{prop}[teor]{Proposition}
\theoremstyle{definition}
\newtheorem{defne}[teor]{Definition}
\newtheorem{rem}[teor]{Remark}
\newtheorem{exampl}[teor]{Example}
\newtheorem{rec}[teor]{Recall}
\newtheorem{cons}[teor]{Construction}
\def\nameit#1{\textrm{#1}~}
\def\defx{\nameit{Definition}}
\def\thex{\nameit{Theorem}}
\def\prox{\nameit{Proposition}}
\def\conx{\nameit{Construction}}
\def\remx{\nameit{Remark}}
\def\corx{\nameit{Corollary}}
\def\exax{\nameit{Example}}
\NewDocumentEnvironment{cd}{s O{7} O{7} b}{%
	\IfBooleanF{#1}{\begin{equation*}}\begin{tikzcd}[row sep=#2ex,column sep=#3ex,ampersand replacement=\&]
			#4
		\end{tikzcd}\IfBooleanF{#1}{\end{equation*}}\ignorespacesafterend}{}
\newenvironment{enum}{\begin{enumerate}[label=$($\hspace{0.12ex}\roman*\hspace{0.075ex}$)$]}{\end{enumerate}}
\newenvironment{fun}{\[\begin{tabular}{F:RCL}}{\end{tabular}\]\ignorespacesafterend}
\newenvironment{eqD}[1]{\begin{equation}\label{#1}}{\end{equation}\ignorespacesafterend}
\newenvironment{eqD*}{\begin{equation*}}{\end{equation*}\ignorespacesafterend}
\def\:{\colon}
\providecommand\ordinarycolon{:}
\def\vcentcolon{\mathrel{\mathop\ordinarycolon}}
\newcommand{\deq}{\mathrel{\vcentcolon\mkern-1.2mu=}}
\def\phi{\varphi}
\newcommand{\romanuppercase}[1]{\uppercase\expandafter{\romannumeral #1\relax}}
\newcommand{\p}[1]{\big(\mkern1mu{#1}\mkern1mu\big)}
\newcommand{\pteor}[1]{$($\h[-1.9]{#1}\h[0.6]$)$}
\def\dfn#1{{\itshape #1}}
\def\predfn#1{{\itshape #1}}
\newcommand{\refs}[1]{$($\ref{#1}$)$}
\newcommand{\scaleu}[2][1.2]{{\scalebox{#1}{$#2$}}}
\def\t#1{\widetilde{#1}} 
\def\bl{_{\bullet,\operatorname{lax}}}
\def\c{\circ}
\newcommand{\st}{^{\ast}}
\newcommand{\sliceslant}[2]{{\raisebox{.1em}{$#1$}\mkern-1mu\left/\raisebox{-.25em}{$#2$}\right.}}
\newcommand{\cosliceslant}[2]{{\raisebox{.1em}{$#1$}\mkern-1mu\left/\raisebox{-.14em}{$#2$}\right.}}
\newcommand{\uwidehat}[1]{%
	\mathpalette\douwidehat{#1}%
}
\newcommand{\douwidehat}[2]{%
	\sbox0{$\m@th#1\widehat{\hphantom{#2}}$}%
	\sbox2{$\m@th#1x$}
	\sbox4{$\m@th#1#2$}
	\dimen0=\ht0
	\advance\dimen0 -.8\ht2
	\dimen2=\dp4
	\rlap{%
		\raisebox{\dimexpr\dimen0-\dimen2}{%
			\scalebox{1}[-1]{\box0}}}{#2}}
\newcommand{\hats}[1]{\widehat{\uwidehat{#1}}}
\DeclareFontFamily{OT1}{pzc}{}
\DeclareFontShape{OT1}{pzc}{m}{it}{<->s*[1.21]pzcmi7t}{}
\DeclareMathAlphabet{\mathpzc}{OT1}{pzc}{m}{it}
\DeclareFontFamily{U}{dutchcal}{\skewchar\font=45}
\DeclareFontShape{U}{dutchcal}{m}{n}{<->s*[1.05] dutchcal-r}{}
\DeclareMathAlphabet{\mathlcal}{U}{dutchcal}{m}{n}
\newcommand{\catfont}[1]{\ensuremath{\mathpzc{#1}}\xspace}
\newcommand{\A}{\catfont{A}}
\newcommand{\B}{\catfont{B}}
\newcommand{\C}{\catfont{C}}
\newcommand{\D}{\catfont{D}}
\newcommand{\E}{\catfont{E}}
\newcommand{\K}{\catfont{K}}
\newcommand{\M}{\catfont{M}}
\newcommand{\Q}{\catfont{Q}}
\newcommand{\V}{\catfont{V}}
\newcommand{\U}{\catfont{U}}
\newcommand{\1}{\catfont{1}}
\newcommand{\2}{\catfont{2}}
\newcommand{\Set}{\catfont{Set}}
\newcommand{\Cat}{\catfont{Cat}}
\newcommand{\CAT}{\catfont{Cat}}
\newcommand{\twoCAT}{2\h[2]\mbox{-}\Cat}
\newcommand{\twoCATlax}{2\h[2]\mbox{-}\Cat_{\h[-1]\lax}}
\newcommand{\VCAT}[1]{\ensuremath{{#1}\mbox{-}\Cat}\xspace}
\newcommand{\twoVCAT}[1]{\ensuremath{2\h[2]\mbox{-}{#1}\mbox{-}\Cat}}
\NewDocumentCommand{\Fib}{t' t" O{n} O{n} o}{
	\ensuremath{\catfont{\ifx#3d{D}\else{\ifx#3o{Op}\else{\ifx#3b{DOp}\else{\ifx#3t{D2Op}\else{\ifx#3c{Cl\h[-3]}\else{\ifx#3s{Sp}\fi}\fi}\fi}\fi}\fi}\fi{Fib}}\IfBooleanTF{#1}{_{\h[0.4]\opn{cart}\ifx#4n{}\else{\h[-1.4],\h[0.4]{#4}}\fi}}{\IfBooleanTF{#2}{_{\h[0.4]\opn{clov}\ifx#4n{}\else{\h[-1.4],\h[0.4]{#4}}\fi}}{\ifx#4n{}\else{_{\h[0.4]{#4}}}\fi}}\IfNoValueF{#5}{\h[-1]\left({#5}\right)}}
}
\NewDocumentCommand{\Sh}{o m}{
	\ensuremath{\catfont{Sh}\hspace{-0.15ex}\left({#2}\IfNoValueF{#1}{,{#1}}\right)}
}
\NewDocumentCommand{\St}{t+ o m}{
	\ensuremath{\catfont{St}\IfBooleanT{#1}{_{\opn{strict}}}\hspace{-0.15ex}\left({#3}\IfNoValueF{#2}{,{#2}}\right)}
}
\NewDocumentCommand{\Alg}{t+ t' m}{
	\ensuremath{\IfBooleanT{#1}{\catfont{Ps}\mbox{-}}{#3}\mbox{-}\catfont{\IfBooleanT{#2}{Co}Alg}}
}
\newcommand{\Lan}[1]{\operatorname{Lan}_{\hspace{0.2ex}#1}}
\newcommand{\lan}[1]{\operatorname{lan}_{\hspace{0.2ex}#1}}
\newcommand{\slice}[2]{\sliceslant{#1}{#2}}
\newcommand{\coslice}[2]{\cosliceslant{#1}{#2}}
\newcommand{\sslash}{\mathbin{/\mkern-6mu/}}
\newcommand{\laxcomma}[2]{{#1}\mkern-1mu\sslash\mkern-1mu{#2}}
\newcommand{\HomC}[3]{{#1}\left({#2},\h[1]{#3}\right)}
\newcommand{\m}[2]{\ensuremath{\left[#1,#2\right]}\xspace}
\newcommand{\mps}[2]{\ensuremath{\left[#1,#2\right]_{\opn{ps}}}\xspace}
\newcommand{\msigma}[2]{\ensuremath{\left[#1,#2\right]_{\opn{sigma}}}\xspace}
\newcommand{\mlax}[2]{\ensuremath{\left[#1,#2\right]_{\lax}}\xspace}
\newcommand{\mlaxn}[2]{\ensuremath{\left[#1,#2\right]_{\laxn}}\xspace}
\newcommand{\moplaxn}[2]{\ensuremath{\left[#1,#2\right]_{\oplaxn}}\xspace}
\newcommand{\opn}[1]{\operatorname{#1}}
\newcommand{\y}[1]{\ensuremath{\operatorname{y}\hspace{-0.2ex}\left({#1}\right)}}
\DeclareMathOperator{\yy}{y}
\newcommand{\id}[1]{\operatorname{id}_{#1}}
\newcommand{\Id}[1]{\operatorname{Id}_{#1}}
\newcommand{\op}{\ensuremath{^{\operatorname{op}}}}
\newcommand{\co}{\ensuremath{^{\operatorname{co}}}}
\newcommand{\coop}{\ensuremath{^{\operatorname{coop}}}}
\newcommand{\ARoplax}[1]{{#1}^{\2}_{\oplax}}
\newcommand{\restr}[2]{{\left.\kern-\nulldelimiterspace {#1}\vphantom{\big|} \right|_{#2}}}
\newcommand{\ceil}[1]{\lceil #1 \rceil}
\newcommand{\dom}{\operatorname{dom}}
\newcommand{\cod}{\operatorname{cod}}
\DeclareMathOperator{\colim}{colim}
\DeclareMathOperator{\lax}{lax}
\DeclareMathOperator{\oplax}{oplax}
\DeclareMathOperator{\laxn}{lax^{cart}}
\DeclareMathOperator{\oplaxn}{oplax^{cart}}
\newcommand{\oplaxnm}[1]{\oplax_{#1}^{\operatorname{cart}}}
\DeclareMathOperator{\opln}{opl^{c}}
\newcommand{\oplaxnopm}[1]{\oplax_{\operatorname{op}\mbox{-}{#1}}^{\operatorname{cart}}}
\newcommand{\wlim}[2]{{\lim}^{#1}\h{#2}}
\newcommand{\wcolim}[2]{{\colim}^{#1}\h{#2}}
\newcommand{\laxnlim}[2]{\laxn\mbox{-}\h[1.5]\wlim{#1}{#2}}
\newcommand{\laxncolim}[2]{\laxn\mbox{-}\h[1.5]\wcolim{#1}{#2}}
\newcommand{\oplaxncolim}[1]{\oplaxn\mbox{-}\h[1.5]\wcolim{\Delta 1}{#1}}
\newcommand{\oplaxnmcolim}[3]{\oplaxnm{#1}\mbox{-}\h[1.5]\wcolim{#2}{#3}}
\newcommand{\oplaxnopmcolim}[3]{{\oplaxnopm{#1}}\mbox{-}\h[1.5]\wcolim{#2}{#3}}
\newcommand{\Int}[1]{\ensuremath{\int \hspace{-0.35ex} #1}}
\newcommand{\Intdiag}[1]{\ensuremath{\scaleu{\int} \hspace{-0.15ex} #1}}
\newcommand{\Intop}[1]{\ensuremath{\int\op \hspace{-0.35ex} #1}}
\newcommand{\Intco}[1]{\ensuremath{\int\co \hspace{-0.35ex} #1}}
\newcommand{\Intcoop}[1]{\ensuremath{\int\coop \hspace{-0.35ex} #1}}
\newcommand{\Intopdiag}[1]{\ensuremath{\scaleu{\int}\op \hspace{-0.15ex} #1}}
\newcommand{\Groth}[1]{\Int{#1}}
\newcommand{\Grothdiag}[1]{\Intdiag{#1}}
\newcommand{\Grothop}[1]{\Intop{#1}}
\newcommand{\Grothco}[1]{\Intco{#1}}
\newcommand{\Grothcoop}[1]{\Intcoop{#1}}
\newcommand{\Grothopdiag}[1]{\Intopdiag{#1}}
\newcommand{\groth}[1]{\mathcal{G}\mkern-1.4mu\left(#1\right)}
\newcommand{\Wlaxn}{\operatorname{W}^{\laxn}}
\newcommand{\Woplaxn}{\operatorname{W}^{\oplaxn}}
\newcommand{\too}{\longrightarrow}
\newcommand{\mto}{\mapsto}
\newcommand{\mtoo}{\longmapsto}
\newcommand{\ar}[2][]{\xrightarrow[#1]{#2}}
\def\xlongrightarrowfill@{\arrowfill@\relbar\relbar\longrightarrow}
\newcommand{\arr}[2][]{%
	\ext@arrow 0099\xlongrightarrowfill@{#1}{#2}}
\newcommand{\aarr}[2][]{%
	\ext@arrow 0099\xlongrightarrowfill@{#1}{#2}} 
\newcommand{\aR}[2][]{%
	\ext@arrow 0055{\Rightarrowfill@}{#1}{#2}}
\def\xLongrightarrowfill@{\arrowfill@\Relbar\Relbar\Longrightarrow}
\newcommand{\aRR}[2][]{%
	\ext@arrow 0099\xLongrightarrowfill@{#1}{#2}}
\def\aitofill@{\arrowfill@{\lhook\joinrel\relbar}\relbar\rightarrow}
\newcommand{\aito}[2][]{%
	\ext@arrow 3095\aitofill@{#1}{#2}}
\def\Longaitofill@{\arrowfill@{\lhook\joinrel\relbar\joinrel\relbar}\relbar\rightarrow}
\newcommand{\aitoo}[2][]{%
	\ext@arrow 0099\Longaitofill@{#1}{#2}}
\newcommand{\al}[2][]{\xleftarrow[#1]{#2}}
\def\xlongleftarrowfill@{\arrowfill@\longleftarrow\relbar\relbar}
\newcommand{\all}[2][]{%
	\ext@arrow 0099\xlongleftarrowfill@{#1}{#2}}
\newcommand{\aL}[2][]{%
	\ext@arrow 0055{\Leftarrowfill@}{#1}{#2}}
\def\xLongleftarrowfill@{\arrowfill@\Longleftarrow\Relbar\Relbar}
\newcommand{\aLL}[2][]{%
	\ext@arrow 0099\xLongleftarrowfill@{#1}{#2}}
\def\xmapstofill@{\arrowfill@{\mapstochar\relbar}\relbar\rightarrow}
\newcommand{\am}[2][]{%
	\ext@arrow 0395\xmapstofill@{#1}{#2}}
\def\xlongmapstofill@{\arrowfill@\relbar\relbar\longmapsto}
\newcommand{\amm}[2][]{%
	\ext@arrow 0399\xlongmapstofill@{#1}{#2}}
\newcommand{\eqq}{\DOTSB\protect\Relbar\protect\joinrel\Relbar}
\def\xeqqfill@{\arrowfill@\Relbar\Relbar\eqq}
\newcommand{\aeqq}[2][]{%
	\ext@arrow 0099\xeqqfill@{#1}{#2}}
\def\xRrightarrowfill@{\arrowfill@\equiv\equiv\Rrightarrow}
\newcommand{\aM}[2][]{\ext@arrow 0359\xRrightarrowfill@{#1}{#2}}
\newcommand{\Llongrightarrow}{%
	\DOTSB\protect\equiv\protect\joinrel\Rrightarrow}
\def\xLlongrightarrowfill@{\arrowfill@\equiv\equiv\Llongrightarrow}
\newcommand{\aMM}[2][]{%
	\ext@arrow 0099\xLlongrightarrowfill@{#1}{#2}}
\newcommand{\alax}[1]{\aR[\lax]{#1}}
\newcommand{\alaxn}[1]{\aR[\laxn]{#1}}
\newcommand{\aoplaxn}[1]{\aR[\oplaxn]{#1}}
\newcommand{\iso}{\cong}
\newcommand{\aequi}{\ensuremath{\stackrel{\raisebox{-1ex}{\kern-.3ex$\scriptstyle\sim$}}{\rightarrow}}}
\newcommand{\aequii}{\ensuremath{\stackrel{\raisebox{-1ex}{\kern-.3ex$\scriptstyle\sim$}}{\longrightarrow}}}
\newcommand{\PB}[1]{\arrow[#1,phantom,"\scalebox{1.6}{\color{black}$\lrcorner$}",very near start]}
\newcommand{\Ar}[4][]{\arrow[#2,"{#3}"{#1},""{name=#4, anchor=center}]}
\newcommand{\Ars}[4][]{\arrow[#2,"{#3}"'{#1},""{name=#4, anchor=center}]}
\newcommand{\Arb}[6][]{\arrow[#2,"{#3}"{#1},from=#4,to=#5,shorten <= #6 em, shorten >= #6 em]}
\newcommand{\Arbs}[6][]{\arrow[#2,"{#3}"'{#1},from=#4,to=#5,shorten <= #6 em, shorten >= #6 em]}
\NewDocumentCommand{\fib}{O{n} O{2.3} mmm}{%
	\begin{cd}*[#2][5]
		{#3}\ifx#1n{\arrow[d,"{\,\scaleu{#4}}"]}\else{\ifx#1i{\arrow[d,hookrightarrow,"{\,\scaleu{#4}}"]}\else{\ifx#1e{\arrow[d,equal,"{\,\scaleu{#4}}"]}\else{\ifx#1R{\arrow[d,Rightarrow,"{\,\scaleu{#4}}"]}\fi}\fi}\fi}\fi\\
		{#5}\ifx#1o{\arrow[u,"{\,\scaleu{#4}}"']}\fi
	\end{cd}\xspace
}
\NewDocumentCommand{\sq}{s O{n} O{7} O{7} O{} O{2.7} O{2.2} O{0.5} O{n}}{%
	\def\foosq##1##2##3##4##5##6##7##8{%
		\IfBooleanTF{#1}{\begin{cd}*}{\begin{cd}}[#3][#4]
				{##1}\ifx#2p{\PB{rd}}\fi\arrow[r,"{##5}"]\ifx#9l{\arrow[d,equal,"{##6}"']}\else{\arrow[d,"{##6}"']}\fi\&{##2}\ifx#9r{\arrow[d,equal,"{##7}"]}\else{\arrow[d,"{##7}"]}\fi\ifx#2l{\arrow[ld,Rightarrow,shorten <=#6ex,shorten >=#7ex,"{#5}"{pos=#8}]}\fi\\
				{##3}\ifx#9d{\arrow[r,equal,"{##8}"']}\else{\arrow[r,"{##8}"']}\fi\ifx#2o{\arrow[ur,Rightarrow,shorten <=#6ex,shorten >=#7ex,"{#5}"{pos=#8}]}\fi\&{##4}
		\end{cd}}%
		\foosq}
\NewDocumentCommand{\sqs}{s O{n} O{7} O{7} O{} O{} O{} O{}}{%
	\def\foosqs##1##2##3##4##5##6##7##8{%
		\IfBooleanTF{#1}{\begin{cd}*}{\begin{cd}}[#3][#4]
				{##1}\ifx#2p{\PB{rd}}\fi\arrow[r,"{##5}"#5]\arrow[d,"{##6}"'#6]\&{##2}\arrow[d,"{##7}"#7]\\
				{##3}\arrow[r,"{##8}"'#8]\&{##4}
		\end{cd}}%
		\foosqs}
\NewDocumentCommand{\nat}{s O{n} O{7} O{7} O{2.7} O{2.2} O{0.5} O{n}}{%
	\def\foonat##1##2##3##4##5##6{%
		\IfBooleanTF{#1}{\sq*}{\sq}[#2][#3][#4][{##1}_{##4}][#5][#6][#7][#8]{{##2}\ifx#8l{}\else{({##5})}\fi}{{##3}\ifx#8r{}\else{({##5})}\fi}{{##2}\ifx#8l{}\else{({##6})}\fi}{{##3}\ifx#8r{}\else{({##6})}\fi}{{##1}_{##5}}{\ifx#8l{}\else{{##2}({##4})}\fi}{\ifx#8r{}\else{{##3}({##4})}\fi}{{##1}_{##6}}}%
	\foonat}
\NewDocumentCommand{\tr}{s O{4.5} O{6.5} O{0} O{0} o}{%
	\def\footr##1##2##3##4##5##6{%
		\IfBooleanTF{#1}{\begin{cd}*}{\begin{cd}}[#3][#2]
				{##1}\arrow[rr,"{##4}"]\IfNoValueTF{#6}{}{\arrow[rr,iso,shift right=#6em]}\arrow[dr,"{##5}"']\&[#4ex]\&[#5ex]{##2}\arrow[ld,"{##6}"]\\
				\&{##3}
		\end{cd}}%
		\footr}
\NewDocumentCommand{\trslice}{s t+ O{7} O{7}}{%
	\def\footrslice##1##2##3##4##5##6{%
		\IfBooleanTF{#1}{\begin{cd}*}{\begin{cd}}[#3][#4]
				{##1}\arrow[r,"{##4}"]\IfBooleanF{#2}{\arrow[d,"{##5}"']}\&{##2}\\
				{##3}\IfBooleanT{#2}{\arrow[u,"{##5}"]}\arrow[ru,"{##6}"']
		\end{cd}}%
		\footrslice}
\NewDocumentCommand{\tc}{s t+ O{7} O{30} O{} O{} O{} o}{
	\def\footc##1##2##3##4##5{%
		\FPmul\Mulresulttwo{#3}{#3}%
		\FPmul\Mulresult{0.0026}{\Mulresulttwo}%
		\IfBooleanTF{#1}{\begin{cd}*}{\begin{cd}}[#3][#3]
				{##1}\Ar[#5]{r,bend left=#4}{##3}{A}\Ars[#6]{r,bend right=#4}{##4}{B}\&{##2}
				\IfBooleanTF{#2}{\Arb[description,pos=0.49]}{\Arb}{Rightarrow #7}{\mkern1mu {##5}}{A}{B}{\IfNoValueTF{#8}{\Mulresult}{#8}}
		\end{cd}}%
		\footc}
\NewDocumentCommand{\tcwl}{s t+ O{7} O{30} O{} O{} O{} o O{-2}}{
	\def\footcwl##1##2##3##4##5##6##7{%
		\FPmul\Mulresulttwo{#3}{#3}%
		\FPmul\Mulresult{0.0026}{\Mulresulttwo}%
		\IfBooleanTF{#1}{\begin{cd}*}{\begin{cd}}[#3][#3]
				##6 \arrow[r,"{##7}"]\&[#9ex]{##1}\Ar[#5]{r,bend left=#4}{##3}{A}\Ars[#6]{r,bend right=#4}{##4}{B}\&{##2}\IfBooleanTF{#2}{\Arb[description,pos=0.49]}{\Arb}{Rightarrow #7}{\mkern1mu {##5}}{A}{B}{\IfNoValueTF{#8}{\Mulresult}{#8}}
		\end{cd}}%
		\footcwl}
\NewDocumentCommand{\tcwr}{s t+ O{7} O{30} O{} O{} O{} o O{-2}}{
	\def\footcwr##1##2##3##4##5##6##7{%
		\FPmul\Mulresulttwo{#3}{#3}%
		\FPmul\Mulresult{0.0026}{\Mulresulttwo}%
		\IfBooleanTF{#1}{\begin{cd}*}{\begin{cd}}[#3][#3]
				{##1}\Ar[#5]{r,bend left=#4}{##3}{A}\Ars[#6]{r,bend right=#4}{##4}{B}\&{##2}\arrow[r,"{##7}"]\&[#9ex]##6\IfBooleanTF{#2}{\Arb[description,pos=0.49]}{\Arb}{Rightarrow #7}{\mkern1mu {##5}}{A}{B}{\IfNoValueTF{#8}{\Mulresult}{#8}}
		\end{cd}}%
		\footcwr}
\NewDocumentCommand{\tcv}{s t' O{7} O{30} mmmmm}{
	\FPmul\Mulresulttwo{#3}{#3}%
	\FPmul\Mulresult{0.0026}{\Mulresulttwo}%
	\IfBooleanTF{#1}{\begin{cd}*}{\begin{cd}}[#3][#3]
			{#5}\IfBooleanTF{#2}{\Ars{d,leftarrow,bend right=#4}{#7}{A}\Ar{d,leftarrow,bend left=#4}{#8}{B}}{\Ars{d,bend right=#4}{#7}{A}\Ar{d,bend left=#4}{#8}{B}}\\{#6}
			\Arb{Rightarrow}{#9}{A}{B}{\Mulresult}
		\end{cd}}
\NewDocumentCommand{\twonats}{s O{2.2} O{8} O{7} O{1.05} O{3.45} O{2}}{%
	\def\footwonats##1##2##3##4##5##6##7##8##9{%
		\def\foofootwonats####1####2####3####4####5{%
			\IfBooleanTF{#1}{\begin{cd}*}{\begin{cd}}[#3][#4]
					##1 \Ar{r}{##9}{} \Ars{d,bend right=40}{##5}{A} \Ar{d,bend left=40}{##6}{B} \&
					##2 \Ars{d,bend left}{##8}{Q} \arrow[ld,Rightarrow,shift left=#7,"{####4}"{pos=0.48},shorten <=#5ex, shorten >=#6ex]\&[-2ex]
					##1 \Ar{r}{##9}{} \Ar{d,bend right}{##5}{R} \&
					##2 \Ars{d,bend right=40}{##7}{C} \Ar{d,bend left=40}{##8}{D} \arrow[ld,Rightarrow,shift right=#7,"{####5}"'{pos=0.52},shorten <=#6ex, shorten >=#5ex] \\
					##3 \Ars{r}{####1}{} \&
					##4 \&
					##3 \Ars{r}{####1}{} \& 
					##4
					\Arbs{Rightarrow}{\,{####2}}{B}{A}{0.3}
					\Arbs{Rightarrow}{\,{####3}}{D}{C}{0.3}
					\Arb{equal}{}{Q}{R}{#2}
			\end{cd}}%
			\foofootwonats}\footwonats}
\begin{document}

\title[Pointwise Kan extensions along 2-fibrations and 2-category of elements]{Pointwise Kan extensions along 2-fibrations\\ and the 2-category of elements}
\author[L. Mesiti]{Luca Mesiti}
\address{School of Mathematics, University of Leeds}
\email{mmlme@leeds.ac.uk}
\keywords{Grothendieck construction, Kan extension, 2-fibrations, lax comma, parametrized Yoneda lemma, 2-categories}
\subjclass[2020]{18D30, 18A40, 18A30, 18A25, 18N10}

\begin{abstract}
	We study the 2-category of elements from an abstract point of view. We generalize to dimension 2 the well-known result that the category of elements can be captured by a comma object that also exhibits a pointwise left Kan extension. For this, we propose an original definition of pointwise Kan extension along a discrete 2-opfibration in the lax 3-category of 2-categories, 2-functors, lax natural transformations and modifications. Such definition uses cartesian-marked lax limits, which are an alternative to weighted 2-limits. We show that a pointwise Kan extension along a discrete 2-opfibration is always a weak one as well. The proof is based on an original generalization of the parametrized Yoneda lemma which is as lax as it can be.
\end{abstract}

\maketitle

\setcounter{tocdepth}{1}
\tableofcontents

\section{Introduction}

In this paper we study the $2$-category of elements from an abstract point of view. This is the $2$-dimensional generalization of the construction of the category of elements, and it has been introduced by Street in~\cite{street_limitsindexedbycatvalued}. It is at the same time a natural extension of the usual Grothendieck construction that admits $2$-functors from a $2$-category $\B$ into $\CAT$, and a restriction of the $2$-dimensional Grothendieck construction of Bakovi\'c~\cite{bakovic_fibrofbicat} and Buckley~\cite{buckley_fibredtwocatandbicat} to $2$-functors into $\twoCAT$ that factor through $\CAT$. Analogously, the corresponding notion of opfibration, introduced by Lambert in~\cite{lambert_discretetwofib} with the name discrete $2$-opfibration, is at the same time a natural extension of the usual Grothendieck opfibrations and a locally discrete version of Hermida's~\cite{hermida_somepropoffibasafibredtwocat} $2$-fibrations. Lambert proved in~\cite{lambert_discretetwofib} that discrete $2$-opfibrations with small fibres form the essential image of the $2$-functor that calculates the $2$-category of elements. We extend this result to $2$-equivalences between $2$-copresheaves and discrete $2$-opfibrations (\thex\ref{twofullyfaithfulnessgroth}).

Capturing the $2$-category of elements from an abstract point of view will be very useful for generalizations of the Grothendieck construction. In future work, we would like to achieve the generalization to the enriched context. While Vasilakopoulou defined in~\cite{vasilakopoulou_enrichedfibrations} a notion of fibration enriched over a monoidal fibration, it seems no one has yet proposed a good notion of fibration for a $\V$-enriched functor, with $\V$ a nice enough monoidal category. There would be many applications of such enriched fibrations. Some examples would be additive fibrations, graded fibrations, metric fibrations and general quantale-enriched fibrations. We expect this paper to be useful towards such a theory. As giving an explicit definition of enriched fibration is quite hard, we would like to capture Grothendieck fibrations and the Grothendieck construction from an abstract point of view and try to generalize such abstract theory.

Another motivation that we have in mind is to understand how the various properties of the $2$-category of elements are connected with each other. We will show in Section~\ref{sectiontwosetgrothconstr} that our pointwise Kan extension result for the $2$-category of elements (see below) implies many other properties. Among these, the conicalization of weighted $2$-limits and the $2$-fully faithfulness of the $2$-functor that calculates the $2$-category of elements.

In dimension $1$, it is known that the category of elements can be captured in a more abstract way. Given a copresheaf $F\:\B\to \Set$, the construction of the category of elements of $F$ is equivalently given by the comma object 
\begin{eqD*}
	\sq*[l][5][5][\opn{comma}][2.7][2.2][0.6]{\Grothop{F}}{\1}{\B}{\Set}{}{\groth{F}}{1}{F}
\end{eqD*}
Moreover, this filled square exhibits $F$ as the pointwise left Kan extension of the constant at $\1$ functor $\Delta 1$ along the discrete opfibration $\groth{F}$.

Our main theorem (\thex\ref{teorpointkanextforgroth}, after \thex\ref{grothconstrislaxcomma}) is a $2$-dimensional generalization of this result. We prove that an analogous square as the above one exhibits, at the same time, the $2$-category of elements $\groth{F}$ as a lax comma object in $\twoCATlax$ and $F$ as the pointwise left Kan extension in $\twoCATlax$ of $\Delta 1$ along the discrete $2$-opfibration $\groth{F}$. We find the need to consider the lax $3$-category $\twoCATlax$ of $2$-categories, $2$-functors, lax natural transformations and modifications, because the analogue of the square above is now only filled by a lax natural transformation. Lax $3$-categories, introduced by Lambert in~\cite{lambert_discretetwofib}, are categories enriched over the cartesian closed category of $2$-categories and lax functors. As $\twoCATlax$ has not yet been studied much, we have to propose a novel definition of pointwise Kan extension in $\twoCATlax$ (\defx\ref{defpointkanextensionlaxthreecat}) to achieve our objective. We then prove that pointwise Kan extensions in $\twoCATlax$ along a discrete $2$-opfibration are always weak ones as well (\prox\ref{pointkanisalsoweak}). The proof is based on an original generalization of the parametrized Yoneda lemma which is as lax as it can be (\thex\ref{parametrizedyonedaoplaxnlax}). 

Pointwise Kan extensions are actively researched. While it is relatively easy to give notions of weak Kan extension in a categorical framework, it is much harder to give the corresponding pointwise notions. In~\cite{street_fibandyonlemma}, Street proposes to look at the stability of a Kan extension under pasting with comma objects to obtain a definition of pointwise Kan extension in any $2$-category. However, applying Street's definition to the $2$-category $\VCAT{\V}$ does not give the right notion. For enriched $\V$-functors, the correct notion, that uses $\V$-limits, has been introduced by Dubuc in~\cite{dubuc_kanextensionsinenrichedcat} and later used by Kelly in~\cite{kelly_basicconceptsofenriched}. However, we needed pointwise Kan extensions in the lax $3$-category $\twoCATlax$, that we view as $\twoVCAT{\Set}$, with an original idea of $2$-$\V$-enrichment. Pseudo-Kan extensions of Lucatelli Nunes's~\cite{lucatellinunes_biadjointtriangles} are a pseudo version of Dubuc's ones, considering weighted bilimits in the place of weighted $2$-limits, and quasi-Kan extensions of Gray's~\cite{gray_quasikanextensions} are a lax version. Instead, we needed a strict version, using some form of strict $2$-limit, but which also takes the $2$-$\V$-enrichment into account.

To give our definition of pointwise Kan extension in $\twoCATlax$, we take advantage of the connection between the $2$-category of elements and cartesian-marked oplax colimits, that we recall in Section~\ref{sectionlaxnormallimits} from an original and more elementary perspective. Cartesian-marked (op)lax conical (co)limits are a particular case of a $2$-dimensional notion of limit introduced by Gray in~\cite{gray_formalcattheory}. As proved by Street in~\cite{street_limitsindexedbycatvalued}, and here as well with a more elementary proof, they are an alternative to weighted $2$-limits. Indeed they are particular weighted $2$-limits and every weighted $2$-limit can be reduced to one of them. But cartesian-marked lax conical limits can be much more useful in some situations, as they allow to consider cones rather than cylinders, up to filling the cones with coherent $2$-cells. Some of these $2$-cells are required to be the identity, whence the adjective ``marked". As the choice of such $2$-cells comes from the cartesian liftings of a $2$-category of elements, we call them ``cartesian". Despite their potential, cartesian-marked lax limits have been almost forgotten, until Descotte, Dubuc and Szyld's paper~\cite{descottedubucszyld_sigmalimandflatpseudofun}, where they use their pseudo version, called by them \predfn{sigma-limits}. Later, in~\cite{szyld_limlifttheortwodimmonad} and~\cite{szyld_liftingpielimitswithstrictproj}, Szyld also considered the strict version that we use here. In our~\cite{mesiti_colimitsintwodimslices}, the reduction of weighted $2$-limits to \predfn{cartesian-marked lax conical} ones allowed us to develop a calculus of colimits in $2$-dimensional slices. In our~\cite{mesiti_twoclassifiersdensegenstacks}, it then allowed us to reduce the study of a $2$-classifier to dense generators and to construct a good $2$-classifier in stacks, generalizing to dimension 2 the fundamental result that a Grothendieck topos is an elementary topos.

This paper has also potential applications to higher dimensional elementary topos theory. Indeed, according to Weber's~\cite{weber_yonfromtwotop}, the filled square drawn above presents $\CAT$ as the archetypal $2$-dimensional elementary topos. The classification process is the category of elements construction. On this line, we believe we should consider $\twoCATlax$ as the archetypal $3$-dimensional elementary topos. Its classifier would be $\1\:\1\to \CAT$ and its classification process would be the $2$-category of elements construction. We inscribe the sequence of elementary $n$-topoi
$$\Set\quad\leadsto\quad \CAT \quad\leadsto\quad \twoCATlax$$
in an original idea of \predfn{$2$-$\V$-enrichment}. This guided our definition of pointwise Kan extension in $\twoCATlax$. And we believe that this observation could be useful also towards an enriched version of the Grothendieck construction. For this, we may also notice that comma objects, that regulate the classification process in $\CAT$, are the archetypal example of exact square in $\CAT$. And the fact that every copresheaf is the pointwise Kan extension of $\Delta 1$ along its category of elements is actually a consequence of having an exact square, together with $\1\:\1\to \Set$ being dense. Moving to $\twoCATlax$, we need to upgrade comma objects to lax comma objects. We give a new, refined universal property of lax comma objects to suit the lax $3$-dimensional ambient of $\twoCATlax$ (\defx\ref{completeunivproplaxcomma}). This improves both the universal properties given by Gray in~\cite{gray_formalcattheory} and by Lambert in~\cite{lambert_discretetwofib}.

To clarify references to an earlier draft of this work from other papers, we should mention that the 2-category of elements had been initially called the $2$-$\Set$-enriched Grothendieck construction and that cartesian-marked lax limits had been initially called lax normal limits.

\subsection*{Outline of the paper}

In Section~\ref{sectionlaxnormallimits}, we recall from an original perspective the explicit $2$-category of elements and the cartesian-marked (op)lax conical (co)limits. We give a new, more elementary proof of the equivalence between weighted $2$-limits and cartesian-marked lax conical ones.

In Section~\ref{sectionkanextensions}, we present an original definition of pointwise Kan extension in $\twoCATlax$ along a discrete $2$-opfibration. We prove that such a pointwise Kan extension is always a weak one as well. The proof is based on an original generalization of the parametrized Yoneda lemma.

In Section~\ref{sectiontwosetgrothconstr}, we generalize to dimension $2$ the fact that the category of elements can be captured by a comma object that also exhibits a pointwise left Kan extension. For this we use our notion of pointwise Kan extension in $\twoCATlax$ and a new refined notion of lax comma object in $\twoCATlax$.

\section{Cartesian-marked (op)lax conical (co)limits}
\label{sectionlaxnormallimits}

In this section we recall from an original perspective the explicit $2$-category of elements construction, introduced by Street in~\cite{street_limitsindexedbycatvalued}, and the cartesian-marked (op)lax conical (co)limits, a particular case of a notion introduced by Gray in~\cite{gray_formalcattheory}. We originally show how the two concepts arise simultaneously by the wish of giving an essential solution to the problem of conicalization of the weighted $2$-limits. This can also be seen as a justification to both the concepts.

We obtain a new, more elementary proof of the fact (firstly proved by Street in~\cite{street_limitsindexedbycatvalued}) that weighted $2$-limits and \predfn{cartesian-marked lax conical limits} give equivalent theories (\thex\ref{redlaxnormalconical} and \thex\ref{laxnormalareweighted}). 

 Cartesian-marked lax conical limits offer huge benefits in many situations, as they have a conical shape, even if with coherent $2$-cells inside the triangles that form the cone. Sometimes, it is much easier to handle such $2$-cells rather than a non-conical shape. See for example our~\cite{mesiti_colimitsintwodimslices} and~\cite{mesiti_twoclassifiersdensegenstacks}, as explained in the Introduction. 

In Section~\ref{sectionkanextensions}, the idea of cartesian-marked (op)lax (co)limit will guide the original definitions of colimit in a \predfn{$2$-$\Set$-category} and of \predfn{pointwise left Kan extension in $\twoCATlax$}.

 \begin{rec}[for example from Kelly's~\cite{kelly_basicconceptsofenriched}]
 	Let $\V$ be a complete and cocomplete symmetric closed monoidal category. Consider $\V$-functors $F\:\A \to \C$ (the diagram) and $W\:\A\to \V$ (the weight), with $\A$ a small $\V$-category. The \dfn{$\V$-limit of $F$ weighted by $W$}, denoted as $\wlim{W}{F}$, is (if it exists) an object $L\in \C$ together with an isomorphism in \V
 	\begin{equation}\label{isoweightlim}
 		\HomC{\C}{U}{L}\iso \HomC{\m{\A}{\V}}{W}{\HomC{\C}{U}{F(-)}}
 	\end{equation}
 	$\V$-natural in $U\in \C\op$, where $\m{\A}{\V}$ is the $\V$-category of $\V$-copresheaves on $\A$ valued in $\V$ enriched over itself. When $\wlim{W}{F}$ exists, the identity on $L$ provides a $\V$-natural transformation $\lambda\:W\aR{}\HomC{\C}{L}{F(-)}$ called the \dfn{universal cylinder}.
 	
 	For the notion of \dfn{weighted $\V$-colimit}, we start from $F\:\A\to\C$ and $W\:\A\op\to\V$. The universal property is then
 	\begin{equation*}
 		\HomC{\C}{\wcolim{W}{F}}{U}\iso \HomC{\m{\A\op}{\V}}{W}{\HomC{\C}{F(-)}{U}}
 	\end{equation*}
 	
	Although the classical constant at $1$ weight $\Delta 1$, called the \dfn{conical weight}, no longer suffices in the general enriched setting, we pay attention to when a weighted limit can be \dfn{reduced to a conical one}. It is well known (see Kelly's~\cite{kelly_basicconceptsofenriched}) that in the $\Set$-enriched setting every weighted limit can be conicalized, using the category of elements. A general strategy of conicalization (used by Kelly in~\cite{kelly_basicconceptsofenriched}) allows to conicalize just the $\V$-colimits $W\iso \wcolim{W}{\yy}$, for $W\:\A\to \V$ an enriched presheaf with $\A$ small and $\yy\:\A\op\to\m{\A}{\V}$ the $\V$-Yoneda embedding. The lemma of continuity of a limit in its weight then allows to deduce that all weighted limits are conicalized. The formula is
	\begin{equation}\label{isolemmacont}
		\wlim{\wcolim{W}{H}}{F} \iso \wlim{W}{\left(\wlim{H(-)}{F}\right)}
	\end{equation}
\end{rec}

\begin{rem}\label{needoflaxcones}
	When $\V=\Cat$, the conicalization of weighted $2$-limits is, strictly speaking, not possible. We would need to encode the universal cocylinder $\mu$ exhibiting $W\iso \wcolim{W}{\yy}$ (given by the Yoneda lemma) in terms of a universal cocone
	$$\t{\mu}\:\Delta 1\aR{}\HomC{\m{\A}{\Cat}}{H(-)}{W}$$
	with $H$ some $2$-functor $\B\to \m{\A}{\Cat}$.  And this is not possible because the components of $\mu$ are functors rather than mere functions. The idea is to admit $2$-cells inside the cocone $\t{\mu}$ in order to encode the extra data. We thus consider lax natural transformations, that have general structure 2-cells inside the naturality squares. But, as we explain in \conx\ref{2SetGrothconstrtoconicalize}, the right notion of relaxed 2-natural transformation to consider will be a marked version of lax natural transformations.
\end{rem}

\begin{cons}[$2$-category of elements]\label{2SetGrothconstrtoconicalize}
	Following \remx\ref{needoflaxcones}, we search for a relaxed notion of 2-natural transformation, which is however stronger than a lax natural transformation, and for a $2$-functor $H\:\B\to \m{\A}{\CAT}$ such that any cocylinder
		$$\phi\:W\aR{}\HomC{\m{\A}{\CAT}}{\y{-}}{U}$$
		with $U\:\A\to \CAT$ can be encoded in terms of a relaxed $2$-natural transformation
		$$\t{\phi}\:\Delta 1 \aR[\opn{relaxed}]{} \HomC{\m{\A}{\CAT}}{H(-)}{U}\:\B\op\to\CAT.$$
		
		In order to deduce that every weighted $2$-limit can be analogously conicalized via the lemma of continuity of a limit in its weight, we need $H$ of the form
		$${\left(\Grothopdiag{W}\right)}\op\arr{{\groth{W}}\op} \A\op\arr{\yy} \m{\A}{\CAT}.$$
		Up to now, $\Grothop{W}$ and $\groth{A}$ are just symbols, but will be found to be the \predfn{$2$-category of elements}, as defined explicitly by Street in~\cite{street_limitsindexedbycatvalued}. The explicit formulas that we find for building $\t{\phi}$ from $\phi$ are stricter analogues of the ones of Descotte, Dubuc and Szyld's~\cite{descottedubucszyld_sigmalimandflatpseudofun} (where they essentially conicalize bilimits). 
	
		For every $A\in \A$ and $X\in W(A)$, we have a morphism $\phi_A(X)\:\y{A}\to U$, and we want to form the cocone $\t{\phi}$ exactly with these morphisms. So we take the objects of $\Grothop{W}$ to be all pairs $(A,X)$ with $A\in \A$ and $X\in W(A)$, and define $\groth{W}(A,X)\deq A$. We then set $\t{\phi}_{(A,X)}\deq \phi_A(X)$.
		
		But $\t{\phi}$ also needs to encode the assignment of every $\phi_A$ on morphisms $\alpha\:X\to X'$ in $W(A)$. Lax naturality of $\t{\phi}$ allows to have, for every $\xi\:(A,X) \to (A',X')$ in $\Grothop{W}$, a $2$-cell
		\begin{cd}[4][6]
			1 \arrow[r,"{\t{\phi}_{(A,X)}}"] \arrow[d,equal]\& \HomC{\m{\A}{\CAT}}{\y{A}}{U}\arrow[d,"{-\c \y{\h\groth{W}\op(\xi)}}"]\arrow[dl,Rightarrow,"{\t{\phi}_\xi}"'{pos=0.43}, shorten <= 2.8ex, shorten >= 4.9ex]\\
			1 \arrow[r,"{\t{\phi}_{(A',X')}}"'] \& \HomC{\m{\A}{\CAT}}{\y{A'}}{U}
		\end{cd} 
		For every $A\in \A$ and $\alpha\:X\to X'$ in $W(A)$, we need a morphism $(A,X)\to (A,X')$ in $\Grothop{W}$ whose image with respect to $\groth{W}$ is $\id{A}$. Wishing to write the action of $\groth{W}$ as a projection on the first component, we call such morphism $(A,X)\to (A,X')$ as $(\id{A},\alpha)$. We set $\t{\phi}_{(\id{A},\alpha)}\deq \phi_A(\alpha)$.
		
		We now encode the $2$-naturality of $\phi$ into the relaxed naturality of $\t{\phi}$. For every $f\:A\to A'$ in $\A$ and $X\in W(A)$, the naturality of $\phi$ expresses the equality
		$$\phi_{A'}(W(f)(X))=\phi_A(X)\c \y{f}.$$
		So, for every $f\:A\to A'$ in $\A$ and $X\in W(A)$, we need a morphism
		$$\underline{f}^X\:(A,X)\to (A',W(f)(X))$$
		in $\Grothop{W}$ such that $\groth{W}\left(\underline{f}^X\right)=f$ and $\t{\phi}_{\underline{f}^X}=\id{}$.
		
		It is natural to take $\underline{\id{A}}^X=(\id{A},\id{X})$ for every $A\in \A$ and $X\in W(A)$ and ask any of such equal morphisms to be the identity on $(A,X)$. We then need to close the union of the two kinds of morphisms $(\id{A},\alpha)$ and $\underline{f}^X$ under composition. For this, we notice that, given $f\:A\to A'$ in $\A$ and $\alpha\:X\to X'$ in $W(A)$, the two morphisms $\underline{f}^{X'}\c (\id{A},\alpha)$ and $(\id{A'},W(f)(\alpha))\c \underline{f}^X$ in $\Grothop{W}$ will have the same associated structure $2$-cell of $\t{\phi}$, by lax naturality of $\t{\phi}$. We then take such two morphisms in $\Grothop{W}$ to be equal, so that we will be able to recover the naturality of $\phi$ (on morphisms) starting from $\t{\phi}$. At this point, every finite composition of morphisms in $\Grothop{W}$ can be reduced to a composite\v[-1]
		$$(A,X)\ar{\underline{f}^X} (A',W(f)(X)) \ar{(\id{A'},\alpha)} (A',X')$$
		for some $f\:A\to A'$ in $\A$ and $\alpha\:W(f)(X)\to X'$ in $W(A')$. We define the morphisms in $\Grothop{W}$ to be all the formal composites $(\id{A'},\alpha)\c \underline{f}^X$, that we call $(f,\alpha)$. And we see that $\underline{f}^X=(f,\id{W(f)(X)})$. Functoriality forces $\groth{W}(f,\alpha)=f$, and lax naturality forces
		$$\t{\phi}_{(f,\alpha)}=\t{\phi}_{(\id{A'},\alpha)\c (f,\id{})}=\phi_{A'}(\alpha)\c \id{}=\phi_{A'}(\alpha).$$
		
		We now want to encode the $2$-dimensional part of the $2$-naturality of $\phi$ into the $2$-dimensional part of the relaxed naturality of $\t{\phi}$. Lax naturality of $\t{\phi}$ allows to have, for every $2$-cell $\Xi\:(f,\alpha)\aR{}(g,\beta)\:(A,X)\to (A',X')$ in $\Grothop{W}$,
		\begin{eqD}{twodimlaxphitilde}
			\scalebox{0.9}{\begin{cd}*[6.5][7]
				1 \arrow[r,"{\t{\phi}_{(A,X)}}"] \arrow[d,equal]\& \HomC{\m{\A}{\CAT}}{\y{A}}{U}\arrow[d,"{-\c \y{f}}"]\arrow[dl,Rightarrow,"{\t{\phi}_{(f,\alpha)}}"{pos=0.445}, shorten <= 2.8ex, shorten >= 4.9ex]\\
				1 \arrow[r,"{\t{\phi}_{(A',X')}}"'] \& \HomC{\m{\A}{\CAT}}{\y{A'}}{U}
			\end{cd} = \quad
			\begin{cd}*[6.5][7]
				1 \arrow[r,"{\t{\phi}_{(A,X)}}"] \arrow[d,equal]\& 	\HomC{\m{\A}{\CAT}}{\y{A}}{U}\arrow[d,bend right,shift right=0ex,"{-\c \y{g}}"'{name=B}]\arrow[d,bend left,shift left = 2ex,"{-\c \y{f}}"{name=A}]\arrow[dl,Rightarrow,shift right=1ex,"{\t{\phi}_{(g,\beta)}}"{pos=0.55}, shorten <= 4.65ex, shorten >= 3.2ex]\\
				1 \arrow[r,"{\t{\phi}_{(A',X')}}"'] \& \HomC{\m{\A}{\CAT}}{\y{A'}}{U}
				\arrow[Rightarrow,from=A,to=B,"{-\ast \y{\h\groth{W}\op(\Xi)}}"'{inner sep=1ex},shorten <=1.55ex, shorten >=1.5ex]
			\end{cd}}
		\end{eqD}
		The $2$-naturality of $\phi$ expresses the following equality, for every $2$-cell $\delta\:f\aR{}g\:A\to A'$ in $\A$ and for every $X\in W(A)$:
		$$\phi_{A'}(W(\delta)_X)=\phi_A(X)\h\y{\delta}\:\phi_{A'}(W(f)(X))\aR{}\phi_{A'}(W(g)(X)).$$
		So, for every $2$-cell $\delta\:f\aR{}g\:A\to A'$ in $\A$ and every $X\in W(A)$, we need a $2$-cell in $\Grothop{W}$, that we call $\underline{\delta}^X$ or just $\delta$, such that
		$$\underline{\delta}^X\:(f,W(\delta)_X)\aR{}(g,\id{})\:(A,X)\to (A',W(g)(X))$$
		and $\groth{W}(\underline{\delta}^X)=\delta$. These $2$-cells are closed under both vertical and horizontal composition, inherited from $\A$, but we have to close them under whiskering with morphisms $(\id{A},\alpha)$. Notice that for every $\delta\:f\aR{}g\:A\to A' $ in $\A$ and every $\alpha\:X\to X'$ in $W(A)$, we have that the axiom of equation~\refs{twodimlaxphitilde} of $\t{\phi}$ on the two whiskerings $\underline{\delta}^{X'}\h(\id{A},\alpha)$ and $(\id{A'},W(g)(\alpha))\h \underline{\delta}^{X}$ in $\Grothop{W}$ is exactly the same. So we ask such two whiskerings in $\Grothop{W}$ to be equal. At this point, every horizontal composition of $2$-cells in $\Grothop{W}$ can be reduced to a whiskering of the form $(\id{},\beta)\h \underline{\delta}^X$ for some $2$-cell $\delta\:f\aR{}g\:A\to A'$ in $\A$, $X\in W(A)$ and $\beta\:W(g)(X)\to X'$ in $W(A')$. We define the $2$-cells in $\Grothop{W}$ to be precisely such formal whiskerings. Equivalently, a $2$-cell $(f,\alpha)\aR{}(g,\beta)\:(A,X)\to (A',X')$ in $\Grothop{W}$ is a $2$-cell $\delta\:f\aR{}g$ in $\A$ such that
		$$\alpha=\beta\c W(\delta)_X.$$
		For this, we will call such $2$-cell just $\delta\:(f,\alpha)\aR{}(g,\beta)$. Compositions are inherited from $\A$. Then $2$-functoriality forces $\groth{W}(\delta)=\delta$.
		
		It is straightforward to show that $\Grothop{W}$ is a $2$-category and that $\groth{W}\:\Grothop{W}\to \A$ is a $2$-functor. Notice that we have also described the right notion of relaxed $2$-natural transformation that $\t{\phi}$ needs to satisfy to encode the $2$-naturality of $\phi$. It is a form of marked lax natural transformation, i.e.\ a lax natural transformation such that certain structure $2$-cells are asked to be identities (\defx\ref{deflaxnormal}).
	\end{cons}

We read from \conx\ref{2SetGrothconstrtoconicalize} the following explicit definition of the \predfn{$2$-category of elements}, that coincides with the one of Street's~\cite{street_limitsindexedbycatvalued}.

\begin{defne}\label{defexpltwosetgroth}
	Let $F\:\B\to\CAT$ be a $2$-functor with $\B$ a $2$-category. The \dfn{$2$-category of elements of $F$} is the $2$-functor $\groth{F}\:\Grothop{F}\to \B$, given by the projection on the first component, with $\Grothop{F}$ such that:
	\begin{description}
		\item[an object of $\Grothop{F}$] is a pair $(B,X)$ with $B\in \B$ and $X\in F(B)$;
		\item[a morphism $(B,X)\to (C,X')$ in $\Grothop{F}$] is a pair $(f,\alpha)$ with $f\:B\to C$ a morphism in $\B$ and $\alpha\:F(f)(X)\to X'$ a morphism in $F(C)$;
		\item[a $2$-cell $(f,\alpha)\aR{}(g,\beta)\:(B,X)\to (C,X')$ in $\Grothop{W}$] is a $2$-cell $\delta\:f\aR{}g$ in $\B$ such that $\alpha=\beta\c F(\delta)_X$;
		\item[the compositions and identities] are as described in \conx\ref{2SetGrothconstrtoconicalize}.
	\end{description}
\end{defne}
	
	\begin{defne}[Lambert~\cite{lambert_discretetwofib}]\label{deftwosetopf}
		Let $\B$ be a $2$-category. A \dfn{discrete $2$-opfibration over $\B$} is a $2$-functor $P\:\E\to \B$ such that
		\begin{enum}
			\item the underlying functor $P_0$ of $P$ is an ordinary Grothendieck opfibration;
			\item for every pair $X,Y\in \E$ the functor $P_{X,Y}\:\HomC{\E}{X}{Y}\to \HomC{\B}{P(X)}{P(Y)}$ is a discrete fibration.
		\end{enum}
		We say that $P$ is \dfn{split} if $P_0$ is so.
	\end{defne}
	
	\begin{teor}[Lambert~\cite{lambert_discretetwofib}] \label{essentialimagegivenbytwosetopf}
		Let $\B$ be a $2$-category. The essential image of the $2$-functor
		$$\groth{-}\:\m{\B}{\CAT}\to \slice{\twoCAT}{\B}$$
		is given by the split discrete $2$-opfibrations with small fibres.
	\end{teor}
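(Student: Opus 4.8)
The plan is to prove the claimed equality of subclasses of $\slice{\twoCAT}{\B}$ by establishing the two inclusions, each via a direct Grothendieck-style argument. For the inclusion of the essential image into the split discrete $2$-opfibrations with small fibres it suffices, since all three conditions of \defx\ref{deftwosetopf} together with being split are clearly preserved by isomorphisms over $\B$, to check them straight from \defx\ref{defexpltwosetgroth} for $\groth{F}\colon\Grothop{F}\to\B$. Its underlying functor is precisely the ordinary Grothendieck construction of $F_0\colon\B_0\to\Cat$, hence a split Grothendieck opfibration, with opcartesian lift of $f\colon B\to C$ at $(B,X)$ given by $(f,\id{F(f)(X)})$; this gives condition (i) and splitness. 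For (ii), the functor $\HomC{\Grothop{F}}{(B,X)}{(C,Y)}\to\HomC{\B}{B}{C}$ is a discrete fibration because a lift of a $2$-cell $\delta\colon f\aR{}g$ with prescribed target $(g,\beta)$ is forced: its source must be the morphism $(f,\beta\c F(\delta)_X)$, and $\delta$ is then a legitimate $2$-cell. Finally the fibre over $B$ is isomorphic to the small category $F(B)$ (its $2$-cells lie over $\id{\id{B}}$, hence are identities), so all fibres are small.

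For the reverse inclusion, let $P\colon\E\to\B$ be a split discrete $2$-opfibration with small fibres. I would first note that each fibre $\E_B\deq P^{-1}(B)$ is an honest small $1$-category: by (ii) its $2$-cells lie over identities of hom-categories, hence are identities. Then I would reconstruct a $2$-functor $F\colon\B\to\CAT$ by putting $F(B)\deq\E_B$; for $f\colon B\to C$, using the chosen splitting and writing $\opn{opcart}(f,X)\colon X\to f_!X$ for the opcartesian lift, by setting $F(f)(X)\deq f_!X$ and defining $F(f)$ on morphisms of $\E_B$ through the opcartesian universal property; and for a $2$-cell $\delta\colon f\aR{}g\colon B\to C$, by defining the component $F(\delta)_X\colon f_!X\to g_!X$ via lifting $\delta$, regarded as a morphism of $\HomC{\B}{B}{C}$, along the discrete fibration $P_{X,g_!X}$ to a $2$-cell with target $\opn{opcart}(g,X)$ and then factoring its source (which lies over $f$) through $\opn{opcart}(f,X)$. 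Equivalently, one recovers the object-and-$1$-cell part of $F$ from the classical split Grothendieck correspondence applied to the underlying opfibration $P_0$, and only uses (ii) to add the action on $2$-cells. That this $F$ is a strict $2$-functor --- functoriality of the $F(f)$, strict compatibility with $1$-cell composition, naturality of each $F(\delta)$, and compatibility of $F$ with vertical and horizontal composition and identities of $2$-cells --- then follows from the uniqueness clauses in the opcartesian and discrete-fibration lifting properties.

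Finally I would compare, defining a $2$-functor $\Phi\colon\Grothop{F}\to\E$ over $\B$ by $\Phi(B,X)\deq X$, $\Phi(f,\alpha)\deq\alpha\c\opn{opcart}(f,X)$ and $\Phi(\delta)\deq\beta\ast\hat\delta_X$, where $\hat\delta_X$ is the lift used above and the defining relation $\alpha=\beta\c F(\delta)_X$ of a $2$-cell of $\Grothop{F}$ is exactly what makes the source of $\beta\ast\hat\delta_X$ equal $\alpha\c\opn{opcart}(f,X)$; and constructing a strict inverse over $\B$ symmetrically, sending $X$ over $B$ to $(B,X)$, a morphism $u$ over $f$ to $(f,\alpha_u)$ with $\alpha_u$ the opcartesian factorization of $u$ through $\opn{opcart}(f,P(X))$, and a $2$-cell $\mu$ over $\delta$ to $\delta$ (the required identity $\alpha_u=\alpha_v\c F(\delta)_X$ being checked via $P_{X,Y}$ being a discrete fibration). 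Mutual inverseness gives $P\cong\groth{F}$ in $\slice{\twoCAT}{\B}$, which in particular places $P$ in the essential image.

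I expect the genuinely non-formal part of the whole argument to be the verification that the assignment $\delta\mapsto F(\delta)$ really upgrades $F$ to a strict $2$-functor --- principally the naturality of each $F(\delta)$ and its compatibility with horizontal composition of $2$-cells --- since this is the point where the opfibration condition (i) and the local discrete-fibration condition (ii) must cooperate; equivalently, it is precisely the step showing that the comparison $\Phi$ is bijective on $2$-cells. Everything else reduces to routine chases of the relevant universal properties.
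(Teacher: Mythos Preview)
The paper does not give its own proof of this theorem: it is stated with attribution to Lambert~\cite{lambert_discretetwofib} and left unproved, the author only remarking afterwards that the result will be extended to a full $2$-equivalence later on. So there is no ``paper's proof'' to compare against; your proposal is a self-contained reconstruction of the standard Grothendieck-style argument, and it is correct. The two directions you outline --- checking directly that $\groth{F}$ satisfies the conditions of \defx\ref{deftwosetopf} with the chosen splitting $(f,\id{})$, and then reconstructing $F$ from a split discrete $2$-opfibration via opcartesian lifts and the local discrete-fibration property --- are exactly what one expects, and your identification of the only genuinely delicate point (that $\delta\mapsto F(\delta)$ is strictly $2$-functorial, requiring the interaction of conditions (i) and (ii)) is accurate.
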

	
		We will extend \thex\ref{essentialimagegivenbytwosetopf} to a complete $2$-equivalence between $\m{\B}{\CAT}$ with various laxness flavours on morphisms and corresponding $2$-categories of discrete $2$-opfibrations in Section~\ref{sectionkanextensions}.
	
	From \conx\ref{2SetGrothconstrtoconicalize} we also obtain the following definition.
	
	\begin{defne}\label{deflaxnormal}
		Let $W\:\A\to \CAT$ be a $2$-functor with $\A$ small, and consider $2$-functors $M,N\:\Grothop{W}\to \D$. A \dfn{cartesian-marked lax natural transformation $\alpha$ from $M$ to $N$}, denoted $\alpha\:M\alaxn{}N$,\v is a lax natural transformation $\alpha$ from $M$ to $N$ such that the structure $2$-cell on every morphism of the form $\p{f,\id{W(f)(X)}}\:(A,X)\to (B,W(f)(X))$ in $\Grothop{W}$ is the identity.
	\end{defne}

\begin{rem}\label{motivationlaxnormal}
	Cartesian-marked lax natural transformations are a particular case of a more general notion of marked lax natural transformation introduced by Gray in~\cite{gray_formalcattheory}. They are a stricter analogue of Descotte, Dubuc and Szyld's~\cite{descottedubucszyld_sigmalimandflatpseudofun} sigma limits. ``Cartesian" refers to the fact that the marking is precisely given by the chosen cartesian liftings of the 2-category of elements $\groth{W}\:\Grothop{W}\to \A$. Street showed in~\cite{street_limitsindexedbycatvalued} that this less general notion is totally sufficient to build all the general limits considered by Gray.
\end{rem}
	
	\begin{defne}\label{laxnormalconicallimits}
		Let $W\:\A\to \CAT$ be a $2$-functor with $\A$ small, and let $F\:\Grothop{W} \to \C$ be a $2$-functor. Notice that $\Grothop{W}$ is small, since $\A$ is small. The \dfn{cartesian-marked lax conical limit of $F$}, denoted as $\laxnlim{\Delta 1}{F}$, is (if it exists) an object $L\in \C$ together with an isomorphism of categories
		$$\HomC{\C}{U}{L}\iso \HomC{\mlaxn{\Grothopdiag{W}}{\CAT}}{\Delta 1}{\HomC{\C}{U}{F(-)}}$$
		$2$-natural in $U\in \C\op$,\v where $\mlaxn{\Grothop{W}}{\CAT}$ is the $2$-category of $2$-functors, cartesian-marked lax natural transformations and modifications from $\Grothop{W}$ to \CAT.
		
		\noindent When $\laxnlim{\Delta 1}{F}$ exists, the identity on $L$ provides a cartesian-marked lax natural transformation $\lambda\:\Delta 1 \alaxn{}\HomC{\C}{L}{F(-)}$ called the \dfn{universal cartesian-marked lax cone}.
				
		Let $W\:\A\to\CAT$ be a $2$-functor with $\A$ small, and let $F\:{\left(\Grothop{W}\right)}\op\to \C$ be a $2$-functor. The \dfn{cartesian-marked lax conical colimit of $F$}, denoted as $\laxncolim{\Delta 1}{F}$, is (if it exists) an object $C\in \C$ together with a natural isomorphism of categories
		$$\HomC{\C}{C}{U}\iso \HomC{\mlaxn{\Grothopdiag{W}}{\CAT}}{\Delta 1}{\HomC{\C}{F(-)}{U}}$$
	\end{defne}
	
	\begin{rem}\label{fromgrothisnotrestrictive}
		Considering $2$-functors $F$ of the form $F\:\Grothop{W}\to\C$ in \defx\ref{laxnormalconicallimits} is not restrictive at all. Indeed any $2$-category $\B$ can be seen as the $2$-category of elements of the $2$-functor $\Delta 1\:\B\to\CAT$ constant at $\1$.
	\end{rem}
	
	\begin{rem}
		We now show a new, more elementary proof of the fact that cartesian-marked lax conical limits are particular weighted $2$-limits. Street states the analogous result in~\cite{street_limitsindexedbycatvalued} for all the general $2$-limits introduced by Gray in~\cite{gray_formalcattheory}, with a complex proof that gives the weight as the coidentifier of a certain $2$-cell with horizontal codomain the weight of lax conical limits. We present an original explicit weight for cartesian-marked lax conical limits that is actually simpler than the one for lax conical limits. Indeed the latter involves quotients of lax $2$-dimensional slices (see Street's~\cite{street_limitsindexedbycatvalued}), while the former only needs ordinary $1$-dimensional slices. The reason is that the laxness of cartesian-marked lax natural transformations is concentrated in the vertical part of $\Grothop{W}$.  
	\end{rem}
	
	\begin{teor}\label{laxnormalareweighted}
		Cartesian-marked lax conical limits are particular weighted $2$-limits. More precisely, given $2$-functors $Z\:\A\to\CAT$ and $F\:\Grothop{Z}\to\C$ with $\A$ small, the weight that realizes $\laxnlim{\Delta 1}{F}$ is
		\begin{fun}
			\Wlaxn & \: & \Grothop{Z} \hphantom{..}& \too & \CAT \\[1ex]
			&& \fib[n][3]{(B,X')}{(g,\beta)}{(C,X'')} &\mto & \fib[n][3]{\slice{Z(B)}{X'}}{\beta\c Z(g)(-)}{\slice{Z(C)}{X''}}\\[4.8ex]
			&& (g,\beta)\aR{\delta}(h,\gamma) & \mto & Z(\delta)_{\dom(-)}
		\end{fun}
		where the action of $\beta\c Z(g)(-)$ on morphisms\v is given by $Z(g)(\dom(-))$.
	\end{teor}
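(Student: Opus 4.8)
The plan is to reduce the theorem to a single natural isomorphism of hom-categories. Precisely, I would show that for every $2$-functor $G\:\Grothop{Z}\to\CAT$ there is an isomorphism of categories
\[
\HomC{\mlaxn{\Grothop{Z}}{\CAT}}{\Delta 1}{G}\;\iso\;\HomC{\m{\Grothop{Z}}{\CAT}}{\Wlaxn}{G},
\]
natural in $G$ and matching modifications on the two sides. Substituting $G=\HomC{\C}{U}{F(-)}$ and comparing with the defining universal property of $\laxnlim{\Delta 1}{F}$ (\defx\ref{laxnormalconicallimits}) and with the defining isomorphism \eqref{isoweightlim} of a weighted $2$-limit, one reads off that an object $L\in\C$ realizes $\laxnlim{\Delta 1}{F}$ if and only if it realizes $\wlim{\Wlaxn}{F}$, $2$-naturally in $U$, which is the assertion. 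Before this, one checks that $\Wlaxn$ is a well-defined $2$-functor: the only non-formal points are that it preserves composition, which unwinds via the functoriality of $Z$ and the composition law $(h,\gamma)\c(g,\beta)=(hg,\gamma\c Z(h)(\beta))$ in $\Grothop{Z}$, and that $Z(\delta)_{\dom(-)}$ genuinely lands in morphisms of the target slice, which is exactly the naturality of $Z(\delta)\:Z(g)\aR{}Z(h)$ together with the defining condition $\beta=\gamma\c Z(\delta)_{X'}$ of a $2$-cell of $\Grothop{Z}$.

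For the main isomorphism the structural fact to exploit is that each $\Wlaxn(B,X)=\slice{Z(B)}{X}$ has terminal object $\id{X}\:X\to X$, that every object $\zeta\:Y\to X$ of it equals $\Wlaxn(\id{B},\zeta)(\id{Y})$ (post-composition carries $\id{Y}$ to $\zeta$), and that the unique slice arrow $\zeta\to\id{X}$ is $\zeta$ itself. Given a $2$-natural $\theta\:\Wlaxn\aR{}G$, I would set $\bar\theta_{(B,X)}\deq\theta_{(B,X)}(\id{X})$; for a morphism $(f,\gamma)\:(B,X)\to(C,Y)$ of $\Grothop{Z}$, $2$-naturality gives $G(f,\gamma)(\bar\theta_{(B,X)})=\theta_{(C,Y)}(\gamma)$, and I would take $\bar\theta_{(f,\gamma)}$ to be the image under $\theta_{(C,Y)}$ of the unique slice arrow $\gamma\to\id{Y}$. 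When $\gamma$ is an identity this slice arrow is an identity, so the cartesian-marking axiom of \defx\ref{deflaxnormal} holds automatically; the lax-transformation coherence axioms follow from functoriality of the $\theta_{(C,Y)}$ together with $1$-cell $2$-naturality, and the $2$-dimensional axiom from $2$-cell $2$-naturality. Conversely, given a cartesian-marked lax cone $\alpha\:\Delta 1\alaxn{}G$, I would put $\theta_{(B,X)}(\zeta\:Y\to X)\deq G(\id{B},\zeta)(\alpha_{(B,Y)})$, using the vertical components $\alpha_{(\id{B},\sigma)}$ to define $\theta_{(B,X)}$ on slice arrows. Then I would verify that $\theta$ is strictly $2$-natural against an arbitrary morphism $(f,\gamma)$ by factoring $(f,\gamma\c Z(f)(\zeta))=(\id{C},\gamma\c Z(f)(\zeta))\c(f,\id{Z(f)(Y)})$ in $\Grothop{Z}$, applying $G$, and using $\alpha_{(f,\id{})}=\id{}$ to collapse the $(f,\id{})$-factor, which upgrades what a priori is only a comparison arrow into an honest equality of functors. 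The assignments $\theta\mapsto\bar\theta$ and $\alpha\mapsto\theta$ are then readily mutually inverse, send modifications to modifications, and are natural in $G$.

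The step I expect to be the main obstacle is precisely this last verification: seeing that the cartesian-marking is not merely sufficient but exactly calibrated, so that the lax naturality squares of $\alpha$ become strict equalities after transport along the post-composition functors $\zeta\c(-)$, and, dually, that a strictly $2$-natural $\theta$ out of $\Wlaxn$ can only carry non-identity structure $2$-cells on the non-cartesian morphisms. The remaining $2$-dimensional bookkeeping — interchange, horizontal composites of $2$-cells, and the modification axioms yielding naturality in $G$ — is routine but lengthy, and I would organize it using the presentation of $\Grothop{Z}$ from \conx\ref{2SetGrothconstrtoconicalize} (every morphism a composite $(\id{},\alpha)\c\underline{f}^X$ and every $2$-cell a whiskering $(\id{},\beta)\h\underline{\delta}^X$), so that only the generators need to be inspected.
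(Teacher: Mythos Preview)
Your proposal is correct and follows essentially the same approach as the paper's proof: both establish the isomorphism
\[
\HomC{\mlaxn{\Grothop{Z}}{\CAT}}{\Delta 1}{G}\;\iso\;\HomC{\m{\Grothop{Z}}{\CAT}}{\Wlaxn}{G}
\]
by exploiting that each $\Wlaxn(B,X)=\slice{Z(B)}{X}$ has terminal object $\id{X}$, so that a strict $2$-natural $\theta$ is determined by its values on these terminals, and conversely a cartesian-marked lax cone extends along the naturality constraint. The paper's proof is much terser---it writes down the assignment $\phi\mapsto[\phi]$ by specifying $[\phi]_{(B,X')}(\id{X'})$ and the value on a generating slice morphism, then asserts that $[\phi]$ ``extends in a unique way to a $2$-natural transformation''---whereas you have unpacked exactly what that extension is (namely $\theta_{(B,X)}(\zeta)=G(\id{B},\zeta)(\alpha_{(B,Y)})$) and explained why the cartesian marking is precisely what makes the extension strict rather than lax; but the underlying argument is the same.
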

	\begin{proof}
		Given $\phi\:\Delta 1 \alaxn{}N$ a cartesian-marked lax natural transformation, we convert it into a $2$-natural transformation $[\phi]\:\Wlaxn\aR{} N$ setting, for every $(B,X')\in \Grothop{Z}$, 
		$$[\phi]_{(B,X')}(\id{X'})\deq \phi_{(B,X')}$$
		$$[\phi]_{(B,X')}\left(\begin{cd}*[1][0.5]
			X\arrow[rd,"{\alpha}"']\arrow[rr,"{\alpha}"] \&\& X'\arrow[ld,equal] \\
			\& X'
		\end{cd}\right)\deq \phi_{(\id{B},\alpha)}.$$
		$[\phi]$ extends in a unique way to a $2$-natural transformation.
	\end{proof}

	The following corollary will be useful for the proof of \thex\ref{redlaxnormalconical}.
	
	\begin{coroll}\label{laxnormalcolimareweighted}
		Cartesian-marked lax conical colimits are particular weighted $2$-colimits, and the weight that expresses them is $\Wlaxn$.
	\end{coroll}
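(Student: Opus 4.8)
The plan is to deduce the corollary from \thex\ref{laxnormalareweighted} by the standard duality ``colimit in $\C$ $=$ limit in $\C\op$'', which applies simultaneously to the cartesian-marked lax conical notion and to the weighted $2$-limit notion; no new computation is required, since all the content sits in \thex\ref{laxnormalareweighted}. What the proof of that theorem really produces is, for each small-domain $2$-functor $Z\:\A\to\CAT$, an isomorphism of categories $\HomC{\mlaxn{\Grothop{Z}}{\CAT}}{\Delta 1}{N}\iso\HomC{\m{\Grothop{Z}}{\CAT}}{\Wlaxn}{N}$, $2$-natural in $N\in\m{\Grothop{Z}}{\CAT}$, implemented by the assignment $\phi\mapsto[\phi]$ given there. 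The point to observe is that the marked-transformation $2$-category $\mlaxn{\Grothop{Z}}{\CAT}$ depends only on the domain $\Grothop{Z}$ together with its cartesian marking, not on any target $2$-category, so it is left untouched by replacing the ambient $2$-category with its opposite.

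Now, given $F\:{\left(\Grothop{W}\right)}\op\to\C$ as in \defx\ref{laxnormalconicallimits}, I would apply \thex\ref{laxnormalareweighted} with the $2$-category $\C\op$, with $Z\deq W$, and with the diagram $F\op\:\Grothop{W}\to\C\op$ (using ${\left({\left(\Grothop{W}\right)}\op\right)}\op=\Grothop{W}$). Since $2$-categorical ``op'' reverses $1$-cells but not $2$-cells, one has the on-the-nose identification of $2$-functors $\Grothop{W}\to\CAT$
$$\HomC{\C\op}{U}{F\op(-)}=\HomC{\C}{F(-)}{U},$$
and, unwinding the defining isomorphisms via $\HomC{\C\op}{U}{-}=\HomC{\C}{-}{U}$, the object $\laxnlim{\Delta 1}{F\op}$ computed in $\C\op$ is exactly $\laxncolim{\Delta 1}{F}$ computed in $\C$, while the weighted $2$-limit $\wlim{\Wlaxn}{F\op}$ in $\C\op$ is exactly the weighted $2$-colimit $\wcolim{\Wlaxn}{F}$ in $\C$; the required $2$-naturality in $U\in{\left(\C\op\right)}\op=\C$ matches the naturality in $U$ asked in \defx\ref{laxnormalconicallimits}. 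Hence \thex\ref{laxnormalareweighted}, read in $\C\op$, states precisely that cartesian-marked lax conical colimits are the weighted $2$-colimits with weight $\Wlaxn$.

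Alternatively, one can bypass opposite categories and simply re-run the proof of \thex\ref{laxnormalareweighted} with $N\deq\HomC{\C}{F(-)}{U}$: the same formula $\phi\mapsto[\phi]$ turns a cartesian-marked lax cocone under $F$ into a $\Wlaxn$-weighted cocone, $2$-naturally in $U\in\C$, and comparing universal properties yields the claim. Either route is routine, and there is no genuine obstacle: the laxness of cartesian-marked lax transformations is concentrated in the vertical part of $\Grothop{W}$, which the duality does not touch, so the only thing requiring care is keeping the various opposite-category conventions straight — a purely bookkeeping matter.
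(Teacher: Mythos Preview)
Your proposal is correct; in fact the paper gives no proof at all for this corollary, treating it as immediate from \thex\ref{laxnormalareweighted}. Your second route makes this precise in the most direct way: the proof of \thex\ref{laxnormalareweighted} actually establishes, for any $N\:\Grothop{Z}\to\CAT$, a $2$-natural isomorphism $\HomC{\mlaxn{\Grothop{Z}}{\CAT}}{\Delta 1}{N}\iso\HomC{\m{\Grothop{Z}}{\CAT}}{\Wlaxn}{N}$, and instantiating $N=\HomC{\C}{F(-)}{U}$ identifies the two universal properties on the nose---so no genuine duality argument is even needed.
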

	
		We now present our new proof of the fact that every weighted $2$-limit can be reduced to a cartesian-marked lax conical one. This was first proved by Street in~\cite{street_limitsindexedbycatvalued}; another proof can be derived from Proposition~3.18 of Szyld's~\cite{szyld_limlifttheortwodimmonad}. Our proof is based on the elementary \conx\ref{2SetGrothconstrtoconicalize}, allowing it to be understood by a wider audience.

	\begin{teor}\label{redlaxnormalconical}
		Every weighted $2$-limit can be reduced to a cartesian-marked lax conical one. More precisely, given $2$-functors $F\:\A\to \C$ and $W\:\A\to\CAT$ with $\A$ small,
		$$\wlim{W}{F}\iso \laxnlim{\Delta 1}{\left(F\c \groth{W}\right)}$$
		either side existing if the other does, where $\groth{W}$ is the $2$-category of elements of $W$.
	\end{teor}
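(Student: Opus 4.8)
The plan is to follow the general conicalization strategy recalled above: first rewrite the weight $W$ itself as a cartesian-marked lax conical colimit of representables, and then transport this decomposition to an arbitrary weighted $2$-limit by means of the lemma of continuity of a limit in its weight, formula~\refs{isolemmacont}. Indeed \conx\ref{2SetGrothconstrtoconicalize} was built for exactly this, producing the $2$-functor $H=\yy\c\groth{W}\op\:{\left(\Grothop{W}\right)}\op\to\m{\A}{\CAT}$.

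\emph{Step 1 (conicalization of the weight).} Starting from the $2$-enriched co-Yoneda isomorphism $W\iso\wcolim{W}{\yy}$ in $\m{\A}{\CAT}$, recalled above, I would show that \conx\ref{2SetGrothconstrtoconicalize} provides an isomorphism of categories, $2$-natural in $U\:\A\to\CAT$,
\[\HomC{\m{\A}{\CAT}}{W}{\HomC{\m{\A}{\CAT}}{\yy(-)}{U}}\;\iso\;\HomC{\mlaxn{\Grothop{W}}{\CAT}}{\Delta 1}{\HomC{\m{\A}{\CAT}}{H(-)}{U}}\]
between $W$-weighted cocylinders of $\yy$ with vertex $U$ (on the left) and cartesian-marked lax cones over $H$ with vertex $U$ (on the right). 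The assignment $\phi\mapsto\t{\phi}$ of \conx\ref{2SetGrothconstrtoconicalize}, together with the explicit recipe there for recovering $\phi_A$ and its action on morphisms and $2$-cells from $\t{\phi}$, gives the bijection on objects and its inverse; what remains is to check that it is functorial in the modification layer and natural in $U$. Composing with $W\iso\wcolim{W}{\yy}$ and with the defining universal property of cartesian-marked lax conical colimits then yields $W\iso\laxncolim{\Delta 1}{H}$ in $\m{\A}{\CAT}$, hence $W\iso\wcolim{\Wlaxn}{H}$ by \corx\ref{laxnormalcolimareweighted}.

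\emph{Steps 2--3 (continuity and representable reduction).} Substituting the decomposition $W\iso\wcolim{\Wlaxn}{H}$ into~\refs{isolemmacont} yields
\[\wlim{W}{F}\;\iso\;\wlim{\wcolim{\Wlaxn}{H}}{F}\;\iso\;\wlim{\Wlaxn}{\Big(\wlim{H(-)}{F}\Big)}.\]
For every $(B,X)\in\Grothop{W}$ we have $H(B,X)=\yy(B)=\HomC{\A}{B}{-}$, so the inner limit is weighted by a representable and the ($2$-enriched) Yoneda reduction gives $\wlim{\HomC{\A}{B}{-}}{F}\iso F(B)=(F\c\groth{W})(B,X)$; a routine check of $2$-naturality in $(B,X)$ then identifies the $2$-functor $\wlim{H(-)}{F}\:\Grothop{W}\to\C$ with $F\c\groth{W}$. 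Hence
\[\wlim{W}{F}\;\iso\;\wlim{\Wlaxn}{(F\c\groth{W})}\;\iso\;\laxnlim{\Delta 1}{(F\c\groth{W})},\]
the last isomorphism being \thex\ref{laxnormalareweighted} applied with $Z=W$. Since~\refs{isolemmacont} and \thex\ref{laxnormalareweighted} are both of the ``either side existing if the other does'' kind, and the representable reduction always exists, this also establishes the existence clause.

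The main obstacle is Step~1: promoting the object-level correspondence already spelled out in \conx\ref{2SetGrothconstrtoconicalize} to a genuine isomorphism of categories natural in $U$. Concretely, this amounts to matching the modification layers on the two sides and, crucially, to verifying that the ``marking'' condition defining $\mlaxn{\Grothop{W}}{\CAT}$ --- the structure $2$-cells over the cartesian morphisms $\p{f,\id{W(f)(X)}}$ being identities --- corresponds exactly to the identifications forced during \conx\ref{2SetGrothconstrtoconicalize} (in particular to $\t{\phi}_{\underline{f}^X}=\id{}$), so that neither side carries any spurious or missing data. Steps~2 and~3, by contrast, are the standard $\V$-enriched calculus specialized to $\V=\CAT$, where the limits and colimits in play are all strict and no coherence subtleties arise.
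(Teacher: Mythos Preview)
Your proposal is correct and follows essentially the same approach as the paper's own proof: both conicalize $W\iso\wcolim{W}{\yy}$ via \conx\ref{2SetGrothconstrtoconicalize} to obtain $W\iso\laxncolim{\Delta 1}{(\yy\c\groth{W}\op)}$, rewrite this as $\wcolim{\Wlaxn}{(\yy\c\groth{W}\op)}$ by \corx\ref{laxnormalcolimareweighted}, apply the continuity lemma~\refs{isolemmacont}, reduce the inner representable-weighted limit to $F\c\groth{W}$, and finish with \thex\ref{laxnormalareweighted}. The paper also flags Step~1 (promoting the bijection of \conx\ref{2SetGrothconstrtoconicalize} to a $2$-natural isomorphism of categories) as the point requiring verification, calling it ``straightforward to extend''; your more explicit discussion of what this entails is a welcome elaboration rather than a departure.
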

	
	\begin{proof}
		By \remx\ref{needoflaxcones}, we can just essentially conicalize the weighted $2$-colimits $W\iso \wcolim{W}{\yy}$ with $W\:\A\to \CAT$ a $2$-copresheaf with $\A$ small. It is straightforward to extend \conx\ref{2SetGrothconstrtoconicalize} to an isomorphism of categories
		\begin{equation}\label{isoredlaxnormal}
			\scalebox{0.95}{$\HomC{\m{\A}{\CAT}}{W}{\HomC{\m{\A}{\CAT}}{\y{-}}{U}}\iso \HomC{\mlaxn{\Grothopdiag{W}}{\CAT}}{\Delta 1}{\HomC{\m{\A}{\CAT}}{{\left(\yy\c\h{\groth{W}}\op\right)}(-)}{U}}$}
		\end{equation}
		$2$-natural in $U\in\m{\A}{\CAT}$, expressing
		$$W\iso \wcolim{W}{\yy}\iso \laxncolim{\Delta 1}{\left(\yy\c\h{\groth{W}}\op\right)}.$$
		
		Consider now $2$-functors $F\:\A\to\C$ and $W\:\A\to\CAT$ with $\A$ small. Then by the argument above and \corx\ref{laxnormalcolimareweighted}
		$$W\iso \laxncolim{\Delta 1}{\left(\yy\c\h{\groth{W}}\op\right)}\iso \wcolim{\Wlaxn}{\left(\yy\c\h{\groth{W}}\op\right)}.$$
		By the lemma of continuity of a limit in its weight (see \remx\ref{needoflaxcones}) and \thex\ref{laxnormalareweighted},
		$$\wlim{W}{F}\iso\wlim{\Wlaxn}{\left(\wlim{\left(\yy\c\h {\groth{W}}\op\right)(-)}{F}\right)}\iso\wlim{\Wlaxn}{\left(F\c \groth{W}\right)}\iso\laxnlim{\Delta 1}{\left(F\c \groth{W}\right)}$$
		where the isomorphism in the middle is easy to prove.
	\end{proof}
	
	\begin{rem}\label{univlaxnormalcone}
		The proof of \thex\ref{redlaxnormalconical}, together with the proofs of \thex\ref{laxnormalareweighted} and \corx\ref{laxnormalcolimareweighted}, also shows how to obtain the correspondence between the universal cylinder of a weighted $2$-limit and the associated universal cartesian-marked lax cocone. Calling the two, respectively, 
		$$\lambda\:W\aR{} \HomC{\C}{L}{F(-)} \text{\quad and \quad}\hats{\lambda}\:\Delta 1 \alaxn{} \HomC{\C}{L}{\left(F\c \groth{W}\right)(=)}$$
		the correspondence is given, for every $(f,\alpha)\:(A,X)\to(B,X')$ in $\Grothop{W}$, by
		\begin{equation}\label{lambdahats}
			\hats{\lambda}_{(A,X)}=\lambda_A(X)\quad\text{ and }\quad\hats{\lambda}_{(f,\alpha)}=\lambda_B(\alpha).
		\end{equation}
	\end{rem}
	
	\begin{prop}\label{preservereflect}
		A weighted $2$-limit is preserved or reflected precisely when its associated cartesian-marked lax conical limit is so.
	\end{prop}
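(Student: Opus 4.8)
The plan is to read the statement off \thex\ref{redlaxnormalconical} together with the explicit dictionary between universal cones recorded in \remx\ref{univlaxnormalcone}. Fix $2$-functors $F\:\A\to\C$ and $W\:\A\to\CAT$ with $\A$ small, and suppose $L\in\C$ realizes $\wlim{W}{F}\iso\laxnlim{\Delta 1}{\p{F\c\groth{W}}}$, with universal cylinder $\lambda\:W\aR{}\HomC{\C}{L}{F(-)}$ and associated universal cartesian-marked lax cone $\hats{\lambda}\:\Delta 1\alaxn{}\HomC{\C}{L}{\p{F\c\groth{W}}(=)}$ related by \eqref{lambdahats}. If no such $L$ exists, \thex\ref{redlaxnormalconical} says neither limit exists and there is nothing to preserve or reflect, so we may assume existence throughout.

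First I would check that, for any $2$-functor $G\:\C\to\D$, the correspondence of \conx\ref{2SetGrothconstrtoconicalize} between cylinders and cartesian-marked lax cones commutes with postcomposition by $G$. This is immediate from the formulas: the image cylinder $G\lambda$ has components $\p{G\lambda}_A(X)=G\p{\lambda_A(X)}$, while by \eqref{lambdahats} the image cone $G\hats{\lambda}$ has $\p{G\hats{\lambda}}_{(A,X)}=G\p{\lambda_A(X)}$ and $\p{G\hats{\lambda}}_{(f,\alpha)}=G\p{\lambda_B(\alpha)}$; hence $G\hats{\lambda}$ is precisely the cartesian-marked lax cone associated, via \conx\ref{2SetGrothconstrtoconicalize} applied in $\D$ to the diagram $G\c F$, to the cylinder $G\lambda$. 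The identical computation works with an arbitrary cylinder $\mu\:W\aR{}\HomC{\C}{U}{F(-)}$ in place of $\lambda$, since \eqref{lambdahats} is just an instance of the isomorphism \eqref{isoredlaxnormal}, which is $2$-natural in the representing variable $U$.

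Next, $G$ preserves $\wlim{W}{F}$ exactly when $G\lambda$ is a universal cylinder for $G\c F$ weighted by $W$ in $\D$, and $G$ preserves $\laxnlim{\Delta 1}{\p{F\c\groth{W}}}$ exactly when $G\hats{\lambda}$ is a universal cartesian-marked lax cone for $G\c F\c\groth{W}$ in $\D$. By \thex\ref{redlaxnormalconical} applied in $\D$ with $G\c F$ in place of $F$, together with the cone dictionary of \remx\ref{univlaxnormalcone}, a cylinder in $\D$ is universal if and only if its associated cartesian-marked lax cone is universal. Combined with the previous paragraph, which identifies $G\hats{\lambda}$ with the cone associated to $G\lambda$, this yields that $G$ preserves the weighted $2$-limit if and only if it preserves the associated cartesian-marked lax conical limit. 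The reflection statement is symmetric: $G$ reflects $\wlim{W}{F}$ means that every cylinder $\mu$ over $F$ in $\C$ whose image $G\mu$ is universal in $\D$ is already universal in $\C$, and likewise for $\hats{\mu}$ and cartesian-marked lax cones; since the cone dictionary holds in both $\C$ and $\D$ and commutes with $G$, the two reflection conditions translate into one another.

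The only thing that needs care is keeping track that the isomorphism of \thex\ref{redlaxnormalconical} is the one induced, $2$-naturally in $U\in\C\op$, by \conx\ref{2SetGrothconstrtoconicalize}, so that it genuinely is a bijective correspondence of cones and cylinders rather than merely of the objects they represent; but this is exactly what \remx\ref{univlaxnormalcone} records. Everything else is a formal unwinding of the definitions of preservation and reflection of $2$-limits, and I expect this bookkeeping to be the main — and essentially only — obstacle.
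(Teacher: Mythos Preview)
Your proposal is correct and is precisely a detailed unpacking of the paper's own proof, which reads in full ``Clear after \remx\ref{univlaxnormalcone}.'' You have spelled out exactly why it is clear: the explicit formulas \eqref{lambdahats} show that applying $G$ commutes with the cylinder/cone correspondence, and then \thex\ref{redlaxnormalconical} in $\D$ transfers universality.
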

	\begin{proof}
		Clear after \remx\ref{univlaxnormalcone}.
	\end{proof}
	
	\begin{rem}\label{remoplaxnormalvslaxnormal}
		As weighted $2$-colimits in $\C$ are weighted $2$-limits in $\C\op$, we automatically obtain from \thex\ref{redlaxnormalconical} the reduction of weighted $2$-colimits to cartesian-marked lax conical ones. However, this considers the 2-category of elements $\groth{W}\:\Grothop{W}\to\A\op$ of the weight $W\:\A\op\to \Cat$, rather than the more natural $\h\groth{W}\:\Groth{W}\to\A$ with $\Groth{W}$ defined as follows:
		\begin{description}
			\item[an object of $\Groth{W}$] is a pair $(A,X)$ with $A\in \A$ and $X\in F(A)$;
			\item[a morphism $(A,X)\to (B,X')$ in $\Groth{W}$] is a pair $(f,\alpha)$ with $f\:A\to B$ a morphism in $\A$ and $\alpha\:X\to W(f)(X')$ a morphism in $W(A)$;
			\item[a $2$-cell $(f,\alpha)\aR{}(g,\beta)\:(A,X)\to (B,X')$ in $\Groth{W}$] is a $2$-cell $\delta\:f\aR{}g$ in $\A$ such that $W(\delta)_{X'}\c \alpha=\beta$.
		\end{description}
		We believe that it is more natural to reduce weighted $2$-limits to cartesian-marked lax conical ones and weighted $2$-colimits to \predfn{cartesian-marked oplax conical} ones. This idea is original and brings to \thex\ref{oplaxnormalareweighted} and \thex\ref{redoplaxnormalconical}.
	\end{rem}
	
	\begin{defne}
		Let $W\:\A\op\to \CAT$ be a $2$-functor with $\A$ small, and consider $2$-functors $M,N\:{\left(\Groth{W}\right)}\op\to \D$. A \dfn{cartesian-marked oplax natural transformation $\alpha$ from $M$ to $N$}, denoted $\alpha\:M\aoplaxn{}N$,\v is an oplax natural transformation $\alpha$ from $M$ to $N$ such that the structure $2$-cell on every morphism $\p{f,\id{}}$ in ${\left(\Groth{W}\right)}\op$ is the identity.
	\end{defne}
	
	\begin{defne}\label{defoplaxnormalconical}
		Consider 2-functors $W\:\A\op\to \CAT$ and $F\:\Groth{W}\to \C$ with $\A$ small. The \dfn{cartesian-marked oplax conical $2$-colimit of $F$}, denoted as $\oplaxncolim{F}$, is (if it exists) an object $C\in \C$ together with a $2$-natural isomorphism of categories
		$$\HomC{\C}{C}{U}\iso \HomC{\moplaxn{{\left(\Grothdiag{W}\right)}\op}{\CAT}}{\Delta 1}{\HomC{\C}{F(-)}{U}}$$
		
		\noindent When $\oplaxncolim{F}$ exists, the identity on $C$ provides the \dfn{universal cartesian-marked oplax cocone} $\mu\:\Delta 1 \aoplaxn{}\HomC{\C}{F(-)}{C}$.
	\end{defne}
	
	\begin{teor}\label{oplaxnormalareweighted}
		Cartesian-marked oplax conical $2$-colimits are particular weighted $2$-colimits. More precisely, given $2$-functors $Z\:\A\op\to\CAT$ and $F\:\Groth{Z}\to\C$ with $\A$ small, the weight that realizes $\oplaxncolim{F}$ is
		\begin{fun}
			\Woplaxn & \: & {\left(\Groth{Z}\right)}\op \hphantom{.}& \too & \CAT \\[1ex]
			&& (B,X') &\mto & \coslice{X'}{Z(B)}
		\end{fun}
	\end{teor}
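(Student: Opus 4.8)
The plan is to mirror the (short) proof of \thex\ref{laxnormalareweighted}, dualising every variance consistently: slices become coslices, $\Grothop{Z}$ is replaced by the variant $\Groth{Z}$ of \remx\ref{remoplaxnormalvslaxnormal} (in which a morphism $(A,X)\to(B,X')$ carries a map $\alpha\:X\to Z(f)(X')$), and ``lax'' becomes ``oplax''. Since $\A$ is small, $\Groth{Z}$ is small, so $\wcolim{\Woplaxn}{F}$ is defined; by \defx\ref{defoplaxnormalconical} it will then suffice to construct, for every $2$-functor $N\:{\left(\Grothdiag{Z}\right)}\op\to\CAT$, an isomorphism of categories
$$\HomC{\moplaxn{{\left(\Grothdiag{Z}\right)}\op}{\CAT}}{\Delta 1}{N}\iso \HomC{\m{{\left(\Grothdiag{Z}\right)}\op}{\CAT}}{\Woplaxn}{N}$$
which is $2$-natural in $N$. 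Evaluating at $N=\HomC{\C}{F(-)}{U}$ and comparing with the defining universal property of the weighted $2$-colimit will then give the statement, the universal cartesian-marked oplax cocone $\mu$ coming out explicitly from the isomorphism.

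The core step is the explicit conversion, patterned on \conx\ref{2SetGrothconstrtoconicalize}. Given a cartesian-marked oplax natural transformation $\phi\:\Delta 1\aoplaxn{}N$, I would define a $2$-natural transformation $[\phi]\:\Woplaxn\aR{}N$ by setting $[\phi]_{(B,X')}$ on the initial object $\id{X'}$ of the coslice $\coslice{X'}{Z(B)}$ to be the component $\phi_{(B,X')}\in N(B,X')$, and on the coslice morphism from $\id{X'}$ to an object $\alpha\:X'\to X$ --- which corresponds to the ``vertical'' morphism $(\id{B},\alpha)$ of $\Groth{Z}$ --- to be the structure $2$-cell $\phi_{(\id{B},\alpha)}$ of $\phi$; the values of $[\phi]_{(B,X')}$ on the remaining objects and morphisms of the coslice are then forced by $2$-naturality, exactly as in \conx\ref{2SetGrothconstrtoconicalize}. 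The naturality of $[\phi]$ with respect to the ``horizontal'' generators $\p{f,\id{}}$ and the $2$-cells $\delta$ of $\Groth{Z}$ will be dictated by the oplax naturality of $\phi$ together with the marking condition, which says precisely that the structure $2$-cell of $\phi$ on each $\p{f,\id{}}$ is an identity. Conversely, a $2$-natural transformation $\psi\:\Woplaxn\aR{}N$ is sent to the oplax transformation with components $\psi_{(B,X')}(\id{X'})$ and structure $2$-cells read off from the values of $\psi$ on the coslice morphisms out of the initial objects and from $2$-naturality of $\psi$ on the horizontal generators; its marking condition holds by construction. I would then check that these two assignments are mutually inverse, and that, being given by formulas not mentioning $N$ beyond evaluation, the isomorphism is $2$-natural in $N$.

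The bookkeeping inside \conx\ref{2SetGrothconstrtoconicalize} is routine; the main obstacle I anticipate is making the variances genuinely line up --- that it is the \emph{coslice} $\coslice{X'}{Z(B)}$ (with the $\Groth{Z}$-convention of \remx\ref{remoplaxnormalvslaxnormal}) that pairs with \emph{oplax} cartesian-marked transformations, and in particular that the marking condition matches exactly the $2$-naturality of $[\phi]$ on the horizontal generators, neither stronger nor weaker. As a fallback I would keep in reserve a duality argument from \corx\ref{laxnormalcolimareweighted}, identifying $\Groth{Z}$ with some $(-)\op$ (or $(-)\co$) of $\Grothop{Z'}$ for a suitably reindexed weight $Z'$ and $\Woplaxn$ with the matching transform of $\Wlaxn$; but the direct route above is cleaner and, unlike the duality route, keeps the universal cartesian-marked oplax cocone visible, which is what later sections will need.
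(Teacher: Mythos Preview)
Your proposal is correct and is exactly the approach the paper takes: the paper's proof reads, in full, ``The proof is analogous to the one of \thex\ref{laxnormalareweighted}.'' You have simply spelled out that analogy in detail, with the correct dualisations (slices to coslices, $\Grothop{Z}$ to $\Groth{Z}$, lax to oplax), and your fallback duality argument is unnecessary.
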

	\begin{proof}
		The proof is analogous to the one of \thex\ref{laxnormalareweighted}.
	\end{proof}
	
	\begin{teor}\label{redoplaxnormalconical}
		Every weighted $2$-colimit can be reduced to a cartesian-marked oplax conical one. Given $2$-functors $F\:\A\to \C$ and $W\:\A\op\to\CAT$ with $\A$ small,
		$$\wcolim{W}{F}\iso \oplaxncolim{\left(F\c \groth{W}\right)}$$
		where $\groth{W}\:\Groth{W}\to\A$ is the $2$-category of elements of $W$.
	\end{teor}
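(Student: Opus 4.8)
The plan is to run the colimit-and-oplax mirror image of the proof of \thex\ref{redlaxnormalconical}: wherever a lax structure $2$-cell occurred there, an oplax one occurs here; the weight $\Wlaxn$ is replaced by the weight $\Woplaxn$ of \thex\ref{oplaxnormalareweighted}; the $2$-category of elements of the weight over $\A$, which for a $2$-copresheaf is the discrete $2$-opfibration $\Grothop{W}$, becomes for the $2$-presheaf $W$ the discrete $2$-fibration $\Groth{W}$ of \remx\ref{remoplaxnormalvslaxnormal}; and the continuity of a weighted $2$-limit in its weight is replaced by its dual, the continuity of a weighted $2$-colimit in its weight (the dual of~\eqref{isolemmacont}).

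In detail: by \remx\ref{needoflaxcones} and \remx\ref{remoplaxnormalvslaxnormal} it suffices to conicalize the canonical presentation $W\iso\wcolim{W}{\yy}$ of a $2$-presheaf $W\:\A\op\to\CAT$, where $\yy\:\A\to\m{\A\op}{\CAT}$ is the Yoneda embedding. The heart of the argument is the colimit-analogue of \conx\ref{2SetGrothconstrtoconicalize}: running that construction with all arrows and structure $2$-cells reversed and with $\Groth{W}$ in place of $\Grothop{W}$, one extends it to an isomorphism of categories
\[
	\HomC{\m{\A\op}{\CAT}}{W}{\HomC{\m{\A\op}{\CAT}}{\y{-}}{U}}\iso \HomC{\moplaxn{\left(\Groth{W}\right)\op}{\CAT}}{\Delta 1}{\HomC{\m{\A\op}{\CAT}}{(\yy\c\groth{W})(-)}{U}}
\]
$2$-natural in $U\in\m{\A\op}{\CAT}$, which expresses $W\iso\wcolim{W}{\yy}\iso\oplaxncolim{(\yy\c\groth{W})}$ with $\groth{W}\:\Groth{W}\to\A$. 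Then \thex\ref{oplaxnormalareweighted} gives $W\iso\wcolim{\Woplaxn}{(\yy\c\groth{W})}$.

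For arbitrary $2$-functors $F\:\A\to\C$ and $W\:\A\op\to\CAT$ with $\A$ small, substituting this presentation of $W$ into $\wcolim{W}{F}$ and invoking the continuity of a weighted $2$-colimit in its weight yields
\[
	\wcolim{W}{F}\iso\wcolim{\wcolim{\Woplaxn}{(\yy\c\groth{W})}}{F}\iso\wcolim{\Woplaxn}{\left(\wcolim{(\yy\c\groth{W})(-)}{F}\right)}\iso\wcolim{\Woplaxn}{(F\c\groth{W})}\iso\oplaxncolim{(F\c\groth{W})},
\]
where the third isomorphism is the easy identification $\wcolim{(\yy\c\groth{W})(A,X)}{F}=\wcolim{\yy(A)}{F}\iso F(A)=(F\c\groth{W})(A,X)$ provided by Yoneda, and the last isomorphism is again \thex\ref{oplaxnormalareweighted}.

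The step I expect to demand the most care is this colimit-analogue of \conx\ref{2SetGrothconstrtoconicalize}. Although it is \emph{formally} a dualization, it is not the naive one of passing to $\C\op$: applying \thex\ref{redlaxnormalconical} verbatim in $\C\op$ would present $\wcolim{W}{F}$ as a cartesian-marked \emph{lax} conical limit indexed by $\Grothop{W}$ over $\A\op$, i.e.\ exactly the ``less natural'' answer flagged in \remx\ref{remoplaxnormalvslaxnormal}. The content of the present theorem is that for colimits the natural index $2$-category is instead $\Groth{W}$ over $\A$, and these two $2$-categories genuinely differ: they share the same underlying $1$-category, but the fibre morphisms, and hence the structure $2$-cells of the cones that occur, point the opposite way. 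So one really has to redo \conx\ref{2SetGrothconstrtoconicalize} from the start in the oplax direction; the delicate part is keeping straight which morphisms are marked and which structure $2$-cells point which way, after which the remainder is a routine transcription of the proof of \thex\ref{redlaxnormalconical}.
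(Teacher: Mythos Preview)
Your proposal is correct and takes essentially the same approach as the paper, which simply records that the proof is analogous to that of \thex\ref{redlaxnormalconical}. You have spelled out precisely what that analogy entails: replacing $\Grothop{W}$ by $\Groth{W}$, $\Wlaxn$ by $\Woplaxn$, and the continuity of a limit in its weight by the cocontinuity of a colimit in its weight, together with the helpful caveat (echoing \remx\ref{remoplaxnormalvslaxnormal}) that this is not merely the naive passage to $\C\op$.
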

	\begin{proof}
		The proof is analogous to the one of \thex\ref{redlaxnormalconical}.
	\end{proof}
	
	\begin{rem}\label{univoplaxnormalcocone}
		We then also obtain analogues of \remx\ref{univlaxnormalcone} (with the same formulas) and of \prox\ref{preservereflect}.
	\end{rem}
	
	\begin{exampl}\label{univoplaxnormalcoconepresheaves}
		By the proof of \thex\ref{redoplaxnormalconical}, for every $W\:\A\op\to\CAT$ with $\A$ small
		$$W\iso \oplaxncolim{\left(\yy\c\h{\groth{W}}\right)}.$$
		The universal cartesian-marked oplax cocone is given by
		$$\forall \fib[o][2.3]{(B,X')}{(f,\alpha)}{(A,X)} \text{ in } {\Grothdiag{W}} \quad\quad\quad{
			\begin{cd}*[3.4][1.8]
				\y{A} \arrow[rr,"{\ceil{X}}",""'{name=Q}] \arrow[d,"{\y{f}}"'] \&\& W\\
				\y{B} \arrow[rru,bend right,"{\ceil{X'}}"']\& \phantom{.} \arrow[Rightarrow,from=Q,"{\ceil{\alpha}}"{pos=0.36},shift right=1.5ex,shorten <=1ex,shorten >= 2.7ex]
		\end{cd}}$$
		
		In particular, taking $\A=\1$, $W$ is a small category $\D$ and $\groth{W}$ is ${\D}\to{\1}$. We obtain that $\1$ is ``cartesian-marked oplax conical dense", building $\D$ with universal cocone
		$$\forall \fib[o]{D}{f}{C} \text{ in } \D \quad\quad\quad {\begin{cd}*[3.4][2]
				\1 \arrow[rr,"C",""'{name=Q}] \arrow[d,equal] \&\& \D\\
				\1 \arrow[rru,bend right,"D"']\& \phantom{.} \arrow[Rightarrow,from=Q,"{f}"{pos=0.355},shift right=1.5ex,shorten <=0.8ex,shorten >= 2.45ex]
		\end{cd}}$$
	\end{exampl}

	\section{Pointwise Kan extensions along discrete 2-fibrations}\label{sectionkanextensions}
	
	In this section, we propose an original definition of pointwise Kan extension in $\twoCATlax$ along a discrete $2$-opfibration. Our motivating application is a $2$-dimensional generalization of the fact that the category of elements can be abstractly captured by a comma object that also exhibits a pointwise Kan extension. We will prove it in \thex\ref{teorpointkanextforgroth} using such definition. We explain why we should consider $\twoCATlax$ in order to prove this, and then we inscribe $\twoCATlax$ in an original idea of $2$-$\V$-enrichment. It is the concept of $2$-$\V$-enriched category that guides us to a notion of colimit in a $2$-$\Set$-enriched category and then to our notion of pointwise Kan extension in $\twoCATlax$.
	
	Our motivations are to capture the Grothendieck construction in an abstract way, towards an enriched version of the Grothendieck construction, and to understand how the various properties of the $2$-category of elements are connected with each other. We will show in Section~\ref{sectiontwosetgrothconstr} that the pointwise Kan extension result which we prove in~\thex\ref{teorpointkanextforgroth} implies many other properties. 
	
	An important ingredient will be that a pointwise Kan extension in $\twoCATlax$ as originally defined in \defx\ref{defpointkanextensionlaxthreecat} is always a weak Kan extension (\defx\ref{defweakkanextensionlaxthreecat}) as well. The proof, in \prox\ref{pointkanisalsoweak}, will be based on an $\oplaxn\mbox{-}\lax$ generalization of the parametrized Yoneda lemma, proved in \thex\ref{parametrizedyonedaoplaxnlax}, that does not seem to appear in the literature.
	
	The first problem that we encounter is to understand which ambient hosts the $2$-category of elements construction. Aiming at a $2$-dimensional generalization of the fact that the category of elements can be captured by a comma object that also exhibits a pointwise left Kan extension, we observe the following proposition, due to Bird.
	
	\begin{prop}[Bird~\cite{bird_limitsintwocatoflocpresented}]\label{laxnatinsidegrothconstr}
		Let $F\:\B\to \CAT$ be a $2$-functor and consider its $2$-category of elements. There is a cartesian-marked lax natural transformation $\lambda$ of the form
		\begin{eqD*}
			\sq*[l][5][5][\laxn][2.7][2.2][0.52]{\Grothop{F}}{\1}{\B}{\CAT}{}{\groth{F}}{\1}{F}
		\end{eqD*}
	\end{prop}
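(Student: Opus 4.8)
The plan is to write $\lambda$ down by an explicit formula and then observe that the axioms to be checked are immediate from the description of $\Grothop{F}$ recorded in \conx\ref{2SetGrothconstrtoconicalize} and \defx\ref{defexpltwosetgroth}. First I would unwind the square: reading its two legs as $2$-functors $\Grothop{F}\to\CAT$, the composite ``across and down'' is $\1$ precomposed with the unique $2$-functor $\Grothop{F}\to\1$, i.e.\ the constant $2$-functor $\Delta 1$ at the terminal category, while the composite ``down and across'' is $F\c\groth{F}$. A lax natural transformation $\lambda\:\Delta 1\aR{}F\c\groth{F}$ then consists of a functor $\lambda_{(B,X)}\:\1\to F(B)$ for each object $(B,X)$ of $\Grothop{F}$ --- equivalently an object of $F(B)$ --- and, for each morphism $(f,\alpha)\:(B,X)\to(C,X')$, a $2$-cell $\lambda_{(f,\alpha)}\:F(f)\c\lambda_{(B,X)}\aR{}\lambda_{(C,X')}$ in $\CAT$ (using that $\groth{F}$ sends $(f,\alpha)$ to $f$ and that $\Delta 1$ sends it to $\id{\1}$) --- equivalently a morphism $F(f)(X)\to X'$ in $F(C)$. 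I would set $\lambda_{(B,X)}\deq X$ and $\lambda_{(f,\alpha)}\deq\alpha$; this is well typed precisely because, by \defx\ref{defexpltwosetgroth}, the second component $\alpha$ of a morphism of $\Grothop{F}$ over $f$ is exactly a morphism $F(f)(X)\to X'$ in $F(C)$.

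Next I would verify the three lax-naturality axioms for this $\lambda$. The unit axiom holds since $\id{(B,X)}=(\id{B},\id{X})$, so $\lambda_{\id{(B,X)}}=\id{X}$, the identity $2$-cell of $F(\id{B})\c\lambda_{(B,X)}$. For the composition axiom, given $(f,\alpha)\:(B,X)\to(C,X')$ and $(g,\beta)\:(C,X')\to(D,X'')$, the relevant pasting --- the whiskering $F(g)\ast\lambda_{(f,\alpha)}$ followed vertically by $\lambda_{(g,\beta)}$ --- evaluates in $\CAT$ to $\beta\c F(g)(\alpha)$, while the composite in $\Grothop{F}$ is $(g,\beta)\c(f,\alpha)=\p{g\c f,\ \beta\c F(g)(\alpha)}$ by \conx\ref{2SetGrothconstrtoconicalize}, so $\lambda_{(g,\beta)\c(f,\alpha)}=\beta\c F(g)(\alpha)$ as required. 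For the $2$-dimensional axiom, let $\delta\:(f,\alpha)\aR{}(g,\beta)$ be a $2$-cell of $\Grothop{F}$; since $\Delta 1$ sends $\delta$ to an identity $2$-cell, the axiom collapses to $\lambda_{(f,\alpha)}=\lambda_{(g,\beta)}\c\p{F(\delta)\ast\lambda_{(B,X)}}$, i.e.\ $\alpha=\beta\c F(\delta)_X$, which is precisely the condition imposed on $\delta$ in \defx\ref{defexpltwosetgroth}.

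Finally I would check the cartesian-marking condition of \defx\ref{deflaxnormal}: on any morphism of the form $\p{f,\id{F(f)(X)}}\:(B,X)\to(C,F(f)(X))$ the structure $2$-cell is, by construction, $\id{F(f)(X)}$, which is the identity $2$-cell of $F(f)\c\lambda_{(B,X)}$; hence all such morphisms carry identity structure $2$-cells and $\lambda$ is cartesian-marked --- consistently with the marking being indexed by the cartesian liftings of $\groth{F}$ (cf.\ \remx\ref{motivationlaxnormal}). This completes the proof. I do not expect a genuine mathematical obstacle: the $2$-category of elements was engineered in \conx\ref{2SetGrothconstrtoconicalize} exactly so that these identities hold on the nose. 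The only point requiring care is the bookkeeping of conventions --- that $\lambda$ is \emph{lax} rather than oplax, with structure $2$-cells oriented from $\Delta 1$ towards $F\c\groth{F}$ and hence matching the direction of the $2$-cell drawn in the square; that the whiskerings $F(g)\ast(-)$ and $F(\delta)\ast(-)$ are computed inside $\CAT$ as $F(g)(-)$ and $F(\delta)_{(-)}$; and that $\alpha$ is legitimately reinterpreted both as a $1$-cell of $F(C)$ and as a $2$-cell between functors $\1\to F(C)$.
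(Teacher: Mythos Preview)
Your proposal is correct and follows exactly the same approach as the paper: you define $\lambda_{(B,X)}\deq X$ and $\lambda_{(f,\alpha)}\deq\alpha$, which is precisely what the paper does. The paper's proof stops after giving this definition, whereas you additionally spell out the verification of the lax-naturality axioms and the cartesian-marking condition; this extra detail is all correct and simply makes explicit what the paper leaves to the reader.
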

	\begin{proof}
		Given a morphism $(f,\alpha)\:(A,X)\to (B,X')$ in $\Grothop{F}$, $\lambda_{(A,X)}$ is the functor $\1\to F(A)$ corresponding to $X\in F(A)$ and $\lambda_{(f,\alpha)}$ corresponds to $\alpha$. 
	\end{proof}
	
	\begin{rem}\label{needof2catlax}
		\prox\ref{laxnatinsidegrothconstr} forces us to move out of $\twoCAT$ in order to capture the $2$-category of elements (but also just the usual Grothendieck construction) from an abstract point of view. Indeed, we need to at least admit lax natural transformations as $2$-cells. If we wish to recover the Grothendieck construction of pseudofunctors or of general lax functors into $\CAT$, we also need to admit lax functors as $1$-cells of our ambient. We will just consider strict $2$-functors for simplicity, but we actually expect everything to hold for lax functors as well.
		
		We thus consider the lax $3$-category $\twoCATlax$ of $2$-categories, $2$-functors, lax natural transformations and modifications. In~\cite{lambert_discretetwofib}, Lambert has indeed proved that this forms a lax $3$-category, i.e.\ a category enriched over the cartesian closed $1$-category of $2$-categories and normal lax functors. The idea to use $\twoCATlax$ is reinforced by the fact that Buckley, in~\cite{buckley_fibredtwocatandbicat} (continuing the work of Bakovi\'c's~\cite{bakovic_fibrofbicat}), found the need to consider trihomomorphisms $F\:\B\coop\to \twoCATlax$ in order to capture non-split Hermida's $2$-fibrations (\cite{hermida_somepropoffibasafibredtwocat}) via a suitable Grothendieck construction.
		
		We should keep in mind that $\twoCATlax$ has no underlying $2$-category, since the interchange rule now only holds in a lax version, in the sense that we have a modification between the two possible lax natural transformations. Indeed, consider two lax natural transformations
		\begin{cd}
			\A\arrow[r,bend left,"{F}",""'{name=A}]\arrow[r,bend right,"{G}"',""{name=B}]\&\B \arrow[r,bend left,"{H}",""'{name=C}]\arrow[r,bend right,"{K}"',""{name=D}]\&\C;
			\arrow[from=A,to=B,Rightarrow,"{\alpha}"]
			\arrow[from=C,to=D,Rightarrow,"{\beta}"]
		\end{cd}
		Then for every $A\in \A$, the component $\alpha_A$ is a morphism $F(A)\ar{}G(A)$ in $\B$ and we can consider the structure $2$-cell $\beta_{\alpha_A}$ of $\beta$ on such morphism
		\begin{cd}[4.75][4.75]
			H(F(A))\arrow[r,"{\beta_{F(A)}}"]\arrow[d,"{H(\alpha_A)}"']\& K(F(A))\arrow[d,"{K(\alpha_A)}"]\arrow[ld,Rightarrow,"{\beta_{\alpha_{A}}}",shorten <=2.7ex,shorten >=2.2ex]\\
			H(G(A))\arrow[r,"{\beta_{G(A)}}"']\& K(G(A))
		\end{cd}
		The $\beta_{\alpha_A}$'s collect into a modification $\beta_\alpha$ by lax naturality of $\beta$.
	\end{rem}
		
	\begin{rem}\label{remarchetypalthreedimclassifprocchanged}
		 To reach the right notion of pointwise Kan extension in $\twoCATlax$, it will be helpful to inscribe $\twoCATlax$ in an original idea of $2$-$\V$-enrichment. Consider the following chain:
		$$\Set\quad\leadsto\quad \CAT \quad\leadsto\quad \twoCATlax$$
		Such a chain is motivated by Weber's~\cite{weber_yonfromtwotop}, that explains how the 2-dimensional classifier of $\Cat$, which is the archetypal elementary 2-topos, is given by the category of elements. Analogously, we can conceive the subobject classifier of $\Set$, which is the archetypal elementary topos, as the 0-category of elements. We will say in \remx\ref{remapplelementarythreetopos} that we can think of $\twoCATlax$, which hosts the 2-category of elements, as the archetypal 3-dimensional elementary topos. We believe that the sequence above is best captured by what we call a \predfn{$2$-$\V$-enrichment} with $\V=\Set$:
		$$\V\quad\leadsto\quad \VCAT{\V} \quad\leadsto\quad \twoVCAT{\V}$$
		Enriching over $\VCAT{\V}$, we should take into account the fact that $\VCAT{\V}$ is a monoidal $2$-category, and then take a \predfn{weak enrichment} rather than an ordinary one.

		The concept of \predfn{weak enrichment} is explored in Garner and Shulman's~\cite{garnershulman_enrichedcatasafreecocompl}, but the equivalent formulation presented below does not seem to appear in the literature.
	\end{rem}
	
	\begin{cons}[Weak enrichment]\label{consweakenrichment}
		Recall that if $(\V,\otimes,I)$ is a monoidal category with coproducts such that $-\otimes -$ preserves coproducts in each variable, we can define a $\V$-enriched category as pair $(S,\A)$ with $S$ a set (of objects) and $\A$ a monoid in the monoidal category $\m{S\times S}{\V}$ of square $S$-indexed matrices with entries in $\V$, with the tensor product given by matrix multiplication. $\V$-enriched functors can be defined on this line as well.

		Let $(\K,\otimes,I,\alpha,\lambda,\rho)$ be a monoidal $2$-category, i.e.\ a $2$-category $\K$ that is monoidal in the $1$-dimensional sense but such that the tensor product is a $2$-functor $\K\times \K\to \K$. And assume that $\K$ has coproducts and that $-\otimes -$ preserves them in each variable. Then, for every set $S$, the $2$-category $\m{S\times S}{\K}$ is $2$-monoidal as well, with tensor product given by matrix multiplication.
		
		We define a \dfn{$\K$-weakly enriched category} as a pair $(S,\A)$ with $S$ a set (of objects) and $\A$ a pseudomonoid in the monoidal $2$-category $\m{S\times S}{\K}$ of square $S$-indexed matrices with entries in $\K$. Notice that a strict $2$-monoid in $\m{S\times S}{\K}$ is a ${\K}_{\h[3]0}$-enriched category with object set $S$.

		Given two $\K$-weakly enriched categories $(S,\A)$ and $(T,\B)$, we define a \dfn{$\K$-weakly enriched functor $(S,\A)\to (T,\B)$} as a pair $(F,\overline{F})$ with $F\:S\to T$ a function and $\overline{F}\:\A\to F\st\B$ a lax morphism between lax monoids in $\m{S\times S}{\K}$, where 
		$$F\st \B =\left(S\times S\ar{F\times F} T\times T \ar{\B}\V\right).$$

		Given now $(F,\overline{F}), (G,\overline{G})\:(S,\A)\to (T,\B)$ two $\K$-weakly enriched functors, we define a \dfn{$\K$-weakly enriched natural transformation $\phi\:(F,\overline{F})\alax{}(G,\overline{G})$} as a collection of $1$-cells
		$$\phi_A\:I\to \HomC{\B}{F(A)}{G(A)}$$
		in $\K$ for every $A\in S$ and $2$-cells
		\begin{cd}[-0.35][3.8]
			\& \A(A,B) \otimes I \arrow[r, "G\otimes \phi_{A}",""'{name=A}] \&[1.9ex] \B(GA,GB) \otimes \B(FA,GA) \arrow[rd, "\opn{comp}"{inner sep=0.3ex}] \\
			\A(A,B) \arrow[rd, "\lambda_{\A(A,B)}^{-1}"'{inner sep=0.3ex, pos=0.7}] \arrow[ru, "\rho_{\A(A,B)}^{-1}"{pos=0.7}] \&\&\&[-3ex] \B(FA,GB) \\
			\& I \otimes \A(A,B) \arrow[r, "\phi_{B}\otimes F"',""{name=B}] \& \B(FB,GB) \otimes \B(FA,FB) \arrow[ru, "\opn{comp}"'{pos=0.45}]
			\arrow[Rightarrow,from=A,to=B,shift left=2.85ex,"{\h[1]\phi_{A,B}}",shorten <=1.5ex,shorten >=1.5ex]
		\end{cd}
		in $\K$ for every pair $(A,B)\in S\times S$ that satisfy appropriate axioms, see Garner and Shulman's~\cite{garnershulman_enrichedcatasafreecocompl}. A notion of $\K$-weakly-enriched modification can be given as well.
	\end{cons}
	
		We notice that \conx\ref{consweakenrichment} is particularly useful when $\K$ is $\VCAT{\V}$, since we obtain a construction that we can apply to a starting ordinary $\V$.
	
	\begin{defne}\label{deftwovenrichment}
		Let $\V$ be a nice enough monoidal category so that $\VCAT{\V}$ with the tensor product of $\V$-categories becomes a monoidal $2$-category with coproducts such that its tensor product preserves them in each variable. We call \dfn{$2$-$\V$-enrichment} the $\VCAT{\V}$-weak enrichment.
	\end{defne}
	
	\begin{exampl}\label{exampletwosetenrichment}
		Consider $\V=\Set$. Then $2$-$\Set$-enriched categories, functors, natural transformations and modifications are, respectively, bicategories, lax functors, lax natural transformations and modifications.
		
		We will think of $\twoCATlax$ as the lax $3$-category of $2$-$\Set$-enriched categories, restricted to strict weakly enriched categories and functors for simplicity.
	\end{exampl}
	
	\begin{rem}\label{remnametwosetenrichedgroth}
		We can now think of the $2$-category of elements construction as the \predfn{$2$-$\Set$-enriched Grothendieck construction}. It takes enriched presheaves with values in $\VCAT{\Set}=\CAT$, that is the base of the weak enrichment we consider, and converts them into discrete $2$-opfibrations, that can be seen as \predfn{$2$-$\Set$-opfibrations}.
		
		In future work, we will explore how we can generalize this to more general monoidal categories $\V$ in the place of $\Set$, thus obtaining an enriched Grothendieck construction. In particular, we believe that for this the $2$-$\Set$-enrichment should be distinguished from the ordinary $\Cat$-enrichment; we actually expect the latter to be less useful.
	\end{rem}
	
	We give an original definition of weak Kan extension in a lax $3$-category.
	
	\begin{defne}\label{defweakkanextensionlaxthreecat}
		A diagram
		\begin{cd}[5][4]
			\B \arrow[rr,"{F}"]\arrow[d,"{K}"']\&\arrow[dl,Rightarrow,shift right=0.75ex,"{\lambda}"{pos=0.355, inner sep= 0.35ex},shorten <=1.45ex,shorten >= 3ex]\& \C \\
			\A\arrow[rru,"L"']
		\end{cd}
		in a lax $3$-category $\Q$ (that is a category enriched over the $1$-category of $2$-categories and lax functors), exhibits $L$ as the \dfn{weak left Kan extension of $F$ along $K$}, written $L={\lan{K}{F}}$, if pasting with $\lambda$ gives an isomorphism of categories
		\begin{equation}\label{eqweakkanextensionlaxthreecat}
			\HomC{\HomC{\Q}{\A}{\C}}{L}{U}\iso \HomC{\HomC{\Q}{\B}{\C}}{F}{U\c K}
		\end{equation}
		for every $U\in \HomC{\Q}{\A}{\C}$ (the $2$-naturality in $U$ is granted automatically).
	\end{defne}
	
	\begin{rem}\label{remhomtwocategoriestwocatlax}
		Lambert showed in~\cite{lambert_discretetwofib} that $\twoCATlax$ is a lax $3$-category with hom-$2$-categories $$\HomC{\twoCATlax}{\A}{\C}\deq \mlax{\A}{\C}$$
		the $2$-category of $2$-functors from $\A$ to $\C$, lax natural transformations and modifications.
		
		So, in the notation of \defx\ref{defweakkanextensionlaxthreecat}, the isomorphism of categories of equation~\refs{eqweakkanextensionlaxthreecat} becomes, for every $U\in \mlax{\A}{\C}$,
		$$\HomC{\mlax{\A}{\C}}{L}{U}\iso \HomC{\mlax{\B}{\C}}{F}{U\c K}.$$
	\end{rem}
	
	\begin{rem}\label{remaimatpointwise}
		We aim at a notion of \predfn{pointwise Kan extension in $\twoCATlax$} that allows us to prove (in \thex\ref{teorpointkanextforgroth}) a $2$-dimensional version of the pointwise Kan extension result that we have for the category of elements. For this we need a notion of (co)limit in the setting of $\twoCATlax$. There surely is a natural notion of ``external limit" in this context, i.e.\ a limit in the whole $\twoCATlax$, which is that of limit enriched over the $1$-category of $2$-categories and lax functors. Indeed $\twoCATlax$ is enriched over such $1$-category. For $\C$ a $2$-category, this recovers e.g.\ the natural isomorphism
		$$\HomC{\twoCATlax}{\U}{\laxcomma{\Id{\C}}{\Id{\C}}}\iso \HomC{\twoCATlax}{\2}{\HomC{\twoCATlax}{\U}{\C}}$$
		presented by Lambert in~\cite{lambert_discretetwofib} as the universal property of the lax comma object $\laxcomma{\Id{\C}}{\Id{\C}}$ (see \defx\ref{completeunivproplaxcomma}) being the power of $\C$ by $\2$ in $\twoCATlax$.
		
		But we are more interested in an ``internal" notion of colimit, i.e.\ a notion of colimit in a $2$-$\Set$-enriched category (after \exax\ref{exampletwosetenrichment}). This should include the notion of colimit in a $2$-category, but be able to express the laxness as well. After Section~\ref{sectionlaxnormallimits}, we use (now non-necessarily conical) cartesian-marked oplax colimits as colimits in a $2$-$\Set$-category (see \defx\ref{defoplaxnormaltwocolimit}). Notice that the marking is a piece of structure. These colimits are a particular case of the sigma-omega-limits (or sigma-s-limits) considered by Szyld in~\cite{szyld_limlifttheortwodimmonad} and~\cite{szyld_liftingpielimitswithstrictproj}; they are a stricter version of Descotte, Dubuc and Szyld's~\cite{descottedubucszyld_sigmalimandflatpseudofun} weighted sigma limits. By Proposition~3.18 of Szyld's~\cite{szyld_limlifttheortwodimmonad}, these colimits can all be reduced to (still marked) conical ones. However, expressing the diagram, the weight and the marking as different pieces of structure makes it easier for us to define \predfn{pointwise Kan extensions in $\twoCATlax$}.
	\end{rem}
	
	\begin{defne}\label{defoplaxnormaltwocolimit}
		Let $M\:\A\op\to \CAT$ (the marking), $F\:\Groth{M}\to \C$ (the diagram) and $W\:{\left(\Groth{M}\right)}\op\to \CAT$ (the weight) be $2$-functors with $\A$ small. The \dfn{cartesian oplax colimit of $F$ marked by $M$ and weighted by $W$}, denoted as $\oplaxnmcolim{M}{W}{F}$, is (if it exists) an object $C\in \C$ together with an isomorphism of categories
		$$\HomC{\C}{C}{U}\iso \HomC{\moplaxn{{\left(\Grothdiag{M}\right)}\op}{\CAT}}{W}{\HomC{\C}{F(-)}{U}}$$
		$2$-natural in $U\in \C$. When $\oplaxnmcolim{M}{W}{F}$ exists, the identity on $C$ provides a cartesian-marked oplax natural transformation $\mu\:W \aoplaxn{}\HomC{\C}{F(-)}{C}$ called the \dfn{universal cartesian-marked oplax cocylinder}.
		
		We will also need to consider the case in which the domain of $F$ is expressed as $\Grothop{M}$ for some $2$-functor $M\:\A\to \CAT$, and $W\:{\left(\Grothop{M}\right)}\op\to \CAT$. The \dfn{cartesian oplax colimit of $F$ opmarked by $M$ and weighted by $W$}, denoted as $\oplaxnopmcolim{M}{W}{F}$, is (if it exists) an object $C\in \C$ together with a $2$-natural isomorphism of categories
		$$\HomC{\C}{C}{U}\iso \HomC{\moplaxn{{\left(\Grothopdiag{M}\right)}\op}{\CAT}}{W}{\HomC{\C}{F(-)}{U}}$$
	\end{defne}
	
	\begin{rem}
		Cartesian-marked oplax with respect to the marking described in \remx\ref{fromgrothisnotrestrictive}, which we can call \dfn{the trivial marking} and denote as $\opn{triv}$, coincides with strict $2$-naturality. We can then rephrase \thex\ref{redoplaxnormalconical} as follows:

		\noindent \textit{In $\twoCATlax$ every trivially-marked weighted $2$-colimit can be equivalently expressed as a marked trivially-weighted $2$-colimit. More precisely}
		$$\oplaxnmcolim{\opn{triv}}{W}{F}\h[2]\iso\h[2] \oplaxnmcolim{W}{\Delta 1}{\left(F\c \groth{W}\right)}$$
	\end{rem}
	
	\begin{exampl}
		Let $F\:\A\op\to \CAT$ be a $2$-functor with $\A$ small. Then by \exax\ref{univoplaxnormalcoconepresheaves}
		$$F\iso \oplaxnmcolim{F}{\Delta 1}{\left(\yy\c\h \groth{F}\right)}$$
		
		In particular, taking $\A=1$, we obtain that for every small category $\D$
		$$\D\iso \oplaxnmcolim{\D}{\Delta 1}{\Delta 1}.$$
		Notice that the marking given by $\D$ is ``chaotic", in the sense that cartesian-marked oplax with respect to it coincides with oplax. In $\twoCATlax$ the $2$-functor $\1\:\1\to \CAT$ is thus conically dense, analogously to the fact that the functor $\1\:\1\to \Set$ is dense.
	\end{exampl}
	
	\begin{rem}
		We now propose an original notion of pointwise left Kan extension in $\twoCATlax$ along a discrete $2$-opfibration, using our definition of colimit in such setting (\defx\ref{defoplaxnormaltwocolimit}). Our idea is to keep the corresponding diagram and weight considered in the (ordinarily) enriched setting (see Dubuc's~\cite{dubuc_kanextensionsinenrichedcat} or Kelly's~\cite{kelly_basicconceptsofenriched} for the classical definition), but adding the marking that we naturally have when we extend along a discrete $2$-opfibration. In \thex\ref{teorpointkanextforgroth}, this notion will allow us to prove a $2$-dimensional version of the fact that the comma object that captures the category of elements exhibits a pointwise Kan extension.
	\end{rem}
	
	\begin{defne}\label{defpointkanextensionlaxthreecat}
		Consider a diagram
		\begin{cd}[5][4]
			\B \arrow[rr,"{F}"]\arrow[d,"{K}"']\&\arrow[dl,Rightarrow,shift right=0.75ex,"{\lambda}"{pos=0.355, inner sep= 0.35ex},shorten <=1.45ex,shorten >= 3ex]\& \C \\
			\A\arrow[rru,"L"']
		\end{cd}
		in $\twoCATlax$ with $\B$ small and $K$ a discrete $2$-opfibration with small fibres. Then by \thex\ref{essentialimagegivenbytwosetopf}, $K$ is isomorphic in the slice $\slice{\twoCAT}{\A}$ to $\groth{M}$ for some $2$-functor $M\:\A\to \CAT$. We can assume $K$ is in the form $\groth{M}$, up to whiskering the diagram with the isomorphism in the slice. Assume further that $\lambda$ is a cartesian-marked lax natural transformation with respect to $M$.
		
		We say that $\lambda$ exhibits $L$ as the \dfn{pointwise left Kan extension of $F$ along $K$}, written $L=\Lan{K}{F}$, if for every $A\in \A$
		$$L(A)\iso \oplaxnopmcolim{M}{\HomC{\A}{K(-)}{A}}{F}$$
		with universal cartesian-marked oplax cocylinder
		\begin{equation}\label{eqpointkanextensiontwocatlax}
			\HomC{\A}{K(-)}{A}\aRR{L}\HomC{\C}{(L\c K)(-)}{L(A)}\aRR[\oplaxn]{\HomC{\C}{\lambda_{-}}{\id{}}}\HomC{\C}{F(-)}{L(A)};
		\end{equation}
		or equivalently if for every $A\in \A$ and every $C\in \C$ the functor  
		$$\HomC{\C}{L(A)}{C}\arr{}\HomC{\moplaxn{\B\op}{\CAT}}{\HomC{\A}{K(-)}{A}}{\HomC{\C}{F(-)}{C}}$$
		given by the cartesian-marked oplax natural transformation of equation~\refs{eqpointkanextensiontwocatlax} is an isomorphism of categories (notice that the $2$-naturality in $C$ and $A$ is granted, where the latter is using that $L$ is a $2$-functor).
	\end{defne}
	
	\begin{rem}
		The rest of this section is dedicated to the proof that every pointwise left Kan extension in $\twoCATlax$ along a discrete $2$-opfibration (as defined in \defx\ref{defpointkanextensionlaxthreecat}) is a weak left Kan extension in $\twoCATlax$ as well.
		
		For this, we need a generalization of the parametrized Yoneda lemma which is cartesian-marked oplax and lax together (\thex\ref{parametrizedyonedaoplaxnlax}). Such result does not seem to appear in the literature. While a fully lax parametrized Yoneda lemma is not possible, since it is the strict naturality that classically allow to expand the datum on the identity to a complete natural transformation, our version shows the minimal strictness needed to do so.
		
		Interestingly, in the fully strict $2$-natural case such expansion depends on the naturality of what will be our parameter $A$. Instead, we will need to expand via the slight strictness of the cartesian-marked oplax naturality in $B$, and this is harder to achieve.
	\end{rem}
	
	\begin{defne}\label{defoplaxnlax}
		Let $G,H\:\B\op\times \C\to \E$ be $2$-functors. Assume that $\B$ is endowed with the cartesian marking presented by a (split) discrete 2-opfibration $K\:\B\to \A$ (with small fibres). That is, $\B\iso \Grothop{M}$ for some 2-functor $M\:\A\to \Cat$ and has the cartesian marking. An \dfn{$\text{\textbf{oplax}}^\text{\textbf{cart}}\mbox{-}\text{\textbf{lax}}$ natural transformation} $\alpha$ from $G$ to $H$ is a collection of morphisms
		$$\alpha_{B,C}\:G(B,C)\to H(B,C)$$
		in $\E$ for every $(B,C)\in \B\op\times\C$ and, for every $f\:B'\to B$ in $\B\op$ and $g\:C\to C'$ in $\C$, structure $2$-cells
		\begin{eqD*}
			\sq*[o][4.8][4.8][\alpha_{f,C}][3.1][3]{G(B',C)}{H(B',C)}{G(B,C)}{H(B,C)}{\alpha_{B',C}}{G(f,\id{})}{H(f,\id{})}{\alpha_{B,C}}\qquad
			\sq*[l][4.8][4.8][\alpha_{B,g}][3.1][3]{G(B,C)}{H(B,C)}{G(B,C')}{H(B,C')}{\alpha_{B,C}}{G(\id{},g)}{H(\id{},g)}{\alpha_{B,C'}}
		\end{eqD*}
		such that $\alpha_{-,C}$ is cartesian-marked oplax natural in $B\in \B\op$ and $\alpha_{B,-}$ is lax natural in $C\in \C$, and such that the following compatibility axiom holds:\v[0.5]
		\begin{eqD*}
			\begin{cd}*[4.4][-3.4]
				\&[-1ex] G(B,C)\arrow[rr,"{\alpha_{B,C}}"]\arrow[rd,"{G(\id{},g)}"{inner sep=0.2ex},""'{name=A}]\&[-1ex]\& H(B,C)\arrow[rd,"{H(\id{},g)}"{inner sep=0.2ex}]\arrow[d,Rightarrow,shift right=2.85ex,"{\alpha_{B,g}}"{pos=0.44},shorten <=0.8ex,shorten >=2.2ex]\\
				G(B',C)\arrow[rd,"{G(\id{},g)}"'{inner sep=0.2ex},""{name=B}]\arrow[ru,"{G(f,\id{})}"{inner sep=0.2ex}]\&\&G(B,C')\arrow[rr,"{\alpha_{B,C'}}"]\&\vphantom{.}\arrow[d,Rightarrow,shift right=2.85ex,"{\alpha_{f,C'}}",shorten <=1.05ex,shorten >=1ex]\& H(B,C')\\
				\& G(B',C')\arrow[ru,"{G(f,\id{})}"'{inner sep=0.2ex}]\arrow[rr,"{\alpha_{B',C'}}"']\&\&H(B',C')\arrow[ru,"{H(f,\id{})}"'{inner sep=0.2ex}]
				\arrow[equal,from=A,to=B,shorten <=3.6ex,shorten >=3.6ex]
			\end{cd}\h[2] = \h[2] 
			\begin{cd}*[4.4][-3.4]
				\& G(B,C)\arrow[rr,"{\alpha_{B,C}}"]\arrow[d,Rightarrow,shift left=2.85ex,"{\alpha_{f,C}}"'{pos=0.44},shorten <=0.8ex,shorten >=2.2ex]\&\&[-1ex] H(B,C)\arrow[rd,"{H(\id{},g)}"{inner sep=0.2ex},""'{name=A}]\\
				G(B',C)\arrow[rr,"{\alpha_{B',C}}"]\arrow[rd,"{G(\id{},g)}"'{inner sep=0.2ex}]\arrow[ru,"{G(f,\id{})}"{inner sep=0.2ex}]\&\vphantom{.}\arrow[d,Rightarrow,shift left=2.85ex,"{\alpha_{B',g}}"',shorten <=1.05ex,shorten >=1ex]\&H(B',C)\arrow[ru,"{H(f,\id{})}"{inner sep=0.2ex}]\arrow[rd,"{H(\id{},g)}"'{inner sep=0.2ex},""{name=B}]\&\&[-1ex] H(B,C')\\
				\& G(B',C')\arrow[rr,"{\alpha_{B',C'}}"']\&\&H(B',C')\v[2]\arrow[ru,"{H(f,\id{})}"'{inner sep=0.2ex}]
				\arrow[equal,from=A,to=B,shorten <=3.6ex,shorten >=3.6ex]
			\end{cd}
		\end{eqD*}
		
		A \dfn{modification} $\Theta\:\alpha\aM{}\beta\:G\aR[\opln\mbox{-}\lax]{}H$ between $\oplaxn\mbox{-}\lax$ natural transformations is a collection of $2$-cells 
		\tc+[4]{G(B,C)}{H(B,C)}{\alpha_{B,C}}{\beta_{B,C}}{\Theta_{B,C}}
		in $\E$ that forms both, fixing $C$, a modification $\alpha_{-,C}\aM{}\beta_{-,C}$, and, fixing $B$, a modification $\alpha_{B,-}\aM{}\beta_{B,-}$.
	\end{defne}
	
	\begin{teor}[The $\oplaxn\mbox{-}\lax$ parametrized Yoneda lemma]\label{parametrizedyonedaoplaxnlax}
		Let $K\:\B\to \A$ be a \pteor{split} discrete $2$-opfibration \pteor{with small fibres} and $F\:\B\op\times \A\to \CAT$ be a $2$-functor. There is a bijection between
		$$\alpha_{B,A}\:\HomC{\A}{K(B)}{A}\to F(B,A)$$
		$\oplaxn\mbox{-}\lax$ natural in $(B,A)\in \B\op\times \A$ and
		$$\eta_B\:1\to F(B,K(B))$$
		extraordinary lax natural in $B\in \B$ \pteor{see Hirata's~\cite{hirata_notesonlaxends} for a definition of extraordinary lax natural transformations and modifications between them}.
		
		Moreover this bijection extends to an isomorphism of categories, considering as morphisms of the two categories respectively the modifications between $\oplaxn\mbox{-}\lax$ natural transformations and the modifications between extraordinary lax natural transformations.
	\end{teor}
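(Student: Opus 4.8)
The plan is to produce explicit maps both ways and check that they are mutually inverse. \emph{From $\alpha$ to $\eta$:} given an $\oplaxn\mbox{-}\lax$ natural transformation $\alpha$, set $\eta_B\deq\alpha_{B,K(B)}(\id{K(B)})\in F(B,K(B))$; for $f\:B'\to B$ in $\B$, the extraordinary lax structure $2$-cell $\eta_f$ (connecting $F(B',K(f))(\eta_{B'})$ and $F(f\op,K(B))(\eta_B)$ in $F(B',K(B))$, pointing in the direction prescribed by Hirata's definition) is obtained by evaluating the appropriate pasting of the structure $2$-cells $\alpha_{f,-}$ and $\alpha_{B,-}$ at the relevant identities, using the chosen (split) cartesian lift of $K(f)$ to match up the two sides. \emph{From $\eta$ to $\alpha$:} given an extraordinary lax $\eta$ and $h\:K(B)\to A$ in $\A$, let $\hat h\:B\to h_!B$ be the chosen cartesian lift of $h$ afforded by the splitting of $K$, so that $K(h_!B)=A$, and set
$$\alpha^\eta_{B,A}(h)\deq F(\hat h\op,A)(\eta_{h_!B}).$$
On a $2$-cell $\sigma\:h\aR{}h'$ of $\A$, $\alpha^\eta_{B,A}(\sigma)$ is produced from $\sigma$ via the discrete fibration on hom-categories (property (ii) of \defx\ref{deftwosetopf}), together with $F$ and the structure $2$-cells of $\eta$; functoriality of $\alpha^\eta_{B,A}$ then follows from functoriality of $F$ and the splitting axioms. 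The structure $2$-cell of $\alpha^\eta$ in the $\B\op$-variable is recovered from $\eta_f$ through $F$, and is an identity on the marked (cartesian) morphisms precisely because there the chosen lift is the morphism itself; the structure $2$-cell in the $\A$-variable, for $g\:A\to A'$, is recovered from $\eta$ evaluated on the chosen lift of $g$, using $(g\circ h)_!B=g_!(h_!B)$ and that the chosen lift of $g\circ h$ is the composite of those of $g$ and $h$ (so that, up to the coherence $2$-cell $\eta_{\hat h}$, the value $\alpha^\eta_{B,A}(h)$ agrees with the naive $F(B,h)(\eta_B)$ of the strict case).

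\textbf{The bijection.} The round trip $\alpha\mapsto\eta_\alpha\mapsto\alpha^{\eta_\alpha}$ returns $\alpha$ by the ``expansion'' identity
$$\alpha_{B,A}(h)=F(\hat h\op,A)\bigl(\alpha_{h_!B,K(h_!B)}(\id{K(h_!B)})\bigr)=F(\hat h\op,A)(\eta_{h_!B}),$$
which holds because $\hat h$ lies in the marking, so the oplax structure $2$-cell $\alpha_{\hat h,A}$ is the identity, i.e.\ $\alpha_{B,A}\circ\bigl(-\circ K(\hat h)\bigr)=F(\hat h\op,A)\circ\alpha_{h_!B,A}$ on the nose; evaluating at $\id A$ gives the claim. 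This is exactly the mechanism announced in the preceding remarks: whereas in the strict $2$-natural case one reconstructs the whole transformation via naturality in the parameter, here one must reconstruct it via the slight strictness of the cartesian marking in $\B$. That the structure $2$-cells also match on both sides is built into the definitions above. Conversely $\eta\mapsto\alpha^\eta\mapsto\eta_{\alpha^\eta}$ returns $\eta$, since $\alpha^\eta_{B,K(B)}(\id{K(B)})=F(\id{B},K(B))(\eta_B)=\eta_B$ by the unit axiom of the splitting (the chosen lift of an identity is an identity), and then the structure $2$-cells of $\eta$ are recovered from those of $\alpha^\eta$.

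\textbf{Upgrading to an isomorphism of categories.} A modification $\Theta$ between $\oplaxn\mbox{-}\lax$ natural transformations is sent to the family $(\Theta_{B,K(B)})_{\id{K(B)}}$; the modification axioms of \defx\ref{defoplaxnlax} in each variable yield the modification axioms for extraordinary lax natural transformations, and the inverse assignment reconstructs a modification from its $B$-indexed data exactly as $\alpha^\eta$ is reconstructed from $\eta$. These assignments are functorial, compatible with vertical composition, and inverse to one another on objects and on morphisms, so the bijection upgrades to an isomorphism of categories.

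\textbf{Main obstacle.} The crux is the bookkeeping that simultaneously reconciles the cartesian-marked \emph{oplax} structure in the $\B\op$-variable, the \emph{lax} structure in the $\A$-variable, and \emph{extraordinary lax} naturality in the single variable $B$: one has to check that all structure $2$-cells point the right way and that the compatibility axiom of \defx\ref{defoplaxnlax} corresponds precisely to the coherence axioms of an extraordinary lax natural transformation. In particular, the reconstruction $\alpha^\eta$ must be \emph{strictly} (not merely pseudo-)functorial, and this is where it is essential that $K$ is \emph{split} --- so that the chosen cartesian lifts compose strictly and $(-)_!$ is functorial --- rather than merely cloven.
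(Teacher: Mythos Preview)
Your approach is essentially the paper's: the paper first identifies $K$ with $\groth{M}$ and then gives exactly your formulas in the concrete notation $\hat h=(h,\id{})$, $h_!B=(A,M(h)(Y))$, so that $\alpha^\eta_{(B,Y),A}(u)=F((u,\id{}),A)(\eta_{(A,M(u)(Y))})$, and reconstructs the structure $2$-cells and the action on $2$-cells $\theta\:u\Rightarrow v$ by the same forced-by-the-round-trip reasoning you use.

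One point is understated in your sketch. You write that ``functoriality of $\alpha^\eta_{B,A}$ then follows from functoriality of $F$ and the splitting axioms''; in fact this is the place where the \emph{extraordinary lax naturality} of $\eta$ does real work. The paper's check that $\alpha^\eta_{(B,Y),A}(\rho\circ\theta)=\alpha^\eta_{(B,Y),A}(\rho)\circ\alpha^\eta_{(B,Y),A}(\theta)$ uses the cocycle axiom $\eta_{(\id{A},M(\rho\circ\theta)_Y)}=F((\id{A},M(\theta)_Y),A)(\eta_{(\id{A},M(\rho)_Y)})\circ\eta_{(\id{A},M(\theta)_Y)}$ together with the decomposition $\underline{(\rho\circ\theta)}^Y=\underline{\rho}^Y\circ(\id{A},M(\rho)_Y)\,\underline{\theta}^Y$ coming from the local discrete-fibration property; neither $F$ nor the splitting alone supplies this. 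Similarly, the naturality of your reconstructed $(\alpha^\eta_{(g,\gamma),A})_w$ in $w$ needs the identities $\underline{(\theta g)}^Y=\underline{\theta}^{M(g)(Y)}(g,\id{})$ and $\underline{\theta}^{Y'}(\id{},\gamma)=(\id{},M(v)(\gamma))\,\underline{\theta}^{M(g)(Y)}$. None of this changes your outline, but it is precisely the ``bookkeeping'' you flag as the main obstacle, and it does not reduce to functoriality of $F$.
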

	\begin{proof}
		By \thex\ref{essentialimagegivenbytwosetopf}, we can assume that $K$ is in the form $\groth{M}\:\Grothop{M}\to \A$ for a $2$-functor $M\:\A \to \CAT$. Given
		$$\alpha_{(B,Y),A}\:\HomC{\A}{K(B,Y)}{A}\to F((B,Y),A)$$
		$\oplaxn\mbox{-}\lax$ natural in $((B,Y),A)\in {\left(\Grothop{M}\right)}\op\times \A$ (see \defx\ref{defoplaxnlax}), we construct $\eta_{(B,Y)}$ as the composite
		$$1\ar{\id{B}} \HomC{\A}{B}{B}\ar{\alpha_{(B,Y),B}}F((B,Y),B)$$
		Then $\eta_{(B,Y)}$ is extraordinary lax natural in $(B,Y)\in \Grothop{M}$, with structure $2$-cell on $(g,\gamma)\:(B,Y)\to (B',Y')$ in $\Grothop{M}$ given by the pasting
		\begin{eqD}{diagramfromalphatoeta}
			\begin{cd}*[4.5][4.5]
				1 \arrow[r,"{\id{B}}"]\arrow[d,"{\id{B'}}"']\& \HomC{\A}{B}{B} \arrow[ld,equal,shorten <=4.75ex,shorten >=4.75ex]\arrow[d,"{g\c -}"] \arrow[r,"{\alpha_{(B,Y),B}}"]\&F((B,Y),B) \arrow[dd,"{F(\id{},g)}"] \arrow[ld,Rightarrow,"{\alpha_{(B,Y),g}}"{pos=0.61},shorten <=4.45ex,shorten >=2.25ex]
				\\
				\HomC{\A}{B'}{B'}\arrow[d,"{\alpha_{(B',Y'),B'}}"'] \arrow[r,"{-\c g}"']\& \HomC{\A}{B}{B'}\arrow[rd,"{\alpha_{(B,Y),B'}}"{inner sep =0.2ex, pos=0.44}]\arrow[ld,Rightarrow,"{\alpha_{(g,\gamma),B'}}"{pos=0.56},shorten <=3.9ex,shorten >=3.35ex]
				\\
				F((B',Y'),B') \arrow[rr,"{F((g,\gamma),\id{})}"']\&\&F((B,Y),B')
			\end{cd}
		\end{eqD}
		
		Now, given $\eta_B\:1\to F((B,Y),B)$ extraordinary lax natural in $(B,Y)\in \Grothop{M}$, we expand it to functors
		$$\alpha_{(B,Y),A}\:\HomC{\A}{K(B,Y)}{A}\to F((B,Y),A)$$
		$\oplaxn\mbox{-}\lax$ natural in $((B,Y),A)\in {\left(\Grothop{M}\right)}\op\times \A$ as follows, using the cartesian-marked oplax naturality in $(B,Y)$. Given $u\:B\to A$ in $\A$, considering $\underline{u}^{Y}=(u,\id{})$, the structure $2$-cell $\alpha_{(u,\id{}),A}=\id{}$ will give us a commutative square
		\sq[n][3.8][9]{\HomC{\A}{A}{A}}{F((A,M(u)(Y)),A)}{\HomC{\A}{B}{A}}{F((B,Y),A)}{\alpha_{(A,M(u)(Y)),A}}{-\c u}{F((u,\id{}),A)}{\alpha_{(B,Y),A}}
		So, in order to reach the bijection we want, we define
		$$\alpha_{(B,Y),A}(u)\deq F((u,\id{}),A)\left(\eta_{(A,M(u)(Y))}\right).$$
		Given $\theta\:u\aR{} v\:B\to A$ in $\A$, considering
		$$\underline{\theta}^{Y}\:(u,M(\theta)_Y)\aR{}(v,\id{})\:(B,Y)\to (A,M(v)(Y))$$
		and using that $\alpha_{(v,\id{}),A}=\id{}$, we will have by the $2$-dimensional axiom of cartesian-marked oplax naturality that
		$$\alpha_{(B,Y),A}(\theta)=F(\underline{\theta}^Y,A)_{\h\alpha_{(A,M(v)(Y)),A}(\id{A})}\c {\left(\alpha_{(u,M(\theta)_Y),A}\right)}_{\id{A}}$$
		So we firstly define the components of the structure $2$-cells that express the cartesian-marked oplax naturality of $\alpha_{(B,Y),A}$ in $(B,Y)$ and then we will read how to define the action of $\alpha_{(B,Y),A}$ on morphisms $\theta$.
		
		Looking at the diagram of equation~\refs{diagramfromalphatoeta} applied to $(\id{B},\gamma)\:(B,Y)\to (B,Y')$ in $\Grothop{M}$, we see that, in order to have a bijection between the $\alpha$'s and the $\eta$'s, we need to define
		$${\left(\alpha_{(\id{B},\gamma),B}\right)}_{\id{B}}\deq \eta_{(\id{B},\gamma)}.$$
		Whence, given arbitrary $(g,\gamma)\:(B,Y)\to (B',Y')$ in $\Grothop{M}$ and $w\:B'\to A$ in $\A$, since 
		$$(w,\id{})\c (g,\gamma)=(\id{A},M(w)(\gamma))\c (w\c g,\id{}),$$ we need to define
		$${\left(\alpha_{(g,\gamma),A}\right)}_w\deq F((w\c g,\id{}),A)\left(\eta_{(\id{A},M(w)(\gamma))}\right).$$
		And at this point we define, by the argument above,
		$$\alpha_{(B,Y),A}(\theta)\deq F(\underline{\theta}^Y,A)_{\eta_{(A,M(v)(Y))}}\c F((u,\id{}),A)\left(\eta_{(\id{A},M(\theta)_Y)}\right)$$
		for every $\theta\:u\aR{}v\:B\to A$ in $\A$.
		
		Looking at the diagram of equation~\refs{diagramfromalphatoeta} applied to $(g,\id{})\:(B,Y)\to (B',M(g)(Y))$ in $\Grothop{M}$, we see that, in order to have the bijection we want, we need to define
		$${\left(\alpha_{(B,Y),g}\right)}_{\id{B}}\deq \eta_{(g,\id{})}.$$
		Whence, given an arbitrary $f\:A\to A'$ in $\A$ and $u\:K(B,Y)\to A$ in $\A$, by the compatibility axiom of $\oplaxn\mbox{-}\lax$ applied to $(u,\id{})\:(B,Y)\to (A,M(u)(Y))$ in $\Grothop{M}$ and $f\:A\to A'$ in $\A$, we need to define
		$${\left(\alpha_{(B,Y),f}\right)}_u\deq F((u,\id{}),A')\left(\eta_{(f,\id{})}\right).$$
		
		Now, we verify that such assignments work. To show that $\alpha_{(B,Y),A}$ is a functor, consider $u\aR{\theta} v\aR{\rho}w\:B\to A$ in $\A$. Then
		$$\alpha_{(B,Y),A}(\rho\c \theta)\deq F(\underline{(\rho\c \theta)}^Y,A)_{\eta_{(A,M(w)(Y))}}\c F((u,\id{}),A)\left(\eta_{(\id{A},M(\rho\c\theta)_Y)}\right)$$
		while $\alpha_{(B,Y),A}(\rho)\c\alpha_{(B,Y),A}(\theta)$ is equal to
		$$\scalebox{0.935}{$F(\underline{\rho}^Y,A)_{\eta_{(A,M(w)(Y))}}\c F((v,\id{}),A)\left(\eta_{(\id{A},M(\rho)_Y)}\right)\c F(\underline{\theta}^Y,A)_{\eta_{(A,M(v)(Y))}}\c F((u,\id{}),A)\left(\eta_{(\id{A},M(\theta)_Y)}\right)$}$$
		By the extraordinary naturality of $\eta$,
		$$\eta_{(\id{A},M(\rho\c \theta)_Y)}=F((\id{A},M(\theta)_Y),A)\left(\eta_{(\id{A},M(\rho)_Y)}\right)\c \eta_{(\id{A},M(\theta)_Y)}$$
		And by the uniqueness of the liftings of $2$-cells through $\groth{M}$, we have that
		$$\underline{(\rho\c \theta)}^Y=\underline{\rho}^Y\c (\id{A},M(\rho)_Y)\h \underline{\theta}^Y.$$
		So, by $2$-functoriality of $F$, it suffices to prove that
		$$F(\underline{\theta}^Y,A)_{F((\id{A},M(\rho)_Y),A)\left(\eta_{(A,M(w)_Y)}\right)}\c F((u,M(\theta)_Y),A)\left(\eta_{(\id{A},M(\rho)_Y)}\right)$$
		is equal to
		$$F((v,\id{}),A)\left(\eta_{(\id{A},M(\rho)_Y)}\right)\c F(\underline{\theta}^Y,A)_{\eta_{(A,M(v)(Y))}}.$$
		And this is true by naturality of $F(\underline{\theta}^Y,A)$.
		The fact that ${\left(\alpha_{(B,Y),f}\right)}_u$ is a natural transformation is checked with techniques similar to the above ones, noticing that $\underline{(f\h\theta)}^Y=(f,\id{})\h\underline{\theta}^Y$. Whereas showing that ${\left(\alpha_{(g,\gamma),A}\right)}_w$ is a natural transformation uses that for $(g,\gamma)\:(B,Y)\to (B',Y')$
		$$\underline{(\theta\h g)}^{Y}=\underline{\theta}^{M(g)(Y)}\h (g,\id{})\quad \text{and}\quad \underline{\theta}^{Y'}\h(\id{},\gamma)=(\id{},M(v)(\gamma))\h[1.5]\underline{\theta}^{M(g)(Y)}.$$
		At this point, it is straightforward to check that $\alpha_{(B,Y),A}$ is $\oplaxn\mbox{-}\lax$ in $((B,Y),A)$. And we immediately see that we obtain a bijection between the $\alpha$'s and the $\eta$'s.
		
		We extend such bijection to an isomorphism of categories. We send a modification $$\Theta_{(B,Y),A}\:\alpha_{(B,Y),A}\aR{}\beta_{(B,Y),A}\:\HomC{\A}{K(B,Y)}{A}\to F((B,Y),A)$$
		between $\oplaxn\mbox{-}\lax$ natural transformations in $((B,Y),A)$ to the modification
		\begin{cd}[2.5][6.5]
			1 \arrow[r,"{\id{B}}"]\&[-1ex]\HomC{\A}{B}{B}\arrow[r,bend left=24,"{\alpha_{(B,Y),B}}",""'{name=A}]\arrow[r,bend right=24,"{\beta_{(B,Y),B}}"',""{name=B}]\&F((B,Y),B)
			\arrow[Rightarrow,from=A,to=B,shift right=0.5ex,"{\Theta_{(B,Y),B}}"{description,pos=0.46},shorten <=0.15ex,shorten >=0ex]
		\end{cd}
		between extraordinary lax natural transformations. We then send a modification
		$$\Gamma_{(B,Y)}\:\eta_{(B,Y)}\aR{}{\eta'}_{(B,Y)}\:1\to F((B,Y),B)$$
		between extraordinary lax natural transformations to the modification $\Theta$ between $\oplaxn\mbox{-}\lax$ natural transformations such that for every $u\:B\to A$ in $\A$
		$${\left(\Theta_{(B,Y),A}\right)}_{u}\deq F((u,\id{}),A)\left(\Gamma_{(A,M(u)(Y))}\right).$$
		The two assignments are clearly functorial and inverses of each other.
	\end{proof}
	
		We are now ready to show that a pointwise left Kan extension in $\twoCATlax$ along a discrete $2$-opfibration is always a weak left Kan extension.
	
	\begin{prop}\label{pointkanisalsoweak}
		Consider a diagram
		\begin{cd}[5][4]
			\B \arrow[rr,"{F}"]\arrow[d,"{K}"']\&\arrow[dl,Rightarrow,shift right=0.75ex,"{\lambda}"{pos=0.355, inner sep= 0.35ex},shorten <=1.45ex,shorten >= 3ex]\& \C \\
			\A\arrow[rru,"L"']
		\end{cd}
		in $\twoCATlax$ with $\B$ small and $K$ a discrete $2$-opfibration. Assume that $\lambda$ exhibits $L=\Lan{K}{F}$ \pteor{in the sense of \defx\ref{defpointkanextensionlaxthreecat}}. Then $\lambda$ also exhibits $L=\lan{K}{F}$.
	\end{prop}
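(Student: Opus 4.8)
The plan is to reduce, by \remx\ref{remhomtwocategoriestwocatlax}, to producing for each $2$-functor $U\:\A\to\C$ an isomorphism of categories $\HomC{\mlax{\A}{\C}}{L}{U}\iso\HomC{\mlax{\B}{\C}}{F}{U\c K}$ and checking that it is given by pasting with $\lambda$ (the $2$-naturality in $U$ being automatic by \defx\ref{defweakkanextensionlaxthreecat}). As in \defx\ref{defpointkanextensionlaxthreecat} I may assume $K=\groth{M}\:\Grothop{M}\to\A$ for a $2$-functor $M\:\A\to\CAT$, with $\lambda$ cartesian-marked lax with respect to $M$. Fixing $U$, I would introduce the $2$-functor $G\:\B\op\times\A\to\CAT$, $G(B,A)\deq\HomC{\C}{F(B)}{U(A)}$ (a genuine $2$-functor since $F$ and $U$ are), and obtain the isomorphism as a composite
$$\HomC{\mlax{\A}{\C}}{L}{U}\;\iso\;\mathcal{E}\;\iso\;\HomC{\mlax{\B}{\C}}{F}{U\c K},$$
where $\mathcal{E}$ denotes the category of $\oplaxn\mbox{-}\lax$ natural transformations (\defx\ref{defoplaxnlax}) from $\HomC{\A}{K(-)}{=}$ to $G$ and their modifications.

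For the first isomorphism I would use the pointwise property. A lax natural transformation $\phi\:L\alax{}U$ consists of components $\phi_A\:L(A)\to U(A)$ together with structure $2$-cells $\phi_a\:U(a)\c\phi_A\aR{}\phi_{A'}\c L(a)$ for $a\:A\to A'$ in $\A$. I would send $\phi$ to the $\oplaxn\mbox{-}\lax$ natural transformation $\alpha^{\phi}$ from $\HomC{\A}{K(-)}{=}$ to $G$ determined by
$$\alpha^{\phi}_{B,A}(u)\deq\phi_A\c L(u)\c\lambda_B,\qquad\big(\alpha^{\phi}_{B,a}\big)_{u}\deq\phi_a\h L(u)\h\lambda_B\qquad(u\:K(B)\to A\text{ in }\A),$$
so that $\alpha^{\phi}_{-,A}$ is the cartesian-marked oplax natural transformation obtained by post-composing the universal cocylinder of~\refs{eqpointkanextensiontwocatlax} with $\phi_A\c-$, and the structure $2$-cells expressing lax naturality in $A$ are whiskerings of the $\phi_a$. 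Cartesian-marked oplax naturality in $B$ is exactly the statement that $\alpha^{\phi}_{-,A}$ lies in $\moplaxn{\B\op}{\CAT}$, which is granted by \defx\ref{defpointkanextensionlaxthreecat}. For the inverse, the pointwise isomorphism of \defx\ref{defpointkanextensionlaxthreecat} is $2$-natural in the $\C$-variable and in $A$: $2$-naturality in the $\C$-variable recovers $\phi_A$ from $\alpha^{\phi}_{-,A}$, and $2$-naturality in $A$ converts a structure $2$-cell for lax naturality in $A$ back into a $2$-cell $U(a)\c\phi_A\aR{}\phi_{A'}\c L(a)$, giving $\phi_a$. It remains to verify that the lax naturality axioms of $\phi$ correspond precisely to the compatibility axiom of \defx\ref{defoplaxnlax} together with the modification conditions, and that modifications match on both sides; this yields $\HomC{\mlax{\A}{\C}}{L}{U}\iso\mathcal{E}$.

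For the second isomorphism I would apply \thex\ref{parametrizedyonedaoplaxnlax} (the $\oplaxn\mbox{-}\lax$ parametrized Yoneda lemma) with the discrete $2$-opfibration $K$ and the $2$-functor $G$ in the role of its ``$F$'': this identifies $\mathcal{E}$ with the category of families $\eta_B\:1\to G(B,K(B))=\HomC{\C}{F(B)}{U(K(B))}$ extraordinary lax natural in $B$, with their modifications. Since the variable $B$ enters $\HomC{\C}{F(B)}{U(K(B))}$ contravariantly through $F$ and covariantly through $U\c K$, an extraordinary lax natural family $\eta$ is precisely the data of a lax natural transformation $F\alax{}U\c K$ — components $\eta_B\:F(B)\to U(K(B))$, a structure $2$-cell on each morphism of $\B$, and the usual coherence — and likewise for modifications, so $\mathcal{E}\iso\HomC{\mlax{\B}{\C}}{F}{U\c K}$. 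Tracing $\phi$ through both isomorphisms, the resulting $\eta$ has $B$-component $\eta_B=\alpha^{\phi}_{B,K(B)}(\id{K(B)})=\phi_{K(B)}\c\lambda_B$, which is exactly the $B$-component of the lax natural transformation obtained by whiskering $\phi$ with $K$ and composing with $\lambda$; the analogous check on structure $2$-cells and modifications shows the composite is pasting with $\lambda$, hence $\lambda$ exhibits $L=\lan{K}{F}$ in the sense of \defx\ref{defweakkanextensionlaxthreecat}.

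The hard part will be the bookkeeping inside the two translations: checking that the lax naturality axioms of $\phi\:L\alax{}U$ match, on the nose, the compatibility axiom and the modification conditions of the $\oplaxn\mbox{-}\lax$ structure of \defx\ref{defoplaxnlax}, and that extraordinary lax naturality of $\eta_B$ in $B$ coincides with lax naturality of $F\alax{}U\c K$. These are long but essentially routine diagram chases; the one genuinely delicate point is threading the cartesian marking correctly throughout, which is done using the explicit description of $\groth{M}$ and of cartesian-marked oplax naturality recalled in Section~\ref{sectionkanextensions}.
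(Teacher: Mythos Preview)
Your proposal is correct and follows essentially the same route as the paper: first identify $\HomC{\mlax{\A}{\C}}{L}{U}$ with the category of $\oplaxn\mbox{-}\lax$ natural transformations $\HomC{\A}{K(-)}{=}\to\HomC{\C}{F(-)}{U(=)}$ via the pointwise universal property, then apply \thex\ref{parametrizedyonedaoplaxnlax} to pass to extraordinary lax naturals $1\to\HomC{\C}{F(B)}{U(K(B))}$, and finally identify these with $\HomC{\mlax{\B}{\C}}{F}{U\c K}$ (the paper cites Hirata for this last step), checking the composite is pasting with $\lambda$. Your write-up is somewhat more explicit about the formulas and the bookkeeping, but the argument is the same.
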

	\begin{proof}
		Since $L=\Lan{K}{F}$, for every $C$, the $2$-cell
		\begin{equation*}
			\HomC{\A}{K(-)}{A}\aRR{L}\HomC{\C}{(L\c K)(-)}{L(A)}\aRR[\oplaxn]{\HomC{\C}{\lambda_{-}}{\id{}}}\HomC{\C}{F(-)}{L(A)};
		\end{equation*}
		is $2$-universal, giving an isomorphism of categories
		\begin{equation}\label{eqisomorphismpointkanextnelladimcheeweak}
			\HomC{\C}{L(A)}{C}\arr{}\HomC{\moplaxn{\B\op}{\CAT}}{\HomC{\A}{K(-)}{A}}{\HomC{\C}{F(-)}{C}}
		\end{equation}
		
		We need to prove that, for every $U\in \mlax{\A}{\C}$, pasting with $\lambda$ gives an isomorphism of categories
		\begin{equation*}
			\HomC{\mlax{\A}{\C}}{L}{U}\iso \HomC{\mlax{\B}{\C}}{F}{U\c K}.
		\end{equation*}
		The category $\HomC{\mlax{\A}{\C}}{L}{U}$ is isomorphic to the category of $\oplaxn\mbox{-}\lax$ natural transformations
		$$\alpha_{B,A}\:{\HomC{\A}{K(B)}{A}}\aoplaxn{}{\HomC{\C}{F(B)}{U(A)}}$$
		in $(B,A)\in \B\op\times\A$ and modifications between them, thanks to the functoriality of the isomorphism of equation~\refs{eqisomorphismpointkanextnelladimcheeweak}. Indeed, consider a lax natural transformation $\phi\:L\alax{} U$. For every $A\in \A$, the component $\phi_A\:L(A)\to U(A)$ corresponds to a cartesian-marked oplax natural transformation
		$$\alpha_{-,A}\:{\HomC{\A}{K(-)}{A}}\aoplaxn{}{\HomC{\C}{F(-)}{U(A)}}$$
		via the isomorphism of equation~\refs{eqisomorphismpointkanextnelladimcheeweak}. And $\phi_A$ being lax natural in $A\in \A$ precisely corresponds to the cartesian-marked oplax natural transformations $\alpha_{-,A}$ being lax natural in $\A$, with structure $2$-cell on $f\:A\to A'$ in $\A$ given by the image of $\phi_f$ through the isomorphism of equation~\refs{eqisomorphismpointkanextnelladimcheeweak}. And analogously for the modifications.
		
		By \thex\ref{parametrizedyonedaoplaxnlax}, the category $\HomC{\mlax{\A}{\C}}{L}{U}$ is then isomorphic to the category of extraordinary lax natural transformations
		$$1\to \HomC{\C}{F(B)}{U(K(B))}$$
		in $B\in \B$ and modifications between them, which is isomorphic (for example, by Hirata's paper~\cite{hirata_notesonlaxends}) to $\HomC{\mlax{\B}{\C}}{F}{U\c K}$. And we can read that the composite isomorphism is given by pasting with $\lambda$.
	\end{proof}

	\section{Application to the 2-category of elements}\label{sectiontwosetgrothconstr}
	
	In this section, we explore in detail the $2$-category of elements (\defx\ref{defexpltwosetgroth}) from an abstract point of view. Our motivation is to introduce, in future work, an enriched version of Grothendieck fibrations and of the Grothendieck construction. As giving an explicit definition of enriched fibration is quite hard, we would like to capture Grothendieck fibrations and the Grothendieck construction from an abstract point of view and try to generalize such abstract theory. It is also useful to understand the connections between the various properties of the $2$-category of elements (and of the usual Grothendieck construction).
	
	In \thex\ref{grothconstrislaxcomma}, we show that the $2$-category of elements can be captured by a \predfn{lax comma object in $\twoCATlax$}, as defined here in \defx\ref{completeunivproplaxcomma}. This is original, generalizing a known result due to Bird \cite{bird_limitsintwocatoflocpresented} (which appears also in Gray's~\cite{gray_formalcattheory} and Descotte, Dubuc and Szyld's~\cite{descottedubucszyld_sigmalimandflatpseudofun}). The actual difference between our result and Bird's one is that ours allow to consider also lax natural transformations in the $2$-dimensional part of the universal property. This also solves a mismatch in Gray's~\cite{gray_formalcattheory} lax commas. See below for a thorough comparison with the existing literature. 
	
	We explain how our work has potential applications to higher dimensional elementary topos theory. Indeed, in our opinion, the $2$-category of elements should be seen as the archetypal $3$-dimensional classification process, exhibiting $\twoCATlax$ as the archetypal elementary $3$-topos.
	
	In \thex\ref{teorpointkanextforgroth}, we prove a pointwise Kan extension result for the $2$-category of elements, using our original definition of pointwise Kan extension in $\twoCATlax$ (\defx\ref{defpointkanextensionlaxthreecat}). We then show that this result implies many other properties of the $2$-category of elements, also thanks to \prox\ref{pointkanisalsoweak} (a pointwise Kan extension in $\twoCATlax$ is a weak one as well). Among the properties implied, we find the conicalization of weighted $2$-limits (\thex\ref{redlaxnormalconical}) and the $2$-fully faithfulness of the $2$-functor that calculates the $2$-category of elements (that completes \thex\ref{essentialimagegivenbytwosetopf} to $2$-equivalences between $2$-copresheaves and discrete $2$-opfibrations).

	\begin{rem}\label{nextstepislaxcomma}
		\prox\ref{laxnatinsidegrothconstr} gives a cartesian-marked lax natural transformation $\lambda$ of the form 
		\begin{eqD*}
			\sq*[l][5][5][\laxn][2.7][2.2][0.52]{\Grothop{F}}{\1}{\B}{\CAT}{}{\groth{F}}{\1}{F}
		\end{eqD*}
		Bird showed in~\cite{bird_limitsintwocatoflocpresented} that this square exhibits a lax comma. The notion of lax comma used by Bird has been introduced by Gray in~\cite{gray_formalcattheory} and has then been unravelled by Kelly in~\cite{kelly_clubsanddoctrines}. However they did not provide a complete universal property suitable to the lax $3$-categorical ambient $\twoCATlax$. Lambert attempted in~\cite{lambert_discretetwofib} to give a better universal property than the one of Gray and Kelly, but without stating any uniqueness condition in the $2$-dimensional part and only giving a partial $3$-dimensional part. We present in \defx\ref{completeunivproplaxcomma} (see also \prox\ref{laxcommaisobject}) a complete universal property of the lax comma object, that refines both the ones of Gray (and Kelly) and Lambert.
		
		In order to distinguish the explicit definition (given in Gray's~\cite{gray_formalcattheory}) from the complete universal property of the lax comma object, we will call the former ``\predfn{lax comma}" and the latter ``\predfn{lax comma object in $\twoCATlax$}". However, we will use the same symbol for both; this is justified by \prox\ref{laxcommaisobject}.
	\end{rem}
	
	\begin{defne}[Gray~\cite{gray_formalcattheory}]
		Let $F\:\A\to \C$ and $G\:\B\to \C$ be $2$-functors. The \dfn{lax comma from $F$ to $G$} is the $2$-category $\laxcomma{F}{G}$ that is given by the following data:
		\begin{description}
			\item[an object of $\laxcomma{F}{G}$] is a triple $(A,B,h)$ with $A\in \A$, $B\in \B$ and $h\:F(A)\to G(B)$ a morphism in $\C$;
			\item[a $1$-cell $(A,B,h)\to (A',B',h')$ in $\laxcomma{F}{G}$] is a triple $(f,g,\phi)$ with $f\:A\to A'$ in $\A$, $g\:B\to B'$ in $\B$ and
			\sq[l][5][5][\phi][2.3][2.3]{F(A)}{G(B)}{F(A')}{G(B')}{h}{F(f)}{G(g)}{h'}
			a $2$-cell in $\C$;
			\item[a $2$-cell $(f,g,\phi)\aR{}(f',g',\phi')\:(A,B,h)\to (A',B',h')$] is a pair $(\alpha,\beta)$ with $\alpha\:f\aR{}f'$ in $\A$ and $\beta\:g\aR{}g'$ in $\B$ such that
			\twonats[1.95][7.8][6.7][1.05][3.45][2]{F(A)}{G(B)}{F(A')}{G(B')}{F(f')}{F(f)}{G(g')}{G(g)}{h}{h'}{F(\alpha)}{G(\beta)}{\phi}{\phi'}
			\item[the composition] of $1$-cells is given by pasting and that of $2$-cells is inherited by the ones in $\A$ and $\B$.
		\end{description}
		
		The \dfn{oplax comma from $F$ to $G$} is the co of the lax comma from $F\co$ to $G\co$.
	\end{defne}
	
	The following proposition shows the partial universal property of the lax comma object presented by Gray in~\cite{gray_formalcattheory}.
	
	\begin{prop}[Gray~\cite{gray_formalcattheory}]\label{univproplaxcommagray}
		Let $F\:\A\to \C$ and $G\:\B\to \C$ be $2$-functors. The lax comma from $F$ to $G$ is equivalently given by the enriched conical limit in $\twoCAT$ of the diagram
		\begin{cd}[3][3.5]
			\A \arrow[rd,"{F}"']\&\& \ARoplax{\C} \arrow[rd,"{\cod}"] \arrow[ld,"{\dom}"']\&\& \B\arrow[ld,"{G}"]\\
			\& \C \&\& \C
		\end{cd}
		where $\ARoplax{\C}$ is the lax comma from $\Id{\C}$ to $\Id{\C}$.
	\end{prop}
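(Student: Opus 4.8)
The plan is to reduce the statement to the elementary fact that $\twoCAT$-enriched conical limits are computed \emph{levelwise} — separately on objects, $1$-cells and $2$-cells — and then to unwind the resulting explicit description and compare it, degree by degree, with the definition of $\laxcomma{F}{G}$ given above.

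First I would recall that the three functors $\twoCAT\to\Set$ sending a $2$-category to its set of objects, to its set of $1$-cells, and to its set of $2$-cells are representable: respectively by the terminal $2$-category $\1$, by the walking-arrow category $\2$, and by the free-living $2$-cell, i.e.\ the $2$-category with two objects $0,1$ and a single nonidentity $2$-cell between two parallel $1$-cells $0\to 1$. Hence all three preserve limits, and since $\twoCAT$ is complete, the $\twoCAT$-enriched conical limit of any small diagram is the $2$-category whose objects (resp.\ $1$-cells, resp.\ $2$-cells) form the corresponding $\Set$-limit. Applied to the $W$-shaped diagram in the statement, the conical limit $P$ is therefore the iterated strict pullback
\[
	P\;\iso\;\A\x[\C]\ARoplax{\C}\x[\C]\B ,
\]
computed levelwise along $F,\dom,\cod,G$, where $\dom,\cod\:\ARoplax{\C}\to\C$ are the two projections out of the lax comma $\laxcomma{\Id{\C}}{\Id{\C}}$. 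Thus an object of $P$ is a triple $(A,\gamma,B)$ with $A\in\A$, $B\in\B$ and $\gamma$ an object of $\ARoplax{\C}$ such that $\dom(\gamma)=F(A)$ and $\cod(\gamma)=G(B)$, and analogously on $1$- and $2$-cells.

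Next I would identify $P$ with $\laxcomma{F}{G}$ in each dimension. An object of $\ARoplax{\C}$ with prescribed source $F(A)$ and target $G(B)$ is precisely a morphism $h\:F(A)\to G(B)$ in $\C$, so objects of $P$ correspond to triples $(A,B,h)$, i.e.\ to objects of $\laxcomma{F}{G}$. A $1$-cell of $\ARoplax{\C}$ from $h$ to $h'$ is a triple $(u,v,\phi)$ whose $\phi$ is the coherence $2$-cell filling the square with sides $h,h',u,v$; prescribing $u=F(f)$ and $v=G(g)$ leaves exactly the datum $\phi$, that is, a $1$-cell $(f,g,\phi)$ of $\laxcomma{F}{G}$. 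A $2$-cell of $\ARoplax{\C}$ is a pair $(\sigma,\tau)$ subject to a pasting compatibility; prescribing $\sigma=F(\alpha)$ and $\tau=G(\beta)$ turns that compatibility into precisely the axiom defining a $2$-cell $(\alpha,\beta)$ of $\laxcomma{F}{G}$. Because composition and identities in $P$ are levelwise and composition in $\ARoplax{\C}$ is pasting, these bijections assemble into an isomorphism of $2$-categories $P\iso\laxcomma{F}{G}$, and one checks directly that it is compatible with the limiting cone — whose legs are the obvious projections $\laxcomma{F}{G}\to\A$ and $\laxcomma{F}{G}\to\B$ together with the ``filled square'' functor $\laxcomma{F}{G}\to\ARoplax{\C}$ — so that $\laxcomma{F}{G}$ indeed serves as the $\twoCAT$-enriched conical limit.

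The only genuinely delicate point, which I expect to be the main obstacle, is the bookkeeping of variances: one must make sure that the handedness of the arrow $2$-category $\ARoplax{\C}=\laxcomma{\Id{\C}}{\Id{\C}}$ and the direction of its coherence $2$-cells match the conventions fixed in the definition of $\laxcomma{F}{G}$, so that the $\phi$ appearing in a $1$-cell of $\ARoplax{\C}$ is oriented the same way as the $\phi$ appearing in a $1$-cell of $\laxcomma{F}{G}$, and similarly for the ``$\mathrm{oplax}$'' label and for the $2$-cell compatibility. Once these orientations are pinned down, everything else is a routine, if slightly tedious, levelwise verification.
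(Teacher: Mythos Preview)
The paper does not actually prove this proposition: it is stated with attribution to Gray and then immediately superseded by the refined universal property of \defx\ref{completeunivproplaxcomma}. So there is no ``paper's own proof'' to compare against.

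Your argument is correct and is the standard way one would establish the result. One small refinement: your representability argument shows that the \emph{underlying} $1$-categorical limit of the $W$-shaped diagram is computed levelwise as the iterated strict pullback $\A\x[\C]\ARoplax{\C}\x[\C]\B$. To conclude that this is the \emph{$\Cat$-enriched} conical limit (which is what ``enriched conical limit in $\twoCAT$'' means here), you should add a sentence observing either that $\twoCAT$ is cotensored over $\Cat$, so ordinary conical limits are automatically enriched ones, or equivalently that the hom-\emph{categories} (not just hom-sets) into the pullback are computed as the corresponding pullbacks of hom-categories. This is routine but worth stating, since the proposition explicitly speaks of the enriched limit.

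Your caution about variances is well placed but not a genuine obstacle: the paper's notation $\ARoplax{\C}$ is defined precisely as $\laxcomma{\Id{\C}}{\Id{\C}}$, so by construction the orientation of the filling $2$-cell in a $1$-cell of $\ARoplax{\C}$ agrees with that in the definition of $\laxcomma{F}{G}$, and the $2$-cell compatibility axiom is literally the same diagram specialised to $F=G=\Id{\C}$. Once you note this, the levelwise identification goes through without further subtlety.
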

	
		But there is a better universal property that the lax comma satisfies. Indeed, it is a \predfn{lax comma object in the lax $3$-category $\twoCATlax$}, as originally defined here.
	
	\begin{defne}\label{completeunivproplaxcomma}
		Let $\Q$ be a lax $3$-category and consider $1$-cells $F\:\A\to \C$ and $G\:\B\to \C$ in $\Q$. The \dfn{lax comma object in $\Q$ from $F$ to $G$} is, if it exists, an object $\laxcomma{F}{G}\in \Q$ together with a $2$-cell
		\begin{cd}[5.8][5.8]
			\laxcomma{F}{G} \arrow[r,"{\partial_0}"]\arrow[d,"{\partial_1}"']\& \A\arrow[d,"{F}"]\arrow[ld,Rightarrow,"{\lambda}",shorten <= 2.7ex, shorten >=2.2ex]\\
			\B\arrow[r,"{G}"']\& \C
		\end{cd}
		in $\Q$ that is universal in the following lax $3$-categorical sense:
		\begin{enum}
			\item for every $2$-cell $\gamma\:F\c P\aR{}G\c Q\:\M\to \C$, there exists a unique $1$-cell $V\:\M\to \laxcomma{F}{G}$ such that
			\begin{eqD*}
				\begin{cd}*[5][5]
					\M\arrow[rrd,bend left,"{P}"]\arrow[rdd,bend right,"{Q}"']\\[-5ex]
					\&[-5ex]\& \A\arrow[d,"{F}"]\arrow[ld,Rightarrow,shift right=3.4ex,"{\gamma}"{pos=0.53},shorten <= 2.2ex, shorten >=1.4ex]\\
					\&\B\arrow[r,"{G}"']\& \C
				\end{cd}\quad = \quad
				\begin{cd}*[6][6]
					\M\arrow[rrd,bend left,"{P}"]\arrow[rdd,bend right,"{Q}"']\arrow[rd,"{V}",dashed,shorten <=-0.2ex,shorten >=-0.2ex]\\[-5ex]
					\&[-5ex]\laxcomma{F}{G} \arrow[r,"{\partial_0}"]\arrow[d,"{\partial_1}"']\& \A\arrow[d,"{F}"]\arrow[ld,Rightarrow,"{\lambda}",shorten <= 2.7ex, shorten >=2.2ex]\\
					\&\B\arrow[r,"{G}"']\& \C
				\end{cd}
			\end{eqD*}
			\item for every $1$-cells $V,W\:\M\to \laxcomma{F}{G}$ and every $3$-cell
			\begin{eqD*}
				\begin{cd}*[6][6]
					\M\arrow[dd,"{W}"']\arrow[rd,"{V}"]\\[-5ex]
					\&[-4ex]\laxcomma{F}{G}\arrow[ld,Rightarrow,"{\Delta}",shorten <=0ex, shorten >=0ex] \arrow[r,"{\partial_0}"]\arrow[dd,"{\partial_1}"']\& \A\arrow[dd,"{F}"]\arrow[ldd,Rightarrow,"{\lambda}",shorten <= 2.7ex, shorten >=2.2ex]\\[-5ex]
					\laxcomma{F}{G}\arrow[rd,"{\partial_1}"']\\[-5ex]
					\&\B\arrow[r,"{G}"']\& \C
				\end{cd} \quad\aMM{\Xi}\quad
				\begin{cd}*[6][6]
					\M\arrow[rr,"{V}"]\arrow[rd,"{W}"']\&[-5ex]\&[-7ex] \laxcomma{F}{G}\arrow[rd,"{\partial_0}"]\arrow[ld,Rightarrow,"{\Gamma}"]\\[-4ex]
					\&\laxcomma{F}{G} \arrow[rr,"{\partial_0}"]\arrow[d,"{\partial_1}"']\&\&[-5ex]\A\arrow[d,"{F}"]\arrow[lld,Rightarrow,"{\lambda}",shorten <= 2.7ex, shorten >=2.2ex]\\
					\&\B\arrow[rr,"{G}"']\&\& \C
				\end{cd}
			\end{eqD*}
			for $2$-cells $\Gamma$ and $\Delta$, there exists a unique $2$-cell $\nu\:V\aR{}W$ such that
			$$\partial_0\h \nu=\Gamma, \quad \partial_1\h \nu=\Delta, \quad \lambda_{\nu}=\Xi;$$
			notice that we are precisely asking that $\Xi$ corresponds to the $3$-cell given by the lax interchange rule in $\Q$ of 
			\begin{cd}[6][6]
				\M\arrow[r,bend left,"{V}"{pos=0.57},""'{name=A}]\arrow[r,bend right,"{W}"'{pos=0.57},""{name=B}]\&\laxcomma{F}{G} \arrow[r,bend left,"{F\c \partial_0}"{pos=0.43},""'{name=C}]\arrow[r,bend right,"{G\c \partial_1}"'{pos=0.43},""{name=D}]\&\C;
				\arrow[from=A,to=B,Rightarrow,"{\nu}"]
				\arrow[from=C,to=D,Rightarrow,"{\lambda}"]
			\end{cd}
			\item for every $2$-cells $\nu,\omega\:V\aR{}W\:\M\to \laxcomma{F}{G}$ and every pair of $3$-cells $\Phi\:\partial_0\h \nu\aM{}\partial_0\h \omega$ and $\Psi\:\partial_1\h \nu\aM{}\partial_1\h \omega$ such that
			\begin{eqD}{tridimpartcompleteunivproplaxcomma}
				\lambda_{\omega}\c
				\begin{cd}*[6.8][6.8]
					\M\arrow[dd,"{W}"']\arrow[rd,"{V}"]\\[-5ex]
					\&[-4ex]\laxcomma{F}{G}\arrow[ld,Rightarrow,bend right,"{\partial_1\h \nu}"'{pos=0.42},""'{name=A},shorten <=0.4ex, shorten >=0.4ex]\arrow[ld,Rightarrow,bend left,"{\partial_1\h \omega}"{pos=0.58},""{name=B},shorten <=0.4ex, shorten >=0.4ex] \arrow[r,"{\partial_0}"]\arrow[dd,"{\partial_1}"]\& \A\arrow[dd,"{F}"]\arrow[ldd,Rightarrow,"{\lambda}",shorten <= 2.7ex, shorten >=2.2ex]\\[-5ex]
					\laxcomma{F}{G}\arrow[rd,"{\partial_1}"']\\[-5ex]
					\&\B\arrow[r,"{G}"']\& \C
					\arrow[from=A,to=B,triple,"{\Psi}",shorten <=1.1ex,shorten >=0.9ex]
				\end{cd} \quad=\quad
				\begin{cd}*[6.8][6.8]
					\M\arrow[rr,"{V}"]\arrow[rd,"{W}"']\&[-5ex]\&[-7ex] \laxcomma{F}{G}\arrow[rd,"{\partial_0}"]\arrow[ld,Rightarrow,bend right,"{\partial_0\h \nu}"'{pos=0.58},""'{name=C},shorten <=0.4ex,shorten >=0.4ex]\arrow[ld,Rightarrow,bend left,"{\partial_0\h\omega}"{pos=0.42},""{name=D},shorten <=0.4ex,shorten >=0.4ex]\\[-4ex]
					\&\laxcomma{F}{G} \arrow[rr,"{\partial_0}"']\arrow[d,"{\partial_1}"']\&\&[-5ex]\A\arrow[d,"{F}"]\arrow[lld,Rightarrow,"{\lambda}",shorten <= 2.7ex, shorten >=2.2ex]\\
					\&\B\arrow[rr,"{G}"']\&\& \C
					\arrow[from=C,to=D,triple,"{\Phi}",shorten <=1.1ex,shorten >=0.9ex]
				\end{cd}\c \lambda_{\nu}
			\end{eqD}
			there exists a unique $3$-cell $\Theta\:\nu\aM{}\omega$ such that $\partial_0\h \Theta=\Phi$ and $\partial_1\h \Theta=\Psi$.
		\end{enum}
	\end{defne}

	\begin{prop}\label{laxcommaisobject}
		Let $F\:\A\to \C$ and $G\:\B\to \C$ be $2$-functors. There is a lax natural transformation
		\begin{cd}[5.8][5.8]
			\laxcomma{F}{G} \arrow[r,"{\partial_0}"]\arrow[d,"{\partial_1}"']\& \A\arrow[d,"{F}"]\arrow[ld,Rightarrow,"{\lambda}",shorten <= 2.7ex, shorten >=2.2ex]\\
			\B\arrow[r,"{G}"']\& \C
		\end{cd}
		that exhibits the lax comma $\laxcomma{F}{G}$ as the lax comma object in $\twoCATlax$ from $F$ to $G$. Moreover, in the condition $(ii)$ of lax comma object in $\twoCATlax$, if $\Gamma$ and $\Delta$ \pteor{that can be lax natural} are both strict $2$-natural \pteor{resp. pseudo-natural} then also $\nu$ is so.
	\end{prop}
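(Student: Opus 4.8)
The plan is to verify directly the three conditions of \defx\ref{completeunivproplaxcomma}, and then the refinement, by transcribing componentwise the explicit description of $\laxcomma{F}{G}$. First I record the data of $\lambda$: on an object $(A,B,h)$ of $\laxcomma{F}{G}$ its component is $\lambda_{(A,B,h)}\deq h$, and on a $1$-cell $(f,g,\phi)$ its structure $2$-cell is $\lambda_{(f,g,\phi)}\deq \phi$. With these choices the lax naturality axioms of $\lambda$ (unit, composition, and compatibility with $2$-cells of $\laxcomma{F}{G}$) are literally the rules that define composition of $1$-cells and of $2$-cells in $\laxcomma{F}{G}$, so $\lambda$ is a well-defined lax natural transformation of the stated shape.

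For condition $(i)$, given a lax natural transformation $\gamma\:F\c P\aR{}G\c Q\:\M\to\C$, I would define $V\:\M\to\laxcomma{F}{G}$ by $V(M)\deq (P(M),Q(M),\gamma_M)$, $V(m)\deq (P(m),Q(m),\gamma_m)$ and $V(\mu)\deq (P(\mu),Q(\mu))$. Here $\gamma_m$ has exactly the type required of the third coordinate of a $1$-cell $V(M)\to V(M')$ in $\laxcomma{F}{G}$; the pair $(P(\mu),Q(\mu))$ satisfies the pasting condition defining $2$-cells of $\laxcomma{F}{G}$ precisely because this is the compatibility-with-$2$-cells axiom of the lax natural transformation $\gamma$; and $2$-functoriality of $V$ reduces to that of $P$ and $Q$ together with the unit and composition axioms for the structure $2$-cells of $\gamma$. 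By construction $\partial_0\c V=P$, $\partial_1\c V=Q$ and $\lambda\h V=\gamma$, and conversely reading off these three identities forces the above values of $V$, giving uniqueness.

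Conditions $(ii)$ and $(iii)$ would be handled in the same spirit. In $(ii)$, with $\Gamma\:\partial_0\h V\aR{}\partial_0\h W$, $\Delta\:\partial_1\h V\aR{}\partial_1\h W$ the prescribed $2$-cells and $\Xi$ the given modification, I would set $\nu_M\deq (\Gamma_M,\Delta_M,\Xi_M)$ and $\nu_m\deq (\Gamma_m,\Delta_m)$; the requirement that $\Xi$ be the modification produced by the lax interchange of $\nu$ and $\lambda$ unwinds, via $\lambda_{(f,g,\phi)}=\phi$, to the statement that $\Xi_M$ fills the square required of a $1$-cell $V(M)\to W(M)$, so each $\nu_M$ is legitimate; the well-definedness of $\nu_m$ as a $2$-cell of $\laxcomma{F}{G}$ and the lax naturality of $\nu$ then follow from the modification axiom of $\Xi$ together with the lax naturality axioms of $\Gamma$ and $\Delta$; and uniqueness is read off from $\partial_0\h\nu=\Gamma$, $\partial_1\h\nu=\Delta$, $\lambda_\nu=\Xi$. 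In $(iii)$, given $\nu,\omega\:V\aR{}W$ and modifications $\Phi\:\partial_0\h\nu\aM{}\partial_0\h\omega$, $\Psi\:\partial_1\h\nu\aM{}\partial_1\h\omega$ satisfying equation~\refs{tridimpartcompleteunivproplaxcomma}, I would set $\Theta_M\deq (\Phi_M,\Psi_M)$: that equation read componentwise is exactly the condition making $(\Phi_M,\Psi_M)$ a $2$-cell $\nu_M\aR{}\omega_M$ of $\laxcomma{F}{G}$, the modification axiom for $\Theta$ comes from those for $\Phi$ and $\Psi$, and uniqueness follows from $\partial_0\h\Theta=\Phi$, $\partial_1\h\Theta=\Psi$.

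For the refinement, observe that by construction the structure $2$-cell of $\nu$ on a morphism $m\:M\to M'$ of $\M$ is the pair $(\Gamma_m,\Delta_m)$, which is an identity, resp.\ invertible, $2$-cell of $\laxcomma{F}{G}$ exactly when both $\Gamma_m$ and $\Delta_m$ are so; hence $\nu$ is strict $2$-natural, resp.\ pseudo-natural, as soon as $\Gamma$ and $\Delta$ are, the datum $\Xi$ being irrelevant here since it enters only through the $1$-cell components $\nu_M$ and not through the structure $2$-cells of $\nu$. The only genuinely delicate point of the argument is the interplay between the lax interchange $3$-cell of $\twoCATlax$ in condition $(ii)$ and the third coordinate of $\nu_M$, together with the verification that $(\Gamma_m,\Delta_m)$ really is a $2$-cell of $\laxcomma{F}{G}$ from $W(m)\c\nu_M$ to $\nu_{M'}\c V(m)$; once the bookkeeping of the structure $2$-cells of $\lambda$, $\Gamma$ and $\Delta$ is organized, the rest is a routine componentwise transcription of the definition of the lax comma.
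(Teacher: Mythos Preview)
Your proof is correct and follows essentially the same approach as the paper's own proof: a direct componentwise verification of the three conditions of \defx\ref{completeunivproplaxcomma}, with the same formulas $\lambda_{(A,B,h)}=h$, $\lambda_{(f,g,\phi)}=\phi$, $V(M)=(P(M),Q(M),\gamma_M)$, $\nu_M=(\Gamma_M,\Delta_M,\Xi_M)$, $\nu_m=(\Gamma_m,\Delta_m)$, and $\Theta_M=(\Phi_M,\Psi_M)$. If anything, you are slightly more explicit than the paper in recording the action of $V$ on $2$-cells and in isolating which axiom justifies each step.
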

	\begin{proof}
		Firstly, we construct $\lambda$. Given $(A,B,h)\in \laxcomma{F}{G}$, we define the component of $\lambda$ on it to be $h$. Given a morphism $(f,g,\phi)\:(A,B,h)\to (A',B',h')$ in $\laxcomma{F}{G}$, we define the structure $2$-cell of $\lambda$ on it to be $\phi$. It is then straightforward to show that $\lambda$ is a lax natural transformation.
		
		For condition $(i)$ of lax comma object in $\twoCATlax$, since $\lambda$ picks the third component of objects and morphisms in $\laxcomma{F}{G}$ and the other two components are determined by the projections through $\partial_0$ and $\partial_1$, we have to define $V$ as
		$$V(M)\deq (P(M),Q(M),\gamma_M)$$
		$$V(m)\deq (P(m),Q(m),\gamma_m)$$
		for every $M\in \M$ and every morphism $m$ in $\M$. This is easily checked to be a $2$-functor, and it works by construction.
		
		For $(ii)$, take an arbitrary $\Xi$ as above. We see that the three requests
		$$\partial_0\h \nu=\Gamma, \quad \partial_1\h \nu=\Delta, \quad \lambda_{\nu}=\Xi$$
		force us to construct the component of $\nu$ on an arbitrary $M\in \M$ to be the morphism $(\Gamma_M,\Delta_M,\Xi_M)$ in $\laxcomma{F}{G}$. Then the structure $2$-cell of $\nu$ on a morphism $m\:M\to M'$ in $\M$, being a $2$-cell in $\laxcomma{F}{G}$, is determined by its projections through $\partial_0$ and $\partial_1$. So we are forced to define $\nu_m$ to be the $2$-cell $(\Gamma_m,\Delta_m)$ in $\laxcomma{F}{G}$. This is indeed a $2$-cell since $\Xi$ is a modification. It is straightforward to check that $\nu$ is a lax natural transformation, since $\Gamma$ and $\Delta$ are so. And we immediately see that if both $\Gamma$ and $\Delta$ are strict $2$-natural (resp. pseudo-natural) then also $\nu$ is so. The observation that $\nu$ is then the unique lax natural transformation $V\alax{}W$ such that the modification corresponding to the lax interchange rule in $\twoCATlax$ of $\nu$ and $\lambda$ coincides with $\Xi$ follows from \remx\ref{needof2catlax}. 
		
		For $(iii)$, let $M\in \M$. Since the component $\Theta_M$ will be a $2$-cell in $\laxcomma{F}{G}$, it is determined by its projections through $\partial_0$ and $\partial_1$. So we need to define
		$$\Theta_M\deq (\Phi_M,\Psi_M).$$
		That this is indeed a $2$-cell $\nu_M\aR{}\omega_M$ in $\laxcomma{F}{G}$ is guaranteed by equation~\refs{tridimpartcompleteunivproplaxcomma}, taking components on $M$. The $\Theta_M$'s organize into a modification thanks to the fact that $\Phi$ and $\Psi$ are modifications.
	\end{proof}
	
	\begin{rem}
		Notice from \defx\ref{completeunivproplaxcomma} that the lax comma object in a lax $3$-category really is an upgrade of the comma object to a lax $3$-dimensional ambient. Indeed, a lax comma object in a $2$-category is precisely a comma object, since any $\Xi$ of \defx\ref{completeunivproplaxcomma} is then forced to be the identity, and the tridimensional part becomes trivial.

		Interestingly, the uniqueness in the $2$-dimensional part of the universal property of the lax comma object in a lax $3$-category is obtained by considering the lax interchange rule.
		
		The universal property of \prox\ref{univproplaxcommagray} is obtained precisely by restricting ourselves to consider as $\Gamma$ and $\Delta$ only strict $2$-natural transformations.
	\end{rem}
	
	\begin{rem}
		The following theorem is original, refining the result of Bird in~\cite{bird_limitsintwocatoflocpresented} that the $2$-category of elements is given by a lax comma. The actual difference between our result and Bird's one is that ours allows to consider lax natural transformations $\Gamma$ and $\Delta$ in the $2$-dimensional part of the universal property. Moreover lax comma objects in $\twoCATlax$ solve the mismatch of Gray's lax commas between the use of lax natural transformations and the strict ambient $\twoCAT$ that hosts Gray's universal property.
	\end{rem}
	
	\begin{teor}\label{grothconstrislaxcomma}
		Let $F\:\B\to \CAT$ be a $2$-functor. The $2$-category of elements is equivalently given by the lax comma object
		\begin{eqD}{grothconstrislaxcommadiagram}
			\sq*[l][5][5][\lax][2.7][2.2][0.53]{\Grothop{F}}{\1}{\B}{\CAT}{}{\groth{F}}{\1}{F}
		\end{eqD}
		in $\twoCATlax$, exhibited by the cartesian-marked lax natural transformation of \prox\ref{laxnatinsidegrothconstr}. As a consequence, it is then also given by the strict $3$-pullback in $\twoCATlax$ between $F$ and the replacement $\tau$ of $\1\:\1\to \CAT$ obtained by taking the lax comma object of $\1\:\1\to \CAT$ along the identity of $\CAT$ \pteor{that is a lax $3$-dimensional version of the lax limit of the arrow $\1\:\1\to \CAT$}:
		\begin{eqD*}
			\begin{cd}*[5][5]
				\Grothop{F}\PB{rd} \arrow[d,"\groth{F}"'] \arrow[r] \& \CAT\bl \arrow[d,"{\tau}"'] \arrow[r]\& \1 \arrow[d,"{\1}"]\arrow[ld,Rightarrow,shorten <=2.7ex,shorten >=2.2ex,"\lax"{pos=0.53}] \\
				\B\arrow[r,"F"'] \& \CAT \arrow[r,equal]\& \CAT
			\end{cd}
		\end{eqD*}
		The domain of $\tau$ is a lax pointed version of $\CAT$, whence the notation $\CAT\bl$.
	\end{teor}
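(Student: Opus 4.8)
The plan is to reduce both assertions to the explicit lax comma and to \prox\ref{laxcommaisobject}. First I would identify the $2$-category of elements with the explicit lax comma $\laxcomma{\1}{F}$, where $\1\:\1\to\CAT$ is the $2$-functor picking out the terminal category. Unravelling Gray's definition: an object of $\laxcomma{\1}{F}$ is a triple $(\ast,B,h)$ with $h\:\1\to F(B)$, equivalently a pair $(B,X)$ with $X\in F(B)$; a $1$-cell $(\ast,B,h)\to(\ast,B',h')$ is a triple $(\id{\ast},g,\phi)$ with $g\:B\to B'$ and $\phi\:F(g)\c h\aR{}h'$, equivalently a pair $(g,\alpha)$ with $\alpha\:F(g)(X)\to X'$; and a $2$-cell $(g,\alpha)\aR{}(g',\alpha')$ is a $2$-cell $\delta\:g\aR{}g'$ subject to Gray's pasting condition, which in $\CAT$ unwinds to exactly $\alpha=\alpha'\c F(\delta)_X$. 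Comparing with \defx\ref{defexpltwosetgroth}, this gives an isomorphism of $2$-categories $\Grothop{F}\iso\laxcomma{\1}{F}$ intertwining $\groth{F}$ with the projection $\partial_1$, the unique $2$-functor $\Grothop{F}\to\1$ with $\partial_0$, and (reading off the component $1$-cells on objects and the structure $2$-cells on $1$-cells) the cartesian-marked lax natural transformation $\lambda$ of \prox\ref{laxnatinsidegrothconstr} with the canonical lax natural transformation of the lax comma; the marking plays no role in what follows and is recorded only because it is available. By \prox\ref{laxcommaisobject}, the latter exhibits $\laxcomma{\1}{F}$ as the lax comma object in $\twoCATlax$ from $\1$ to $F$, so transporting the universal property of \defx\ref{completeunivproplaxcomma} along the isomorphism yields the first assertion.

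For the ``as a consequence'' part, I would first run the same identification with $\B$ replaced by $\CAT$ and $F$ by $\Id{\CAT}$: by \prox\ref{laxcommaisobject}, the $2$-category $\laxcomma{\1}{\Id{\CAT}}$ (whose objects are pairs $(\mathcal{E},e)$ of a category with a chosen object, and whose $1$-cells $(\mathcal{E},e)\to(\mathcal{E}',e')$ are functors $g\:\mathcal{E}\to\mathcal{E}'$ equipped with a morphism $g(e)\to e'$ in $\mathcal{E}'$) together with its canonical lax natural transformation is the lax comma object in $\twoCATlax$ of $\1\:\1\to\CAT$ along $\Id{\CAT}$. This is precisely $\CAT\bl$, with $\tau\deq\partial_1$ the forgetful $2$-functor and $\partial_0$ the unique $2$-functor to $\1$, and it is the announced replacement of $\1$. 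It then remains to establish the general ``lax comma $=$ strict pullback of a replacement'' identity in $\twoCATlax$: for $2$-functors $M\:\mathcal{D}\to\CAT$ and $G\:\B\to\CAT$, the strict $3$-pullback of $G$ along the projection $\laxcomma{M}{\Id{\CAT}}\to\CAT$ is $\laxcomma{M}{G}$. I would prove this by comparing universal properties in $\twoCATlax$. Strict $3$-pullbacks in $\twoCATlax$ are computed hom-$2$-categorywise, since $\HomC{\twoCATlax}{\M}{-}=\mlax{\M}{-}$ and pullbacks of $2$-categories are formed separately on objects, $1$-cells and $2$-cells, a lax functor into such a pullback being precisely a compatible pair of lax functors. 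Hence a $1$-cell $\M\to\B\times_{\CAT}\laxcomma{M}{\Id{\CAT}}$ amounts to a $1$-cell $\M\to\B$ and a $1$-cell $\M\to\laxcomma{M}{\Id{\CAT}}$ with equal composites into $\CAT$; by condition $(i)$ of \defx\ref{completeunivproplaxcomma} for $\laxcomma{M}{\Id{\CAT}}$ (legitimate by \prox\ref{laxcommaisobject}) the latter is a triple consisting of a $1$-cell $\M\to\mathcal{D}$, a $1$-cell $\M\to\CAT$ and a lax natural transformation $\gamma$, and the compatibility forces the middle leg to be $G$ composed with the $\B$-component, which is exactly the datum classified by $\laxcomma{M}{G}$. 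The same comparison on $2$-cells (condition $(ii)$) and $3$-cells (condition $(iii)$) produces an isomorphism $\B\times_{\CAT}\laxcomma{M}{\Id{\CAT}}\iso\laxcomma{M}{G}$ over $\B$ and $\laxcomma{M}{\Id{\CAT}}$, compatibly with $\lambda$. Specialising to $M=(\1\:\1\to\CAT)$ and $G=F$ and combining with the first assertion gives the second.

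The unravelling in the first paragraph, and its analogue for $\CAT\bl$, are routine. The step that needs genuine care is the pasting comparison in the second paragraph: one must check that presenting a cone into $\laxcomma{M}{G}$ as a strictly commuting square over $G$ stacked on top of the lax-comma square of $\laxcomma{M}{\Id{\CAT}}$ is compatible with all three layers of \defx\ref{completeunivproplaxcomma}, in particular that the $2$-dimensional clause $(ii)$, which is phrased through the lax interchange rule in $\twoCATlax$, and the $3$-dimensional clause $(iii)$ survive the pasting. I expect this to go through because in the strict $3$-pullback the structure cells of a cone in the $\B$-direction are identities, so pasting with that square merely relabels the data living over $\laxcomma{M}{\Id{\CAT}}$; but it is where the lax $3$-categorical bookkeeping is least automatic, and hence the main obstacle.
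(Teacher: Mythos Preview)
Your proposal is correct and follows essentially the same approach as the paper: identify $\Grothop{F}$ with the explicit lax comma $\laxcomma{\1}{F}$ and invoke \prox\ref{laxcommaisobject} for the first part, then for the second part show that the strict $3$-pullback has the universal property of $\laxcomma{\1}{F}$ by combining the universal properties of $\laxcomma{\1}{\Id{\CAT}}$ and of the strict pullback via pasting, exactly as the paper sketches. Your version is more explicit and slightly more general (arbitrary $M$ in place of $\1$), and apart from a harmless slip writing ``lax functor'' where the paper works only with strict $2$-functors, the bookkeeping you flag as the main obstacle is precisely the ``basics of the calculus of pasting'' the paper leaves to the reader.
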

	\begin{proof}
		The proof is a straightforward calculation. The fact that $\groth{F}$ is then also the strict $3$-pullback of $\tau$ is readily checked by showing that such strict $3$-pullback satisfies the universal property of the lax comma object $\laxcomma{\1}{F}$ in $\twoCATlax$ that we have presented in \defx\ref{completeunivproplaxcomma}, using the universal properties of $\laxcomma{\1}{\Id{\CAT}}$ and of the strict $3$-pullback together with some basics of the calculus of pasting.
	\end{proof}
	
	\begin{rem}\label{remapplelementarythreetopos}
		\thex\ref{grothconstrislaxcomma} shows a potential application of this work to $3$-dimensional elementary topos theory. After \thex\ref{grothconstrislaxcomma}, we should indeed view $\twoCATlax$ as the archetypal example of a would-be notion of elementary $3$-topos. Its classification process is the $2$-category of elements, generalizing Weber's idea in~\cite{weber_yonfromtwotop} that the category of elements is the archetypal $2$-dimensional classification process. Towards a definition of elementary $3$-topos, one can choose between two ways. We can either regulate the classification process with lax comma objects in a lax $3$-category (as originally defined here in \defx\ref{completeunivproplaxcomma}) or take pullbacks along discrete $2$-opfibrations (that serve as replacement).
	\end{rem}
	
	\begin{prop}\label{twosetgrothcanonicallyextendstwofunctor}
		By \thex\ref{grothconstrislaxcomma}, the $2$-category of elements construction canonically extends, for every $2$-category $\B$, to a $2$-functor
		$$\groth{-}\:\mlax{\B}{\CAT}\to \slice{\twoCAT}{\B}$$
	\end{prop}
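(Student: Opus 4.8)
The plan is to read off $\groth{-}$ from the universal property of the lax comma object established in \thex\ref{grothconstrislaxcomma}, exactly as one deduces functoriality of the ordinary category of elements from the universal property of a comma object. For each $2$-functor $F\:\B\to\CAT$ write $\partial_0^F\:\Grothop{F}\to\1$ for the unique $2$-functor to the terminal $2$-category and $\lambda^F\:\1\c\partial_0^F\aR{}F\c\groth{F}$ for the cartesian-marked lax natural transformation of \prox\ref{laxnatinsidegrothconstr}, which by \thex\ref{grothconstrislaxcomma} exhibits $\groth{F}\:\Grothop{F}\to\B$ as the lax comma object $\laxcomma{\1}{F}$ in $\twoCATlax$, in the sense of \defx\ref{completeunivproplaxcomma}. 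On objects, $\groth{-}$ sends $F$ to the object $\groth{F}$ of $\slice{\twoCAT}{\B}$.

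On $1$-cells, given a lax natural transformation $\sigma\:F\alax{}G$, I would form the $2$-cell $(\sigma\c\groth{F})\c\lambda^F\:\1\c\partial_0^F\aR{}G\c\groth{F}$ in $\twoCATlax$ obtained by whiskering $\sigma$ with $\groth{F}$ and pasting the result onto $\lambda^F$, and then apply condition $(i)$ of \defx\ref{completeunivproplaxcomma} for $\laxcomma{\1}{G}$. This yields a unique $2$-functor $\groth{\sigma}\:\Grothop{F}\to\Grothop{G}$ with $\partial_0^G\c\groth{\sigma}=\partial_0^F$, $\groth{G}\c\groth{\sigma}=\groth{F}$ and $\lambda^G\c\groth{\sigma}=(\sigma\c\groth{F})\c\lambda^F$; the second of these equations says precisely that $\groth{\sigma}$ is a $1$-cell of $\slice{\twoCAT}{\B}$. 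On $2$-cells, given a modification $m\:\sigma\aM{}\tau\:F\alax{}G$, I would apply condition $(ii)$ of \defx\ref{completeunivproplaxcomma} with $\Gamma\deq\id{}$, $\Delta\deq\id{}$, and with $\Xi$ the modification $\lambda^G\c\groth{\sigma}\aM{}\lambda^G\c\groth{\tau}$ obtained by whiskering $m$ with $\groth{F}$ and pasting onto $\lambda^F$; once $\Gamma$ and $\Delta$ are identities, the two pastings appearing in condition $(ii)$ reduce to whiskerings of $\lambda^G$, so a $3$-cell of the shape required is exactly such a modification. This produces a unique $2$-cell $\groth{m}\:\groth{\sigma}\aR{}\groth{\tau}$ with $\partial_0^G\c\groth{m}=\id{}$, $\groth{G}\c\groth{m}=\id{}$ and $\lambda^G_{\groth{m}}=\Xi$. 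Since $\Gamma$ and $\Delta$ are strict $2$-natural, the ``Moreover'' clause of \prox\ref{laxcommaisobject} makes $\groth{m}$ strict $2$-natural, and together with $\groth{G}\c\groth{m}=\id{}$ this exhibits $\groth{m}$ as a genuine $2$-cell of $\slice{\twoCAT}{\B}$.

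It then remains to check that these assignments strictly preserve identities, composition of lax natural transformations, and vertical and horizontal composition of modifications, so that $\groth{-}$ is a $2$-functor. Each such equality is immediate from the relevant uniqueness clause of \defx\ref{completeunivproplaxcomma}: for instance $\id{\Grothop{F}}$ visibly satisfies the three equations characterising $\groth{\id{F}}$, so $\groth{\id{F}}=\id{\Grothop{F}}$; and for $\sigma\:F\alax{}G$ and $\sigma'\:G\alax{}H$ the composite $\groth{\sigma'}\c\groth{\sigma}$ satisfies the equations characterising $\groth{\sigma'\c\sigma}$, because $\lambda^H\c(\groth{\sigma'}\c\groth{\sigma})$ unwinds, using $\lambda^H\c\groth{\sigma'}=(\sigma'\c\groth{G})\c\lambda^G$ and $\lambda^G\c\groth{\sigma}=(\sigma\c\groth{F})\c\lambda^F$, to $((\sigma'\c\sigma)\c\groth{F})\c\lambda^F$. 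The statements about modifications follow in the same way, using additionally that whiskering a modification with $\groth{F}$ and pasting onto $\lambda^F$ is itself functorial in the modification.

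I expect there to be no real obstacle, the argument being essentially formal; the only point requiring care is the set-up for the $2$-cell part, namely confirming that the $\Xi$ described above genuinely matches the $3$-cell demanded by condition $(ii)$ of \defx\ref{completeunivproplaxcomma} — that is, that once $\Gamma$ and $\Delta$ are taken to be identities the lax-interchange subtlety in the general formulation becomes vacuous, the two pastings collapsing to $\lambda^G\c\groth{\sigma}$ and $\lambda^G\c\groth{\tau}$. As a cross-check one can describe $\groth{\sigma}$ explicitly, by $(B,X)\mapsto(B,\sigma_B(X))$ on objects and $(f,\alpha)\mapsto(f,\sigma_{B'}(\alpha)\c(\sigma_f)_X)$ on morphisms, where $\sigma_f$ denotes the structure $2$-cell of $\sigma$ on $f\:B\to B'$, and verify directly from the lax naturality axioms of $\sigma$ that this is a well-defined $2$-functor lying over $\B$; this description agrees with the $\groth{\sigma}$ produced above.
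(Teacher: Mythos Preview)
Your proposal is correct and follows essentially the same approach as the paper: define $\groth{\sigma}$ via condition~$(i)$ of \defx\ref{completeunivproplaxcomma} applied to the pasting of $\sigma$ onto $\lambda^F$, define $\groth{m}$ via condition~$(ii)$ with $\Gamma=\Delta=\id{}$, and verify $2$-functoriality by uniqueness. Your explicit formula $(f,\alpha)\mapsto(f,\sigma_{B'}(\alpha)\c(\sigma_f)_X)$ is in fact more careful than the paper's stated $(f,\alpha)\mapsto(f,\phi_C(\alpha))$, which omits the structure $2$-cell term and is literally correct only when $\sigma$ is strict.
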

	\begin{proof}
		Given a lax natural transformation $\phi\:F\aR{}G\:\B\to \CAT$, we define $\groth{\phi}$ as the unique morphism $\groth{\phi}\:\Grothop{F}\to \Grothop{G}$ induced by the universal property of the lax comma object $\Grothop{G}$ in $\twoCATlax$ applied to the lax natural transformation
		\begin{cd}[5.6][6.6]
			{\Grothop{F}}\arrow[d,"{\groth{F}}"'] \arrow[r,"{}"]\& {\1}\arrow[ld,Rightarrow,shift right=1ex,"{\lambda^F}"'{pos=0.475, inner sep=0.25ex},shorten <=3.2ex, shorten >=3.3ex] \arrow[d,"{\1}"]\\
			{\B} \arrow[r,bend left,"{F}"{pos=0.64},""'{name=A}]\arrow[r,bend right,"{G}"',""{name=B}] \& {\CAT}
			\arrow[Rightarrow,from=A,to=B,shorten <=0.4ex,shorten >=0.4ex,"\phi"]
		\end{cd}
		where $\lambda^F$ is the lax natural transformation that presents $\Grothop{F}$ as a lax comma object in $\twoCATlax$. Explicitly, for every $2$-cell $\delta\:(f,\alpha)\aR{}(g,\beta)\:(B,X)\to (C,X')$ in $\Grothop{F}$
		$$\groth{\phi}(B,X)=(B,\phi_B(X)) \quad \text{and} \quad \groth{\phi}(f,\alpha)=(f,\phi_C(\alpha))\quad \text{and} \quad \groth{\phi}(\delta)=\delta.$$ 
		
		Given a modification $\Theta\:\phi\aM{} \psi\:F\aR{}G\:\B\to \CAT$, we define $\groth{\Theta}$ as the unique $2$-natural transformation induced by the universal property of the lax comma object $\Grothop{G}$ in $\twoCATlax$ applied, in the notation of \defx\ref{completeunivproplaxcomma} to $V=\groth{\phi}$, $W=\groth{\psi}$, $\Gamma=\id{}$, $\Delta=\id{}$ and $\Xi$ given by
		\begin{cd}[6.7][7.8]
			{\Grothop{F}}\arrow[d,"{\groth{F}}"'] \arrow[r,"{}"]\& {\1}\arrow[ld,Rightarrow,shift right=1ex,"{\lambda^F}"'{pos=0.475},shorten <=3.2ex, shorten >=3.3ex] \arrow[d,"{\1}"]\\
			{\B} \arrow[r,bend left=35,"{F}"{pos=0.64},""'{name=A}]\arrow[r,bend right=35,"{G}"',""{name=B}] \& {\CAT}
			\arrow[Rightarrow,from=A,to=B,bend right=40,shift right=1ex,shorten <=0.4ex,shorten >=0.4ex,"\phi"',""{name=L}]
			\arrow[Rightarrow,from=A,to=B,bend left=40,shift left=1ex,shorten <=0.4ex,shorten >=0.4ex,"\psi",""'{name=R}]
			\arrow[triple,from=L,to=R,shorten <=0ex,shorten >=-0.2ex,"\Theta"{inner sep=1ex}]
		\end{cd}
		Explicitly, the component of $\groth{\Theta}$ on an object $(B,X)\in \Grothop{F}$ is
		$$\groth{\Theta}_{(B,X)}=\left(\id{B},\Theta_{B,X}\right).$$
		It is straightforward to show that $\groth{-}$ is indeed a $2$-functor.
	\end{proof}
	
	\begin{rem}\label{coopflavourstwosetenrichedgroth}
		The following table shows the four co-op versions of the $2$-category of elements construction, with the corresponding notions of fibration. The first two are given by lax comma objects, while the last two are given by oplax comma objects.
		\begin{center}\h[-18]
			\begin{minipage}{3.3cm}\v[-0.85]
				\begin{eqD*}
					\h[-7]\sq*[l][5.5][4.5][\lax][2.7][2.2][0.53]{\Grothop{F}}{\1}{\B}{\CAT}{}{\groth{F}}{\1}{F}
				\end{eqD*}
				\begin{center}{\footnotesize disc $2$-opfibrations:}
					
					\noindent {\footnotesize opfibrations, locally discrete fibrations}
				\end{center}
			\end{minipage}\quad
			\begin{minipage}{3.38cm}
				\begin{eqD*}
					\h[-7]\sq*[o][5.5][4.5][\lax][2.2][2.7][0.47]{\Groth{F}}{\1}{\B}{\CAT\op}{}{\groth{F}}{\1}{F}
				\end{eqD*}
				\begin{center}{\footnotesize disc $2$-fibrations:}
					
					\noindent {\footnotesize fibrations, locally discrete opfibrations}
				\end{center}
			\end{minipage}\quad
			\begin{minipage}{3.47cm}
				\begin{eqD*}
					\h[-7]\sq*[l][5.5][3.4][\oplax][2.7][2.2][0.53]{\Grothcoop{F}}{\1}{\B}{\CAT\co}{}{\groth{F}}{\1}{F}
				\end{eqD*}
				\begin{center}{\footnotesize disc $2$-coopfibrations:}
					
					\noindent {\footnotesize opfibrations, locally discrete opfibrations}
				\end{center}
			\end{minipage}\quad
			\begin{minipage}{3.53cm}
				\begin{eqD*}
					\h[-7]\sq*[o][5.5][3.3][\oplax][2.2][2.7][0.47]{\Grothco{F}}{\1}{\B}{\CAT\coop}{}{\groth{F}}{\1}{F}
				\end{eqD*}
				\begin{center}{\footnotesize disc $2$-cofibrations:}
					
					\noindent {\footnotesize fibrations, locally discrete fibrations}
				\end{center}
			\end{minipage}
		\end{center}
	\end{rem}	
	
		We now apply Section~\ref{sectionkanextensions} to the $2$-category of elements. We show that the same filled square that exhibits a lax comma object in $\twoCATlax$ also exhibits a pointwise left Kan extension in $\twoCATlax$ (\defx\ref{defpointkanextensionlaxthreecat}). Such result is original.
	
	\begin{teor}\label{teorpointkanextforgroth}
		Let $F\:\A\to \CAT$ be a $2$-functor with $\A$ a small $2$-category. Then the lax comma object square in $\twoCATlax$
		\sq[l][5][5][\lax][2.7][2.2][0.53]{\Grothop{F}}{\1}{\A}{\CAT}{}{\groth{F}}{\1}{F}
		exhibits
		$$F=\Lan{\groth{F}}{\Delta 1}.$$
	\end{teor}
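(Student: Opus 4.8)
The plan is to verify the defining property of \defx\ref{defpointkanextensionlaxthreecat} directly; this is the $\twoCATlax$-incarnation of the $\Set$-enriched co-Yoneda/density fact, so the strategy mirrors the $1$-dimensional one. Observe first that $\groth{F}\:\Grothop{F}\to \A$ is already in the form $\groth{M}$ for $M=F$, and that the cartesian-marked lax natural transformation $\lambda$ of \prox\ref{laxnatinsidegrothconstr} is precisely cartesian-marked lax with respect to this $M$; so the hypotheses of \defx\ref{defpointkanextensionlaxthreecat} hold with no adjustment, and what must be shown is that, for every $A\in \A$, the square of \prox\ref{laxnatinsidegrothconstr} exhibits
\[\HomC{\CAT}{F(A)}{C}\;\iso\;\HomC{\moplaxn{{\left(\Grothop{F}\right)}\op}{\CAT}}{\HomC{\A}{\groth{F}(-)}{A}}{\HomC{\CAT}{\Delta 1(-)}{C}}\]
$2$-naturally in $C\in \CAT$, via the cartesian-marked oplax cocylinder of equation~\refs{eqpointkanextensiontwocatlax}. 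Using the explicit form of $\lambda$ from \prox\ref{laxnatinsidegrothconstr} (its component at $(B,X)$ names $X\in F(B)$, and its structure $2$-cell on $(f,\alpha)$ is $\alpha$), that comparison functor sends $g\:F(A)\to C$ to the cartesian-marked oplax natural transformation whose component at $(B,X)\in \Grothop{F}$ is the functor $u\mapsto g\bigl(F(u)(X)\bigr)$.

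The inverse I would build exactly as in \conx\ref{2SetGrothconstrtoconicalize} and the proof of \thex\ref{laxnormalareweighted}: given such a cartesian-marked oplax natural transformation $\Phi$, set $g(X)\deq \Phi_{(A,X)}(\id{A})$ for $X\in F(A)$, reading off the action of $g$ on a morphism $\alpha\:X\to X'$ of $F(A)$ from the structure $2$-cell of $\Phi$ on $(\id{A},\alpha)\:(A,X)\to (A,X')$ in $\Grothop{F}$ evaluated at $\id{A}$ (the weight $\HomC{\A}{\groth{F}(-)}{A}$ sends this morphism to an identity), and likewise on $2$-cells of $F(A)$. That $g$ is a $2$-functor follows from $2$-functoriality of $F$ together with the oplax and compatibility axioms of $\Phi$, just as in the cited proof. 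Conversely, the marking axiom of $\Phi$ on the cartesian morphism $\bigl(u,\id{F(u)(X)}\bigr)\:(B,X)\to \bigl(A,F(u)(X)\bigr)$, whose image under the weight is precomposition by $u$, forces $\Phi_{(B,X)}(u)=\Phi_{(A,F(u)(X))}(\id{A})=g\bigl(F(u)(X)\bigr)$; so $g\mapsto \Phi$ and $\Phi\mapsto g$ are mutually inverse, and the correspondence extends to modifications. Alternatively, the same isomorphism can be deduced from the covariant analogue of \exax\ref{univoplaxnormalcoconepresheaves} (every $\CAT$-valued $2$-functor is the cartesian-marked oplax colimit of representables over its $2$-category of elements) by applying evaluation at $A$, which preserves weighted $2$-colimits and hence cartesian-marked oplax ones, and then invoking \thex\ref{redoplaxnormalconical} and the continuity of a colimit in its weight; the weight $\HomC{\A}{\groth{F}(-)}{A}$ is exactly the evaluation at $A$ of the diagram of representables.

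I expect the only real work to be the bookkeeping that identifies this abstract isomorphism with the one induced by pasting with $\lambda$ in equation~\refs{eqpointkanextensiontwocatlax} — that is, confirming that the inverse described above is genuinely inverse to the comparison of equation~\refs{eqpointkanextensiontwocatlax} — which comes down to the formulas for $\lambda$ in \prox\ref{laxnatinsidegrothconstr} and the correspondence recorded in \remx\ref{univoplaxnormalcocone}. Some care is needed with the variances (the contravariance of the weight $\HomC{\A}{\groth{F}(-)}{A}$ on $\Grothop{F}$, and keeping $\Grothop{F}$, $\Groth{F}$ and their opposites apart) and with checking that the cocylinder of equation~\refs{eqpointkanextensiontwocatlax} really is cartesian-marked oplax with respect to the marking $M=F$ — which holds because $\lambda$ is cartesian-marked lax with respect to $M$, so its structure $2$-cells on the cartesian morphisms $(f,\id{})$ are identities. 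The $2$-naturality of the comparison in $C$, and in $A$ (using that $L=F$ is a $2$-functor), is automatic, as noted in \defx\ref{defpointkanextensionlaxthreecat}.
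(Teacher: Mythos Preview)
Your approach is essentially the same as the paper's: both directly verify the universal property of \defx\ref{defpointkanextensionlaxthreecat} by writing down the inverse $g(X)=\Phi_{(A,X)}(\id{A})$, reading $g$ on morphisms from the structure $2$-cell on $(\id{A},\alpha)$ at $\id{A}$, and using the cartesian marking on $(u,\id{})$ to recover the general components $\Phi_{(B,X)}(u)$. One small slip: since $F$ lands in $\CAT$, the fibre $F(A)$ is a $1$-category, so $g\:F(A)\to C$ is merely a functor (not a $2$-functor) and there are no $2$-cells of $F(A)$ to handle; the paper accordingly treats only the $1$- and $2$-dimensional universality of the cocylinder (objects and modifications).
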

	\begin{proof}
		By \prox\ref{laxnatinsidegrothconstr}, we know that the lax natural transformation $\lambda$ that presents the lax comma object in $\twoCATlax$ is cartesian-marked lax. Given $A\in \A$ and $C\in \CAT$, we prove that the cartesian-marked oplax natural transformation
		\begin{equation*}
			\HomC{\A}{\groth{F}(-)}{A}\aRR{F}\HomC{\CAT}{(F\c \groth{F})(-)}{F(A)}\aRR[\oplaxn]{\HomC{\CAT}{\lambda_{-}}{\id{}}}\HomC{\CAT}{\Delta 1(-)}{F(A)},
		\end{equation*}
		that we call $\mu$, is $2$-universal. Explicitly, $\mu$ has components
		\begin{fun}
			\mu_{(B,X)} & \: & \HomC{\B}{B}{A} & \too & \HomC{\CAT}{1}{F(A)} \\[0.9ex]
			&& \tc*[6]{B}{A}{u}{v}{\theta} &\mtoo & \tc*[8][30][pos=0.59][pos=0.59][,shift right=0.4ex]{1}{F(A)}{F(u)(X)}{F(v)(X)}{F(\theta)_X}
		\end{fun}
		for every $(B,X)\in \Grothop{F}$ and structure $2$-cells
		$${\left(\mu_{(g,\gamma)}\right)}_u=F(u)(\gamma)\:F(u\c g)(X')\to F(u)(X)$$
		on every $(g,\gamma)\:(B,X)\al{} (B',X')$ in $\Grothop{F}$, for every $u\:B\to A$ in $\A$. Given
		$$\sigma\:\HomC{\A}{\groth{F}(-)}{A}\aoplaxn{}\HomC{\CAT}{\Delta 1(-)}{C},$$
		we prove that there exists a unique functor $s\:F(A)\to C$ such that
		$$(s\c -)\c \mu =\sigma.$$
		We see that there is at most one such $s$, as we need, for every $\alpha\:X\to X'$ in $F(A)$,
		$$s(X)=s\left(\mu_{(A,X)}(\id{A})\right)=\sigma_{(A,X)}(\id{A})$$
		$$s(\alpha)=s\left({\left(\mu_{(\id{A},\alpha)}\right)}_{\id{A}}\right)=\left(\sigma_{(\id{A},\alpha)}\right)_{\id{A}}.$$
		And this $s$ works thanks to the fact that $\sigma$ is cartesian-marked oplax.
		
		We now prove the $2$-dimensional universality of $\mu$. Given $$\Xi\:\sigma\aM{}\sigma'\:\HomC{\A}{\groth{F}(-)}{A}\aoplaxn{}\HomC{\CAT}{\Delta 1(-)}{C},$$
		we prove that there exists a unique natural transformation $\xi\:s\aR{} s'\:F(A)\to C$ such that $(\xi\ast -)\h \mu =\Xi$. We see that there is at most one such $\xi$, as we need
		$$\xi_X=\xi_{\mu_{(A,X)}(\id{A})}=\Xi_{(A,X),\id{A}}$$
		for every $X\in F(A)$. And this $\xi$ works. We have thus shown that $\mu$ is $2$-universal.
	\end{proof}
	
	\begin{rem}
		Thanks to \prox\ref{pointkanisalsoweak}, we obtain as a corollary that the $2$-category of elements also exhibits a weak left Kan extension in $\twoCATlax$. The isomorphism of categories that presents such weak left Kan extension has been proved by Bird in~\cite{bird_limitsintwocatoflocpresented}. It also appears in Gray's~\cite{gray_formalcattheory} and in Descotte, Dubuc and Szyld's~\cite[Remark 1.2.4]{descottedubucszyld_sigmalimandflatpseudofun}. Moreover, such isomorphism restricts to different flavours of laxness. All these isomorphisms are a particular case of Proposition~3.18 of Szyld's~\cite{szyld_limlifttheortwodimmonad}.
	\end{rem}

	\begin{coroll}[Bird~\cite{bird_limitsintwocatoflocpresented}, Szyld~\cite{szyld_limlifttheortwodimmonad}]\label{isomorphismsweakkanextension}
		Let $F\:\A\to \CAT$ be a $2$-functor. Then $$F=\lan{\groth{F}}{\Delta 1}.$$
		Moreover the isomorphism of categories 
		$$\HomC{\mlax{\A}{\CAT}}{F}{U}\iso \HomC{\mlax{\Grothopdiag{F}}{\CAT}}{\Delta 1}{U\c \groth{F}},$$
		natural in $U\:\A\to \CAT$, that presents the weak left Kan extension in $\twoCATlax$ \pteor{see \remx\ref{remhomtwocategoriestwocatlax}} restricts to isomorphisms
		$$\HomC{\mps{\A}{\CAT}}{F}{U}\iso \HomC{\msigma{\Grothopdiag{F}}{\CAT}}{\Delta 1}{U\c \groth{F}}$$
		$$\HomC{\m{\A}{\CAT}}{F}{U}\iso \HomC{\mlaxn{\Grothopdiag{F}}{\CAT}}{\Delta 1}{U\c \groth{F}},$$
		where $\opn{ps}$ means to restrict to pseudonatural transformations and $\opn{sigma}$ means to restrict to Descotte, Dubuc and Szyld's~\cite{descottedubucszyld_sigmalimandflatpseudofun} {sigma natural transformations} \pteor{which are a pseudo version of cartesian-marked lax natural transformations}.
	\end{coroll}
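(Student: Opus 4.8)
The plan is to read the corollary off the results already established. By \thex\ref{teorpointkanextforgroth} the lax comma object square exhibits $F=\Lan{\groth{F}}{\Delta 1}$, and by \prox\ref{pointkanisalsoweak} a pointwise left Kan extension in $\twoCATlax$ along a discrete $2$-opfibration is also a weak one, so $F=\lan{\groth{F}}{\Delta 1}$. Unwinding \defx\ref{defweakkanextensionlaxthreecat} with $\Q=\twoCATlax$, $L=F$, $K=\groth{F}$ and diagram $\Delta 1$, and using \remx\ref{remhomtwocategoriestwocatlax} to identify the relevant hom-$2$-categories with $\mlax{\A}{\CAT}$ and $\mlax{\Grothopdiag{F}}{\CAT}$, the isomorphism of equation~\refs{eqweakkanextensionlaxthreecat} becomes the first displayed isomorphism, natural in $U\:\A\to\CAT$ (the naturality being automatic). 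By construction this isomorphism is given by composing $\lambda$, the cartesian-marked lax natural transformation of \prox\ref{laxnatinsidegrothconstr}, with whiskerings.

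To obtain the two refinements I would write this pasting out explicitly. Given a lax natural transformation $\phi\:F\alax{}U$, vertically composing $\lambda\:\Delta 1 \alaxn{}F\circ\groth{F}$ with the whiskering $\phi\circ\groth{F}$ produces the lax natural transformation $\psi\:\Delta 1 \alax{}U\circ\groth{F}$ whose component $\psi_{(B,X)}$ is the functor $\1\to U(B)$ picking out $\phi_B(X)$, and whose structure $2$-cell $\psi_{(f,\alpha)}$, on a morphism $(f,\alpha)\:(A,X)\to(B,X')$ of $\Grothop{F}$, is the pasting of the structure $2$-cell $\phi_f$ of $\phi$ evaluated at $X$ with $\phi_B$ applied to $\lambda_{(f,\alpha)}=\alpha$; that is, $\psi_{(f,\alpha)}=\phi_B(\alpha)\circ(\phi_f)_X$. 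The decisive special case is that of a cartesian morphism $(f,\id{F(f)(X)})$: since $\lambda$ is cartesian, $\lambda_{(f,\id{F(f)(X)})}$ is an identity and $\psi_{(f,\id{F(f)(X)})}=(\phi_f)_X$.

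From this the restrictions follow object by object, which is enough since the map is an isomorphism of categories and a modification between lax natural transformations of any fixed flavour satisfies no extra condition. A lax natural transformation into $\CAT$ is strictly $2$-natural exactly when every structure $2$-cell $\phi_f$ is an identity, i.e.\ when every component $(\phi_f)_X$ is an identity, i.e.\ when every $\psi_{(f,\id{F(f)(X)})}$ is an identity, i.e.\ when $\psi$ is cartesian-marked lax in the sense of \defx\ref{deflaxnormal}; this yields the third isomorphism. Similarly $\phi$ is pseudonatural exactly when every $\phi_f$, hence every $(\phi_f)_X=\psi_{(f,\id{F(f)(X)})}$, is invertible, i.e.\ when the structure $2$-cells of $\psi$ on the marked (cartesian) morphisms are invertible; this is precisely what it means for $\psi$ to be a sigma natural transformation in the sense of Descotte, Dubuc and Szyld's~\cite{descottedubucszyld_sigmalimandflatpseudofun} (the pseudo relaxation of the cartesian marking), and it yields the second isomorphism. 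On the remaining, non-cartesian morphisms $\psi$ stays genuinely lax, consistently with $\psi_{(f,\alpha)}=\phi_B(\alpha)\circ(\phi_f)_X$ not being invertible in general.

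The part to be careful with is not a computation but the bookkeeping of definitions: one must use that ``sigma natural'' for a cone out of $\Delta 1$ along $\groth{F}$ means invertibility of the structure $2$-cells exactly on the cartesian morphisms and nothing more, so that it corresponds to pseudonaturality of $\phi$ rather than to the much stronger requirement that $\psi$ be fully pseudonatural; and that a lax natural transformation valued in $\CAT$ with all structure $2$-cells trivial is the same thing as a strict $2$-natural transformation. Everything else is a direct unravelling of the explicit formula for $\lambda$ from \prox\ref{laxnatinsidegrothconstr}.
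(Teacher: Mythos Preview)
Your proof is correct and follows the same route as the paper: deduce the weak Kan extension from \thex\ref{teorpointkanextforgroth} via \prox\ref{pointkanisalsoweak}, then verify the restrictions. The paper's proof is a two-line pointer (``Clear by \prox\ref{pointkanisalsoweak}. It is straightforward to see that the restrictions hold''), whereas you spell out the ``straightforward'' part by computing the pasting explicitly as $\psi_{(f,\alpha)}=\phi_B(\alpha)\circ(\phi_f)_X$ and reading off the equivalences $\phi$ strict $\Leftrightarrow$ $\psi$ cartesian-marked lax, and $\phi$ pseudo $\Leftrightarrow$ $\psi$ sigma, from the special case of cartesian morphisms. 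Your remark that modifications add no extra condition, so that checking the restriction on objects suffices, is exactly the point needed to pass from a bijection on objects to an isomorphism of full subcategories.
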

	\begin{proof}
		Clear by \prox\ref{pointkanisalsoweak}. It is straightforward to see that the restrictions hold.
	\end{proof}
	
	\begin{rem}
		The third isomorphism of \corx\ref{isomorphismsweakkanextension} offers a shorter but less elementary proof to \thex\ref{redlaxnormalconical} (reduction of weighted $2$-limits to cartesian-marked lax conical ones). Indeed this is what Street showed in~\cite{street_limitsindexedbycatvalued}.
	\end{rem}
	
	\begin{rem}
		We can also deduce the $2$-fully faithfulness of the $2$-category of elements construction (in three laxness flavours) and extend Lambert's \thex\ref{essentialimagegivenbytwosetopf} to $2$-equivalences between $2$-copresheaves and discrete $2$-opfibrations. The fact that the first $2$-functor $\groth{-}$ of \thex\ref{twofullyfaithfulnessgroth} is $2$-fully faithful is proved also in Bird's~\cite{bird_limitsintwocatoflocpresented}, but we show that it is a consequence of the weak Kan extension result. None of the three $2$-equivalence results of \thex\ref{twofullyfaithfulnessgroth} seems to appear in the literature.
	\end{rem}
	
	\begin{teor}\label{twofullyfaithfulnessgroth}
		Let $\A$ be a $2$-category. The $2$-category of elements construction \pteor{extended to consider lax natural transformations as in \prox\ref{twosetgrothcanonicallyextendstwofunctor}} produces a $2$-equivalence
		$$\groth{-}\:\mlax{\A}{\CAT}\aequi \Fib[t][n][\A]$$
		where $\Fib[t][n][\A]$ is the full sub-$2$-category of $\h[1]\slice{\twoCAT}{\A}$ given by the split discrete $2$-opfibrations with small fibres. Moreover this restricts to $2$-equivalences
		$$\groth{-}\:\mps{\A}{\CAT}\aequi \Fib'[t][n][\A]$$
		$$\groth{-}\:\m{\A}{\CAT}\aequi \Fib"[t][n][\A]$$
		where $\Fib'[t][n][\A]$ and $\Fib"[t][n][\A]$ restrict $\Fib[t][n][\A]$ respectively\v[0.4] to cartesian functors \pteor{as underlying functors of the $1$-cells} and to cleavage preserving functors.
	\end{teor}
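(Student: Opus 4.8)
The plan is to get essential surjectivity for free from Lambert's \thex\ref{essentialimagegivenbytwosetopf} and to reduce the rest to a $2$-fully faithfulness statement which I will deduce from the weak Kan extension isomorphisms of \corx\ref{isomorphismsweakkanextension}. Since $\Fib[t][n][\A]$ is by definition a \emph{full} sub-$2$-category of $\slice{\twoCAT}{\A}$, and since the action of $\groth{-}$ on objects does not depend on the laxness flavour chosen on the morphisms, \thex\ref{essentialimagegivenbytwosetopf} already shows that each of the three $\groth{-}$ is essentially surjective onto $\Fib[t][n][\A]$ (for the strict case, reading $F$ off the chosen split cleavage produces $F\in\m{\A}{\CAT}$ with $\groth{F}$ cleavage-preservingly isomorphic to the given opfibration). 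It therefore suffices to exhibit, for every pair $F,G\in \mlax{\A}{\CAT}$, an isomorphism of categories $\HomC{\mlax{\A}{\CAT}}{F}{G}\iso \HomC{\slice{\twoCAT}{\A}}{\groth{F}}{\groth{G}}$ agreeing with the functor induced by $\groth{-}$ of \prox\ref{twosetgrothcanonicallyextendstwofunctor}, and then to check it restricts along the three flavours.

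For $2$-fully faithfulness I would assemble the following chain. First, unwinding \defx\ref{defexpltwosetgroth}, the $2$-category of elements of the composite $G\c\groth{F}\:\Grothop{F}\to\CAT$ is canonically isomorphic over $\Grothop{F}$ to the strict pullback $\Grothop{F}\x[\A]\Grothop{G}$; by the universal property of that pullback, $\HomC{\slice{\twoCAT}{\A}}{\groth{F}}{\groth{G}}$ is then the category of sections over $\Grothop{F}$ of $\groth{G\c\groth{F}}$. Second, for any $H\:\Grothop{F}\to\CAT$, a lax natural transformation $\Delta 1 \alax{} H$ is exactly a section over $\Grothop{F}$ of $\groth{H}$: the component at an object is an object of the corresponding fibre, the structure $2$-cell at a morphism is the morphism of the fibre paired with it, and the lax-naturality axioms are the functoriality axioms for the section; likewise modifications correspond to the vertical $2$-cells of $\slice{\twoCAT}{\Grothopdiag{F}}$ between sections. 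Applying this with $H=G\c\groth{F}$ and then \corx\ref{isomorphismsweakkanextension} with $U=G$ gives
$$\HomC{\slice{\twoCAT}{\A}}{\groth{F}}{\groth{G}}\;\iso\; \HomC{\mlax{\Grothopdiag{F}}{\CAT}}{\Delta 1}{G\c\groth{F}}\;\iso\; \HomC{\mlax{\A}{\CAT}}{F}{G}.$$
Tracing the explicit formulas of \prox\ref{twosetgrothcanonicallyextendstwofunctor} through the composite shows it is inverse to the functor induced by $\groth{-}$, whence $\groth{-}\:\mlax{\A}{\CAT}\aequi \Fib[t][n][\A]$ is a $2$-equivalence.

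For the two restrictions I would match the laxness conditions across the middle isomorphism. Under the identification of lax natural transformations $\Delta 1\alax{}G\c\groth{F}$ with sections of $\groth{G\c\groth{F}}\cong\Grothop{F}\x[\A]\Grothop{G}$, the structure $2$-cell at a chosen cartesian morphism $\p{f,\id{}}$ of $\Grothop{F}$ is precisely the second component of the image of that cartesian lift under the section. Hence the transformation is cartesian-marked lax (\defx\ref{deflaxnormal}) exactly when the associated $2$-functor $\Grothop{F}\to\Grothop{G}$ over $\A$ sends chosen cartesian liftings to chosen cartesian liftings, i.e.\ is cleavage preserving; and it is a Descotte--Dubuc--Szyld sigma natural transformation exactly when those structure $2$-cells are invertible on all cartesian morphisms, which, since the opcartesian morphisms of $\Grothop{F}$ are precisely the $\p{f,\alpha}$ with $\alpha$ invertible, says exactly that the underlying functor is a cartesian functor. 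Combining this with the restrictions already recorded in \corx\ref{isomorphismsweakkanextension} (pseudonatural $\leftrightarrow$ sigma, strict $\leftrightarrow$ cartesian-marked lax) yields the claimed $2$-equivalences $\groth{-}\:\mps{\A}{\CAT}\aequi \Fib'[t][n][\A]$ and $\groth{-}\:\m{\A}{\CAT}\aequi \Fib"[t][n][\A]$.

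The main obstacle is bookkeeping rather than any new idea: one must check the three isomorphisms above are genuinely $2$-natural in $F$ and $G$ and glue to a single functor that is inverse to the one induced by $\groth{-}$, and most delicately pin down the flavour correspondence for $\Fib'[t][n][\A]$, since being a cartesian functor is a condition on the underlying ordinary functor alone and one has to verify it is detected precisely by invertibility of the structure $2$-cells on the marked (cartesian) morphisms, using the characterization of opcartesian morphisms of $\Grothop{F}$ recalled above.
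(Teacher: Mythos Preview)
Your proposal is correct and reaches the same conclusion via a route that is closely related to, but packaged differently from, the paper's. Both proofs obtain essential surjectivity from \thex\ref{essentialimagegivenbytwosetopf} and use the first isomorphism of \corx\ref{isomorphismsweakkanextension} to identify $\HomC{\mlax{\A}{\CAT}}{F}{G}$ with $\HomC{\mlax{\Grothopdiag{F}}{\CAT}}{\Delta 1}{G\c\groth{F}}$. The difference lies in the second identification with $\HomC{\slice{\twoCAT}{\A}}{\groth{F}}{\groth{G}}$: the paper invokes directly the lax comma object universal property of $\Grothop{G}$ from \thex\ref{grothconstrislaxcomma}, using part~$(i)$ of \defx\ref{completeunivproplaxcomma} for objects and part~$(ii)$ with $\Gamma=\id{}$, $\Delta=\id{}$ for morphisms; you instead factor through the pullback description $\Grothop{G\c\groth{F}}\cong\Grothop{F}\times_{\A}\Grothop{G}$ and the observation that lax natural transformations $\Delta 1\alax{}H$ are exactly sections of $\groth{H}$. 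Your route is a touch more elementary, since it does not appeal to the refined $3$-categorical universal property of the lax comma object, while the paper's route is more in keeping with its programme of deriving everything from the lax comma characterisation. The handling of the restrictions is also essentially the same: the paper notes that whiskering with $\lambda^G$ reads off the second component of morphisms in $\Grothop{G}$, which is exactly your ``structure $2$-cell at $(f,\id{})$ equals second component of the image'' observation; your explicit justification that sigma corresponds to cartesian functors (via opcartesian morphisms of $\Grothop{F}$ being those $(f,\alpha)$ with $\alpha$ invertible) fills in a step the paper leaves implicit.
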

	\begin{proof}
		We already know by \thex\ref{essentialimagegivenbytwosetopf} that the essential image of $\groth{-}$ (in each of the three versions) is given by the split discrete $2$-opfibrations. So we are missing the $2$-fully faithfulness of the three $2$-functors.
		
		Let $F,G\:\A\to \CAT$ be $2$-functors. Then combining \corx\ref{isomorphismsweakkanextension} (weak Kan extension result) and \thex\ref{grothconstrislaxcomma} (the $2$-category of elements exhibits a lax comma object in $\twoCATlax$) we obtain the composite isomorphism of categories
		$$\HomC{\mlax{\A}{\CAT}}{F}{G}\iso \HomC{\mlax{\Grothopdiag{F}}{\CAT}}{\Delta 1}{G\c \groth{F}}\iso \HomC{\slice{\twoCAT}{\A}}{\Grothopdiag{F}}{\Grothopdiag{G}}$$
		where the first functor is given by pasting with the $2$-cell $\lambda^F$ that presents the lax comma object $\Grothop{F}$ and the second functor is the inverse of pasting with $\lambda^G$ on objects and producing the modification associated to the lax interchange rule (see \remx\ref{needof2catlax}) on morphisms. Indeed the second functor is surely a bijection on objects, and it is also a bijection on morphisms by part $(ii)$ of \defx\ref{completeunivproplaxcomma} with $\Gamma=\id{}$ and $\Delta=\id{}$. Since the composite functor precisely coincides with the functor on morphisms associated to $\groth{-}$ between $\A$ and $\C$ (see the proof of \prox\ref{twosetgrothcanonicallyextendstwofunctor}), this completes the proof of the first $2$-equivalence.
		
		The composite isomorphism above then restricts to the following two:
		$$\scalebox{0.95}{$\HomC{\mps{\A}{\CAT}}{F}{G}\iso \HomC{\msigma{\Grothopdiag{F}}{\CAT}}{\Delta 1}{G\c \groth{F}}\iso \HomC{\Fib'[t][n][\A]}{\Grothopdiag{F}}{\Grothopdiag{G}}$}$$
		$$\scalebox{0.975}{$\HomC{\m{\A}{\CAT}}{F}{G}\iso \HomC{\mlaxn{\Grothopdiag{F}}{\CAT}}{\Delta 1}{G\c \groth{F}}\iso \HomC{\Fib"[t][n][\A]}{\Grothopdiag{F}}{\Grothopdiag{G}}$}$$
		by part $(i)$ of \defx\ref{completeunivproplaxcomma}, since whiskering $\lambda^G$ on the left with a $2$-functor $\Grothop{F}\to \Grothop{G}$ looks at the second component of the morphisms in $\Grothop{G}$.
	\end{proof}

	\subsection*{Acknowledgements}
	
	I would like to thank Francesco Dagnino for the interesting discussions that brought to the idea of $2$-$\V$-enrichment. I thank my supervisor Nicola Gambino for the interesting discussions and his helpful advice on the subject of this paper. Finally, I thank the anonymous referees for their useful comments and suggestions.
	
	\bibliographystyle{abbrv}
	\bibliography{Bibliography2}

\end{document}